\DeclareMathAlphabet{\mathpzc}{OT1}{pzc}{m}{it}
\newtheorem{theorem}{Theorem}[section]
\newtheorem{Lemma}{Lemma}[section]
\newtheorem{proposition}{Proposition}[section]
\newtheorem{remark}{Remark}[section]
\theoremstyle{definition} \theoremstyle{remark}
\numberwithin{equation}{section}
\date{}
\begin{document}
	
	\markboth{J.W. He, S.L. Li, Y. Zhou}{ Maximal $L_p$-regularity for fractional   problem...}
	
	\date{}
	\baselineskip 0.22in
	\title{{\bf Maximal $L_p$-regularity for fractional problem driven by non-autonomous forms}}
	
	\author{Jia Wei He$^{1, }$\footnote{Corresponding author,  jwhe@gxu.edu.cn}, Shi Long Li$^1$, Yong Zhou$^{2,3}$ \\[1.7mm]
		\footnotesize  {1. School of Mathematics and Information Science, Guangxi University, Nanning 530004, China}\\ 
		\footnotesize  {2. Faculty of Mathematics and Computational Science, Xiangtan University, Hunan 411105,  China}\\
		\footnotesize  {$^3$Faculty of Information Technology, Macau University of Science and Technology, Macau 999078, China} 
	}

	\maketitle
	
	\begin{abstract}
	We investigate the maximal $L_p$-regularity in J.L. Lions' problem  involving a time-fractional derivative and a non-autonomous form $a(t;\cdot,\cdot)$ on a Hilbert space $H$. This problem says whether the maximal $L_p$-regularity in $H$ hold when $t \mapsto a(t ; u, v)$ is merely continuous or even merely measurable. We prove the maximal $L_p$-regularity results when the coefficients satisfy general Dini-type continuity conditions. In particular, we construct a counterexample to negatively answer this problem, indicating 
  the minimal  H\"{o}lder-scale regularity required for positive results.  \\[2mm]
		{\bf Keywords:} Maximal $L_p$-regularity; non-autonomous forms; fractional problems\\[2mm]
		{\bf 2020 MSC:} 26A33, 35B65, 45D05
	\end{abstract}

	\baselineskip 0.25in
	\section{Introduction}
	
	Let $\{-A(t)\}_{t\in[0,\tau]}$ be a family of  generators of analytic semigroups on some Banach space $X$ for some $\tau\in(0,\infty)$. It is well known that,  for the non-autonomous Cauchy problem
	\begin{equation}
		\label{P-1}\tag{P}
		\left\{
		\begin{aligned}
			u'(t)+A(t)u(t)&=f(t),~~ \text{ for~a.e. }~~  t\in (0,\tau),
			\\ u(0)&=u_0,
		\end{aligned}\right.
	\end{equation}
a classical J.L. Lions' problem asks whether the maximal regularity holds involving non-autonomous symmetric forms when the time dependence is merely continuous or even merely measurable.
	More generally, the maximal $L_p$-regularity problem with $p\in (1,\infty)$ concerns whether a unique solution exists for any given $f\in L_p(0, T; X)$ within the maximal regularity space:
	\begin{align*}
		MR_p(0,T;X):=\{u\in W^{1,p}(0,T;X):\ \ & u(t)\in D(A(t)),\ \ {\rm a.e.}\ t\in[0,T],\\ 
		&{\rm and}\ A(\cdot)u(\cdot)\in L_p(0,T;X)\}.
	\end{align*} 
	
	This problem has attracted increasing attention in recent years, with ongoing and notable developments shaping the field. In the autonomous setting, i.e., when $A(t) = A$ independent of time $t$, a well-known result by Simon \cite{Simon} stated that the maximal $L_p$-regularity holds in  Hilbert space if $-A$ generates a bounded analytic semigroup. 
	The situation has significant differences in Banach spaces. Kalton and Lancien \cite{Kalton} pointed out there exist generators of analytic semigroups that fail to possess the maximal $L_p$-regularity in Banach space. Considering $-A$ as a generator of bounded analytic semigroup on a UMD space, Weis \cite{Weis} established the maximal $L_p$-regularity iff the set $\{t (i t + A)^{-1} : t \in \mathbb{R} \backslash \{0\}\}$ is $R$-bounded. We refer \cite{Dore, Kunstmann, Hytonen} for more related works.
	In order to study the initial-boundary value problems of parabolic equations with time dependent boundary conditions on bounded domains, it is natural to derive non-autonomous equations whose operator domains vary with time.  Consider a non-autonomous sesquilinear form $a:[0, \tau] \times V \times V \rightarrow \mathbb{C}$ with uniform boundedness and coercivity, namely for some constants $M>0$ and $\gamma>0$, and for all $t \in[0, \tau]$, $u, v\in V$, the following conditions hold:
	\begin{equation}\label{A0}\tag{A}
		\begin{aligned}
			|a(t, u, v)| \leq M\|u\|_ V\|v\|_{V},\ \ \
			\operatorname{Re} a(t, u, u)\geq \gamma\|u\|_{V}^2,
		\end{aligned}
	\end{equation}
where $V$ is a Hilbert space over $\mathbb{R}$ or $\mathbb{C}$.  Let $H$ be another Hilbert space such that
the Gelfand triple $V \hookrightarrow H \hookrightarrow V^{\prime}$ holds with $V^{\prime}$ being the dual space of $V$. Consider operator $\mathcal{A}(t)\in L(V,V^{\prime})$ given by $\langle\mathcal{A}(t)u,v\rangle=a(t;u,v)$, and $A(t)$ is the part of $\mathcal{A}(t)$ on $H$, a foundational result by Lions \cite{Lions} states that if the form $a(t;u,v)$
 is strongly measurable with $t$, then $\{\mathcal{A}(t)\}_{0\leq t\leq \tau}$ has maximal $L_2$-regularity in $V^{\prime}$.  To establish maximal \( L_p \)-regularity in the dual space $V'$, a natural idea is to improve the additional regularity constraints of the operator family $\{\mathcal{A}(t)\}_{0\leq t\leq \tau}$. Arendt et al. \cite{Arendt} showed that a sufficient condition for this regularity is the (piecewise relative) continuity of the mapping \( t \mapsto \mathcal{A}(t)\) in $ L(V, V')$. However, Bechtel et al. \cite{Bechtel} later provided a critical counterexample demonstrating that maximal \( L_p \)-regularity in \( V' \) fails for \( p \neq 2 \) when \( t \mapsto \mathcal{A}(t)\) is merely strongly measurable. Their analysis underscores the sharpness of the continuity requirement for non-Hilbertian \( L_p \)-spaces. 
  
 	Hieber and Monniaux \cite{Hieber} proposed an effective approach to establish maximal $L_p$-regularity by utilizing a Volterra equation introduced by Acquistapace and Terreni \cite{Acquistapace}, i.e.,
	\begin{equation}\label{Eq-1}
		\begin{aligned}
			u(t)= & \int_0^t e^{-(t-s) \mathcal{A}(t)} f(s) \mathrm{d} s+e^{-t \mathcal{A}(t)} u_0  \\
			& +\int_0^t e^{-(t-s) \mathcal{A}(t)}(\mathcal{A}(t)-\mathcal{A}(s)) u(s) \mathrm{d} s.
		\end{aligned}
	\end{equation}
However, in some elliptic boundary value problems, maximal regularity in $H$ is essential for understanding the behavior of solutions because it enables the effective handling of boundary condition. Based on the framework of (\ref{Eq-1}), Ouhabaz and Spina \cite{Spina} proved maximal  $L_p$-regularity in $H$ when $u(0)=0$ assuming the $1/2+\epsilon$-H\"{o}lder continuity of non-autonomous forms in time, i.e. for $\epsilon>0$, 
	\begin{equation}
		|a(t, u, v)-a(s, u, v)| \leq c(t-s)^{\frac{1}{2}+\epsilon}\|u\|_V\|v\|_V.
	\end{equation}	
	An observation is that every H\"{o}lder function is the Dini type, Haak and Ouhabaz \cite{Haak} introduced a more general Dini type continuity condition
	$$
	|a(t, u, v)-a(s, u, v)| \leq  \omega(t-s)\|u\|_V\|v\|_V,
	$$
	 to establish the maximal $L_p$-regularity for $u_0=0$
     and further for $u_0\in {(H, \mathcal{D}(A(0)))_{1-\frac{1}{p}, p}}$ if $\omega$ satisfies a $p$-Dini condition. 
By constructing a counterexample,  Fackler \cite{Fackler2} showed that the exponent $1/2+\epsilon$ is optimal, and there exists a symmetric form with $C^{1/2}$-regularity such that the maximal regularity in $H$ fails. An alternative regularity condition associated with H\"{o}lder scale is the fractional Sobolev regularity, Dier and Zacher \cite{Dier} proved maximal $L_2$-regularity in $H$ if  $\mathcal{A}(t)$ belongs to fractional Sobolev space   $H^{1/2+\epsilon}(0,\tau;L(V,V'))$.
Achieving maximal regularity in the critical space $H^{1/2}$ is relatively difficult, in fact, if  $\mathcal{A}(t)\in W^{1/2,p}(0,\tau;L(V, V'))$ for $p<2$, it fails to imply maximal $L_2$-regularity, see e.g. \cite{Dier2014},  since the form does not satisfy Kato's square root condition. The generalization is also discussed in \cite{Arendt5},  and whether maximal regularity holds in the critical case has been raised as an open problem.  
By means of an integrability condition, Achache and Ouhabaz \cite{Achache2} solved this open problem  involving maximal $L_2$-regularity in $H$ for $\mathcal{A}(t)\in H^{1/2}(0,\tau;L(V,V^{'}))$, provided that $\mathcal{A}(t)$ satisfies the uniform Kato square root condition. This result is optimal since $C^{1/2}\subset H^\alpha$ for $\alpha<1/2$. Note that in UMD spaces, if $A(t)$ satisfies the Acquistapace-Terreni condition, Portal and \v{S}trkalj \cite{Portal} established maximal $L_p$-regularity  based on  the $R$-boundedness of set $\{t (i t+ A(t))^{-1}: t \in \mathbb{R} \backslash\{0\}\}$. Besides, Fackler \cite{Fackler} obtained maximal $L_p$-regularity using fractional Sobolev regularity. For more related works, see \cite{ Bardos, Arendt5} and the references therein.

	Consider the following fractional Cauchy problem driven by non-autonomous forms
	\begin{equation}
		\label{P-2}\tag{FP}
		\left\{
		\begin{aligned}
			\partial_t^{\alpha} u(t)+A(t)u(t)&=f(t), \text{ for~a.e. }~~  t\in (0,\tau),
			\\ u(0)&=u_0,
		\end{aligned}\right.
	\end{equation}
	where $\partial_t^\alpha$ is the standard Caputo fractional derivative of order $\alpha\in(0,1)$. It is natural to ask whether this fractional problem exists a unique solution for given $f\in L_p(0, T; X)$ within the maximal regularity space:
	\begin{align*}
		MR_p^{\alpha}(0,\tau;X):=\{\partial_t^{\alpha}u\in L_p(0,\tau;X):\ \ & u(t)\in D(A(t)),\ \ {\rm a.e.}\ t\in[0,\tau],\\ 
		&{\rm and}\ A(\cdot)u(\cdot)\in L_p(0,\tau;X)\}.
	\end{align*} 
 Fractional Cauchy problems have garnered increasing attention in recent years due to their remarkable ability to describe the initial state for various anomalous diffusion processes and memory effects in complex systems, which also serve as effective models established in heterogeneous media, viscoelastic materials, and turbulent flows, reflecting the non-local and history-dependent nature of the systems, see \cite{APP1,APP2,APP3} and the references therein. 
 
Maximal $L_p$-regularity  in UMD spaces for (FP) was established  by Pr\"{u}ss \cite{Jan} in one of the earlier studies, where autonomous operator $-A$ possesses a bounded imaginary power. 
 Using the $R$-boundedness, 
Bazhlekova \cite{Bazhlekova} also given a maximal $L_p$-regularity result in UMD spaces. Zacher \cite{Zacher1,Zacher2} generalized these results and characterized maximal $L_p$-regularity allowing nonzero initial data. In the non-autonomous framework, Zacher \cite{Zacher} proved maximal $L_2$-regularity in $V^{\prime}$ assuming the form is strongly measurable with respect to time $t$. By enhancing the assumption of time regularity for the non-autonomous form in  \cite{Zacher}, Achache \cite{Achache3} extended the result to maximal $L_p$-regularity in $V'$ under the assumption that the form is continuous with time $t$. Recently, Achache \cite{Achache} established maximal $L_p$-regularity in $H$ for $u_0\in Tr^{p}_{\alpha}$ under the condition (A) and the $\alpha/2+\epsilon$-H\"{o}lder regularity condition
	\begin{equation}\label{EQ-2}
		|a(t, u, v)-a(s, u, v)| \leq c(t-s)^{\frac{\alpha}{2}+\epsilon}\|u\|_V\|v\|_V,
	\end{equation}
	where $Tr^{p}_{\alpha}=H$ for $\alpha<\frac{1}{p}$ and $Tr^{p}_{\alpha}=\{0\}$ for $\alpha \geq \frac{1}{p}$.
		
		Inspired by the aforementioned works, two interesting problems naturally arises:
		
{\em  Does maximal $L_p$-regularity in $H$ hold for $({FP})$ when $t \rightarrow a(t ; u, v)$ is merely continuous or even merely measurable? If not, what is the minimal regularity condition required?
}

		One of the aims of the present work is to improve upon the results of Achache \cite{Achache} by establishing maximal $L_p$-regularity in $H$ under a more general Dini scale condition, allowing $u_0 \neq 0$ for $\alpha \geq \frac{1}{p}$. Another aim is to provide a counterexample demonstrating that the order of the Dini scale is optimal. We note that our approach to establishing maximal regularity is quite different from \cite{Achache}, where the author uses the Dore-Venni type theorem for non-commuting operators studied in \cite{Monniaux} to obtain the maximal regularity. Specifically, we present an interesting abstract Volterra equation of solution to the non-autonomous evolution problem (FP), namely
		$$
		\begin{aligned}
			u(t)=&\int_{0}^{t}\psi_{A(t)}(t-\tau)f(\tau)d\tau+\phi_{A(t)}(t)u_0\\
			&+\int_{0}^{t}\psi_{A(t)}(t-\tau)(A(t)-A(\tau))u(\tau)d\tau,
		\end{aligned}
		$$
		where $\psi_{A }$ and $\phi_{A }$ are operators generated by $A(t)$ using functional calculus, as discussed in Section \ref{sect:2} below.  This abstract Volterra equation enables us to transform the maximal $L_p$-regularity problem into $L_p$-boundedness of the integral operator with an operator-valued kernel.
		Our observation is that due to non-local nature of the fractional derivative in time, this abstract Volterra equation naturally exhibits many properties that differ from those of previous first-order equations, likely the semigroup property is not satisfied for  $\psi_{A } $ and $\phi_{A} $. This also means that the behaviors of the non-autonomous fractional problem (FP) are markedly different from that of the  problem (P). Therefore, the Dini scale and initial conditions required for maximal regularity are also different.
		
		In the following, we will describe our main conclusions in more detail.
		\begin{theorem}\label{T1}
			Assume that the non-autonomous forms $\{a(t , \cdot, \cdot)\}_{0 \leq t \leq \tau}$ satisfy the hypothesis (\ref{A0}) and the regularity condition
			$$
			|a(t, u, v)-a(s, u, v)| \leq \omega(|t-s|)\|u\|_V\|v\|_V
			$$
			where $\omega:[0, \tau] \rightarrow[0, \infty)$ is a non-decreasing continuous function such that
			\begin{equation}\label{A1}
				\int_0^\tau \frac{\omega(t)}{ t^{1+\frac{\alpha}{2}}  } d  t<\infty.
			\end{equation}
			Then the problem (FP) with $u_0\in Tr^{p}_{\alpha}$ has maximal $L_p$-regularity in $H$ for all $p \in(1, \infty)$. When $\alpha>\frac{1}{p}$, If $\omega$ additionally satisfies the $(\alpha, p)$-Dini condition, i.e.,
			\begin{equation}\label{A2}
				\int_0^\tau\left(\frac{\omega(t)}{t^{\alpha}}\right)^p \mathrm{~d} t<\infty,
			\end{equation}
			then (FP) has maximal $L_p$-regularity for all initial values $u_0 \in(H, \mathcal{D}(A(0)))_{1-\frac{1}{\alpha p}, p}$. Moreover there exists a positive constant $C$ such that
			\begin{equation} \label{Est}
				\begin{aligned}
					\|u\|_{L_p(0, \tau ; H)}&+\|\partial_t^{\alpha}u\|_{L_p(0, \tau ; H)}+\|A(\cdot) u \|_{L_p(0, \tau ; H)}\\  
					\leq &C(\|f\|_{L_p(0, \tau ; H)}+\|u_0\|_{(H, \mathcal{D}(A(0)))_{1-\frac{1}{\alpha p}, p}}).
			\end{aligned}\end{equation}
			In particular, problem (FP) has maximal $L_p$-regularity for all $u_0 \in(H, \mathcal{D}(A(0)))_{\epsilon, 2}$ with any $\epsilon>0$ when $\alpha=\frac{1}{p}$.
		\end{theorem}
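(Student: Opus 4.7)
The plan is to realize the solution $u$ of (FP) through the abstract Volterra representation displayed above, and then to establish maximal $L_p$-regularity by treating the third (commutator) term as a small perturbation on $L_p(0,\tau;H)$. I will use throughout the mapping properties of $\psi_{A(t)}$ and $\phi_{A(t)}$ built from the Mittag--Leffler functional calculus in Section~\ref{sect:2}, in particular the smoothing bound $\|A^{\gamma}\psi_{A}(t)\|_{L(H)}\lesssim t^{\alpha-1-\alpha\gamma}$ inherited from the sectoriality ensured by (\ref{A0}), together with Kato's square root equality $\mathcal{D}(A(s)^{1/2})=V$.

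The first step is to handle the autonomous ingredients. Freezing time at a point $t_0$, the known maximal $L_p$-regularity theory for autonomous fractional Cauchy problems (Pr\"uss, Bazhlekova, Zacher) applied to $A(t_0)$ yields an $L_p(0,\tau;H)$ bound for $A(t_0)\int_0^{\cdot}\psi_{A(t_0)}(\cdot-s)f(s)\,\mathrm{d} s$ in terms of $\|f\|_{L_p}$ and for $A(t_0)\phi_{A(t_0)}(\cdot)u_0$ in terms of the appropriate trace norm of $u_0$. The transition from the frozen time $t_0$ to the running variable $t$ introduces extra differences $A(t)-A(t_0)$ that will be absorbed by the same commutator mechanism used below.

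The central step is to show that, setting $v(t):=A(t)u(t)$, the Volterra operator
\begin{equation*}
Kv(t):=\int_{0}^{t} A(t)\psi_{A(t)}(t-s)(A(t)-A(s))A(s)^{-1}v(s)\,\mathrm{d} s
\end{equation*}
is bounded on $L_p(0,\tau;H)$ with norm as small as one wishes for small $\tau$. Here $\|\mathcal{A}(t)-\mathcal{A}(s)\|_{L(V,V')}\le\omega(|t-s|)$, and Kato's square root identity lets one factorise $(A(t)-A(s))A(s)^{-1}=A(t)^{1/2}\cdot R(t,s)$ with $\|R(t,s)\|_{L(H)}\le C\omega(t-s)/(t-s)^{\alpha/2}$ after absorbing an $A(s)^{-1/2}$. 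Combining this with the smoothing bound $\|A^{3/2}\psi_{A}(t)\|_{L(H)}\lesssim t^{-1-\alpha/2}$ yields the pointwise kernel estimate
\begin{equation*}
\|A(t)\psi_{A(t)}(t-s)(A(t)-A(s))A(s)^{-1}\|_{L(H)}\le C\,\frac{\omega(t-s)}{(t-s)^{1+\alpha/2}}.
\end{equation*}
Condition (\ref{A1}) makes this kernel integrable on $(0,\tau)$, so Young/Schur yields the $L_p$-boundedness of $K$, and on a sufficiently short initial interval its norm is $<1$. Hence $I-K$ is invertible, first locally then globally by iteration on successive intervals, and inverting $(I-K)v=F$ with $F$ equal to the autonomous contributions above gives the $\|A(\cdot)u\|_{L_p}$ part of (\ref{Est}); the bound on $\|\partial_t^{\alpha} u\|_{L_p}$ comes from the equation itself, and the $L_p(H)$-bound on $u$ from a fractional Hardy-type inequality.

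The refinements on $u_0$ are handled separately. For $\alpha<1/p$ the trace space equals $H$; for $\alpha\ge 1/p$ with $u_0\in(H,\mathcal{D}(A(0)))_{1-1/(\alpha p),p}$ one writes $\phi_{A(t)}(t)u_0=\phi_{A(0)}(t)u_0+[\phi_{A(t)}(t)-\phi_{A(0)}(t)]u_0$, where the first summand is controlled by autonomous trace theory and the difference, expanded as an integral involving $(A(\sigma)-A(0))$, is shown to be $L_p$-integrable precisely under the stronger $(\alpha,p)$-Dini assumption (\ref{A2}); the critical case $\alpha=1/p$ follows by interpolation with any $\epsilon>0$. The main obstacle throughout is the kernel estimate for $K$: because $\psi_{A(t)}$ does not satisfy a semigroup law, the Hieber--Monniaux and Ouhabaz--Spina manipulations valid for $e^{-tA}$ cannot be transferred verbatim, and the precise decay $t^{-1-\alpha/2}$ must be extracted via the functional-calculus representation of $\psi_{A}$. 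This exponent is exactly what forces the $1+\alpha/2$ weight in (\ref{A1}) and, as the subsequent counterexample in the paper demonstrates, cannot be weakened.
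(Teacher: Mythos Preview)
Your overall scheme---the Volterra representation, the commutator operator $K$ with kernel bounded by $C\omega(t-s)/(t-s)^{1+\alpha/2}$, the splitting $\phi_{A(t)}(t)u_0=\phi_{A(0)}(t)u_0+[\phi_{A(t)}-\phi_{A(0)}](t)u_0$ for the initial data---matches the paper's architecture, and the key kernel exponent $1+\alpha/2$ is exactly right. However two steps in your execution do not go through as written, and the paper handles them differently.

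\textbf{The iteration step.} Making $K$ contractive on a short interval and then iterating on successive intervals works for first-order problems because one can restart at $t=\tau_1$ with initial value $u(\tau_1)$. For the Caputo derivative this fails: the fractional derivative on $[\tau_1,2\tau_1]$ still involves the whole history on $[0,\tau_1]$ through the convolution kernel $k$, so the problem does not restart cleanly (the very non-locality you mention elsewhere). The paper avoids iteration altogether by multiplying by $e^{-\delta t}$; the transformed commutator $Q$ then has kernel $e^{-\delta(t-s)}\omega(t-s)/(t-s)^{1+\alpha/2}$, whose $L_1$-norm on the \emph{whole} interval can be made $<1$ by taking $\delta$ large (Lemma~\ref{4.2}). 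This gives invertibility of $I-Q$ on $L_p(0,\tau;H)$ in one shot.

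\textbf{Boundedness of $L$.} Freezing at a single $t_0$ and absorbing the remainder ``by the same commutator mechanism'' does not work: the difference kernel $[A(t)\psi_{A(t)}-A(t_0)\psi_{A(t_0)}](t-s)$ is only bounded by $C\omega(|t-t_0|)/(t-s)$, which is neither integrable in $s$ nor globally small in $t$, so it cannot be treated as a perturbation of the autonomous operator. The paper instead realizes $L$ (after the $\delta$-shift) as a pseudo-differential operator with operator-valued symbol $\sigma(t,\xi)=\mathcal{A}(t)((i\xi+\delta)^\alpha+\mathcal{A}(t))^{-1}$; $L_2$-boundedness follows from a Coifman--Meyer type criterion (Lemma~\ref{4.4} and Lemma~\ref{4.5}), and the passage to all $p\in(1,\infty)$ is obtained by verifying H\"ormander's integral condition on the kernel and applying Calder\'on--Zygmund theory (Lemma~\ref{Lemma4.6}). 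Autonomous maximal regularity is never invoked.

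A minor point: you rely on Kato's square root identity $\mathcal{D}(A(t)^{1/2})=V$ to factor the commutator, but this is neither assumed in (\ref{A0}) nor needed. The paper gets the $|\lambda|^{-\alpha/2}$ gain directly from the form, via
\[
\bigl|\bigl((\lambda^\alpha+\mathcal{A}(t))^{-1}(\mathcal{A}(t)-\mathcal{A}(s))\mathcal{A}(s)^{-1}u,\,v\bigr)_H\bigr|\le C\,\omega(t-s)\,\|\mathcal{A}(s)^{-1}u\|_V\,\|(\overline{\lambda^\alpha}+\mathcal{A}^*(t))^{-1}v\|_V,
\]
together with $\|(\overline{\lambda^\alpha}+\mathcal{A}^*(t))^{-1}\|_{L(H,V)}\le C|\lambda|^{-\alpha/2}$, which is a consequence of coercivity alone (Proposition~\ref{proposition2.1}(iv)).
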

		
		\begin{remark}
			The first assertion of this theorem extends the result of Achache \cite[Theorem 3.10]{Achache}, since the condition (\ref{EQ-2}) is a special case of our more general condition (\ref{A1}). The second assertion of this theorem provides an maximal $L_p$-regularity result that allows nonzero initial data $u\neq0$ when $\alpha\geq\frac{1}{p}$.
		\end{remark}
		
		\begin{remark}
			We compare our results for $\alpha$-order fractional Cauchy problem with the results of Haak and Ouhabaz \cite{Haak} for first-order Cauchy problem. Specifically, if
			$$
			\int_0^\tau \frac{\omega(t)}{t^{3 / 2}} \mathrm{~d} t<\infty
			$$
			then the Cauchy problem (P) with $u(0)=0$ has maximal $L_p$-regularity in $H$ for all $p \in$ $(1, \infty)$. If $\omega$ additionally satisfies the condition
			$$
			\int_0^\tau\left(\frac{\omega(t)}{t}\right)^p \mathrm{~d} t<\infty
			$$
			then $(P)$ has maximal $L_p$-regularity for all initial value $u_0 \in(H, \mathcal{D}(A(0)))_{1-1 / p, p}$. We observe that the conditions required for the initial value $u_0$ and the Dini scale  $\omega(t)$ of the form to achieve maximal $L_p$-regularity in the  problem (FP) are weaker than those of the classical one.
		\end{remark}

		Now we provide a sketch of the proof. According to the definition of the maximal regularity space, a solution to the  problem (FP) possesses maximal $L_p$-regularity in $H$ if and only if $A(t) u(t) \in L_p(0, \tau ; H)$. To address this, we introduce transformations for a sufficiently large $\delta>0$: 
		$u^{\delta}(t)=u(t)e^{-\delta t}$, $\phi^{\delta}_{\mathcal{A}(t)}(s)=\phi_{\mathcal{A}(t)}(s)e^{-\delta s}$, $\psi^{\delta}_{\mathcal{A}(t)}(s)=\psi_{\mathcal{A}(t)}(s)e^{-\delta s}$, $f^{\delta}(t)=f(t)e^{-\delta t}$, the abstract Volterra equation becomes
		\begin{equation}\label{EQ4.1}
			\begin{aligned}
				u^{\delta}(t)=&\phi^{\delta}_{\mathcal{A}(t)}(t)u_0+\int_{0}^{t}\psi^{\delta}_{\mathcal{A}(t)}(t-s)f^{\delta}(s)ds
				\\&+\int_{0}^{t}\psi^{\delta}_{\mathcal{A}(t)}(t-s)(\mathcal{A}(t)-\mathcal{A}(s))u^{\delta}(s)ds.
			\end{aligned}
		\end{equation}
		Clearly it suffices to prove $\mathcal{A}(t)u^{\delta}(t)\in L_p(0,\tau; H)$. We can express $\mathcal A(\cdot)u^{\delta}$ as an operator sum $A(\cdot) u^\delta=R u_0+L f+Q\left(A(\cdot) u^\delta\right)$
		where
		$$
		\begin{gathered}
			(Q g)(t):=\int_0^t \mathcal{A}(t) \psi_{\mathcal{A}(t)}^\delta(t-s)(\mathcal{A}(t)-\mathcal{A}(s)) \mathcal{A}(s)^{-1} g(s) d s, \\
			(L f)(t)=\mathcal{A}(t) \int_0^t \psi_{\mathcal{A}(t)}^\delta(t-s) f(s) d s,~  \text{ and }~ \left(R u_0\right)(t)=\mathcal{A}(t) \phi_{\mathcal{A}(t)}^\delta(t) u_0.
		\end{gathered}
		$$
		Next, we establish the $L_p$-boundedness and invertibility of the operator $I-Q$, where the transformation (\ref{EQ4.1}) and condition (\ref{A1}) play crucial roles, which allow us to transform the $A(\cdot) u^\delta$ into
		$$
		A(\cdot) u^\delta=(I-Q)^{-1}\left(R u_0+L f^\delta\right).
		$$
		Consequently, the proof reduces to showing 
		$$
		Ru_0,Lf\in L_p(0,\tau;H).
		$$
		This is achieved by applying the pseudo-differential operator theorem. We transform the operator $L$ into a pseudo-differential operator whose symbol is operator-valued. Then we establish $L_2$-boundedness for this operator, which allows us to extend the result to $L_p$-boundedness by verifying a H\"{o}rmander integral condition for the integral operator $L$. The proof that $Ru_0\in L_p(0,\tau;H)$ follows from a classical operator estimate where condition ($\ref{A2}$) is used.

		The second main result of this paper is as follows. We construct a counterexample with an ${\alpha}/{2}$-H\"{o}lder continuous form,  providing an answer to the problem stated above: maximal $L_p$-regularity in $H$ fails for $(\mathrm{FP})$ when $t \rightarrow a(t ; u, v)$ is merely continuous,  and the order of our regularity condition (\ref{A1}) is optimal.
		\begin{theorem}\label{T2}
			Given any $\tau \in(0, \infty)$, there is a Gelfand triple $V \hookrightarrow H \hookrightarrow V^{\prime}$, if non-autonomous form $a(\cdot;\cdot,\cdot):[0, T] \times V \times V \rightarrow \mathbb{C}$ satisfies (\ref{A0})  and 
			$$
			|a(t, u, v)-a(s, u, v)| \leq K|t-s|^{\frac{\alpha}{2}}\|u\|_V\|v\|_V \quad \text { for all } t, s \in[0, T], u, v \in V,
			$$
		for some $K \geq 0$, then the  problem (FP) fails having maximal $L_p$-regularity in $H$ for the initial value $u_0= 0$ and some $f$ in $L_2(0, T ; H)$.
		\end{theorem}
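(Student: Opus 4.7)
The overall strategy is to adapt Fackler's counterexample \cite{Fackler2} from the first-order setting (where the critical H\"older exponent is $1/2$) to the $\alpha$-fractional setting (where the critical H\"older exponent should be $\alpha/2$). The heuristic comes directly from Theorem~\ref{T1}: for $\omega(t)=t^\beta$ the Dini integral $\int_0^\tau\omega(t)/t^{1+\alpha/2}\,dt$ is finite precisely when $\beta>\alpha/2$, so the H\"older class $C^{\alpha/2}$ sits exactly on the boundary. The plan is therefore to construct a form that is $\alpha/2$-H\"older in $t$ and a right-hand side $f\in L_2(0,T;H)$ for which the solution of (FP) with $u_0=0$ fails to satisfy $A(\cdot)u\in L_2(0,T;H)$.

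First I would fix a block-diagonal model: take $H=\bigoplus_{k\ge 1}H_k$ with finite-dimensional blocks, carrying a self-adjoint positive operator $A_0$ whose spectrum accumulates at dyadic scales $\lambda_k\simeq 2^{k/\alpha}$, and set $V=\mathcal{D}(A_0^{1/2})$ to obtain the Gelfand triple $V\hookrightarrow H\hookrightarrow V'$. I would then produce a path of unitaries $t\mapsto U(t)\in\mathcal{L}(H)$ that leaves $V$ invariant and acts on each $H_k$ as a rotation modulated at the time-scale $2^{-k/\alpha}$, calibrated so that
\[
\|U(t)-U(s)\|_{\mathcal{L}(V)}\le K|t-s|^{\alpha/2},
\]
with the exponent saturated across infinitely many scales. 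Setting $a(t;u,v):=\langle A_0 U(t)^*u,\,U(t)^*v\rangle$ yields a sesquilinear form obeying (\ref{A0}) together with the H\"older regularity of the statement, and the corresponding part operator is $A(t)=U(t)A_0 U(t)^*$ with $\mathcal{D}(A(t))=U(t)\mathcal{D}(A_0)$.

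Second, I would build $f=\sum_k c_k f_k$, with $f_k$ supported in a dyadic time-interval $I_k$ of length $\simeq 2^{-k/\alpha}$ and spectrally concentrated in $H_k$. Plugging $u_0=0$ into the Volterra representation (\ref{EQ4.1}), the autonomous term $\int_0^t\psi^\delta_{\mathcal{A}(t)}(t-s)f(s)\,ds$ is controlled in $L_2$ by $\sum_k c_k^2$, but the commutator-like perturbation $\int_0^t\psi^\delta_{\mathcal{A}(t)}(t-s)(\mathcal{A}(t)-\mathcal{A}(s))u(s)\,ds$ produces a resonant contribution whose $L_2$-norm on $I_k$ is of the same order as the main piece: the Mittag-Leffler smoothing of $\psi_{\mathcal{A}(t)}(t-s)$ at spectral scale $\lambda_k$ decays like $(1+\lambda_k(t-s)^\alpha)^{-1}$, which exactly cancels the H\"older factor $(t-s)^{\alpha/2}$ after squaring and integrating against $\lambda_k^2$ over $I_k$. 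A careful choice of weights $(c_k)\in\ell^2$ with a slowly divergent $\sum_k c_k^2\,h(k)=\infty$, where $h(k)$ counts how often the $k$-th block resonates with coarser scales, then forces $\|A(\cdot)u\|_{L_2(0,T;H)}=\infty$, precluding maximal $L_2$-regularity.

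The main analytic obstacle is the absence of the semigroup identity for $\psi_{\mathcal{A}}$ and $\phi_{\mathcal{A}}$: Fackler's first-order argument repeatedly exploits $e^{-tA}e^{-sA}=e^{-(t+s)A}$, whereas in the fractional case one needs sharp two-sided asymptotics for $\|A\psi_{A}(t)\|$ and for the increments $\|A(\psi_{A}(t)-\psi_{A}(s))\|$ through the Hankel-contour representation of $E_\alpha$, plus careful bookkeeping of constants across dyadic scales. A secondary difficulty is engineering the unitary path $U(t)$ so that the H\"older modulus $|t-s|^{\alpha/2}$ is saturated on infinitely many scales without accidentally producing a stronger Dini modulus satisfying (\ref{A1}); verifying this point is what pins the optimality of the critical exponent $\alpha/2$ appearing in Theorem~\ref{T1}.
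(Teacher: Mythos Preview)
Your proposal takes a genuinely different route from the paper, and the difference is structural, not cosmetic.

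The paper follows Fackler's template directly but in the opposite direction from yours: it first \emph{prescribes} the bad solution $u(t,x)=c(x)\,E_{\alpha,1}(i^{\alpha}t^{\alpha}\varphi(x))$ on the concrete triple $H=L_2([0,1])$, $V=L_2([0,1];w\,d\lambda)$ with $w(x)=x^{-3/2}$, $\varphi(x)=x^{-3/2}$, $c(x)=x$, chosen so that $u\in L_\infty(0,\tau;V)$ and $\partial_t^{\alpha}u\in L_\infty(0,\tau;V')\setminus L_2(0,\tau;H)$. Setting $f(t)=i^{\alpha-1}u(t)\in L_2(0,\tau;H)$, the equation (FP) itself dictates the form on the one-dimensional slice $\operatorname{span}\{u(t)\}\times V$, and an elementary numerical-range extension (three short lemmas) pushes it to all of $V\times V$ while preserving (\ref{A0}). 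The $\alpha/2$-H\"older regularity is then read off from explicit Mittag--Leffler asymptotics. No Volterra analysis, no solution operator, no resonance bookkeeping is needed: one simply verifies properties of a function written down in closed form, and a cutoff handles $u_0=0$.

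Your plan fixes the form first (unitary conjugation on a dyadic block space) and then tries to produce $f$ so that the \emph{solution} blows up. This forces you to analyze the solution through the Volterra representation, and the obstacle you yourself name --- the absence of the semigroup law for $\psi_{\mathcal A}$ --- is exactly where the argument is incomplete. The line ``the Mittag--Leffler smoothing $\ldots$ exactly cancels the H\"older factor after squaring'' is a heuristic scaling match, not an estimate; turning it into a lower bound on $\|A(\cdot)u\|_{L_2}$ requires two-sided control of the solution across interacting scales, which is substantially harder than anything in the paper's proof. A minor further point: your form $a(t;u,v)=\langle A_0U(t)^{*}u,U(t)^{*}v\rangle$ with self-adjoint $A_0$ is symmetric, whereas the paper's extension procedure deliberately sacrifices symmetry (the section is titled ``A non-symmetric counterexample''); this is not fatal, but it means even a successful execution of your plan would establish a slightly different statement.

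In short: the paper's ``solution-first'' construction sidesteps every analytic difficulty you list, at the price of a less canonical-looking form.
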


		\begin{remark}
			Inspired by Fackler's method \cite{Fackler2} of constructing counterexamples for  problem (P), we employ the following strategy to develop our counterexample. We first consider the function $u(t,x)=c(x)E_{\alpha,1}(i^{\alpha}t^{\alpha}\varphi(x))$ such that $u\notin MR^{\alpha}_2 (0,T;H)$, where $E_{\alpha,1}(z)$ is the Mittag-Leffler function. This function allows us to identify a non-autonomous form that satisfies the uniform boundedness and coercivity conditions, and also satisfies equation (FP). Furthermore, it ensures that the form possesses $ {\alpha}/{2}$-H\"older continuity. This leads to our counterexample. 
		\end{remark}

		The paper is organized as follows.  In  Section \ref{sect:2}, we introduce the necessary notations and fundamental properties of the operators $\psi_{A } $ and $\phi_{A} $ generated by $A(\cdot)$ and give the representation formula for the solutions. In Section \ref{sect:4}, we first prove several key lemmas and then establish the maximal $L_p$-regularity of solutions.  In Section \ref{sect:5}, we construct a counterexample to demonstrate the optimality of the H\"older condition.

		\section{Preliminary} \label{sect:2}
		In this section, we first introduce some notations that will be useful throughout this paper. Let $\Gamma(\cdot)$ be the Gamma function and define function $k(t):= t^{-\alpha}/{\Gamma(1-\alpha)}$ for $\alpha\in(0,1)$, $t>0$. The Caputo fractional derivative is defined as 
		$$
		\partial_t^{\alpha}u(t)=\frac{d}{dt}k*(u(\cdot)-u(0))(t)=\frac{d}{dt}\int_0^t k(t-s)(u(s)-u(0))ds,
		$$
		for $u\in L_1(\mathbb{R}^{+})$, where $*$ denotes the convolution. If $u\in H^{1}(\mathbb R^+)$, it can also be expressed as
		$$
		\partial_t^{\alpha}u(t)=k*u'(t).
		$$
		
		Let $\vartheta\in(0,\pi)$. Denote by $\Sigma_\vartheta$ the sectorial domain 
		$$
		\Sigma_\vartheta=\{ z \in \mathbb{C}\setminus\{0\} :~~|\text{arg}(z)| < \vartheta \}.
		$$
		
		Denote  by $(\cdot,\cdot)_Y$ the the scalar product of $Y\in\{V,V^{\prime}, H\}$ and $\langle\cdot,\cdot\rangle_{V,V^{\prime}}$ the dual pairing $V\times V^{\prime}$. In the following, we always assume that $C>0$ is a generic constant.
		
		\begin{proposition}\label{proposition2.1}
		Operators $A(t)$ and $\mathcal{A}(t)$ associated with non-autonomous form satisfying assumption (\ref{A0}) are the sectorial operators, and for some $\theta\in(\frac{\pi}{2},\pi)$, the  properties are true:
			\begin{enumerate}[label=(\roman*)]
				\item [{\rm(i)}]	$
				\rho(-\mathcal{A}(t))\supset \Sigma_{\theta} \cup\{0\}. $
				\item [{\rm(ii)}] $
				\|(\lambda+A(t))^{-1}\|_{L(H)}\leq \frac{C}{|\lambda|}
				$,   for  $\lambda\in\Sigma_{\theta}$.
				\item [{\rm(iii)}]$
				\|(\lambda+\mathcal{A}(t))^{-1}\|_{L(V^{\prime})}\leq \frac{C}{|\lambda|},
				$  for  $\lambda\in\Sigma_{\theta}$.
				\item [{\rm(iv)}]
				$
				\|(\lambda+A(t))^{-1}\|_{L(H,V)}\leq \frac{C}{|\lambda|^{1/2}},
				$\text{ for } $\lambda\in\Sigma_{\theta}$.
			\end{enumerate}
		\end{proposition}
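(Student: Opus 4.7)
The plan is to establish all four properties via the Lax--Milgram theorem applied to a suitably rotated shift of the form $a(t;\cdot,\cdot)$. The crucial preliminary observation is that under hypothesis (\ref{A0}), the numerical range $\{a(t;u,u)/\|u\|_V^2 : u \in V \setminus \{0\}\}$ lies in the bounded set $\{z \in \mathbb{C} : \operatorname{Re} z \geq \gamma,\ |z| \leq M\}$, and is therefore contained in a closed sector $\overline{\Sigma_\omega}$ about the positive real axis with half-opening $\omega < \pi/2$ depending only on the ratio $M/\gamma$. Since $M$ and $\gamma$ are uniform in $t$, so is $\omega$. I then fix any $\theta \in (\pi/2,\ \pi-\omega)$, which is nonempty precisely because $\omega < \pi/2$. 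The case $\lambda = 0$ in (i) is immediate from coercivity alone, which makes $\mathcal{A}(t)\colon V \to V'$ a topological isomorphism.

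For (i) and (iii), I fix $\lambda \in \Sigma_\theta$ with argument $\varphi$, $|\varphi| < \theta$, and select a rotation angle $\beta = \beta(\varphi)$ satisfying the two simultaneous constraints $|\varphi - \beta| < \pi/2$ and $|\beta| < \pi/2 - \omega$; both can be fulfilled with a uniform gap because $\theta + \omega < \pi$ (e.g.\ by a piecewise linear choice in $\varphi$). Then for every $u \in V$,
\begin{equation*}
\operatorname{Re}\bigl(e^{-i\beta}\bigl[\lambda(u,u)_H + a(t;u,u)\bigr]\bigr) = \cos(\varphi - \beta)\,|\lambda|\,\|u\|_H^2 + \operatorname{Re}\bigl(e^{-i\beta} a(t;u,u)\bigr) \geq \gamma_0 \|u\|_V^2
\end{equation*}
for a uniform $\gamma_0 > 0$, so the sesquilinear form $b_\lambda(u,v) := \lambda (u,v)_H + a(t;u,v)$ is bounded and coercive on $V \times V$. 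Lax--Milgram then produces the inverse $(\lambda + \mathcal{A}(t))^{-1} \colon V' \to V$, giving (i). Testing $(\lambda + \mathcal{A}(t))u = f$ against $u$ and using $|\langle f, u\rangle_{V',V}| \leq \|f\|_{V'} \|u\|_V$ yields both $\|u\|_V \leq C\|f\|_{V'}$ and $|\lambda|\,\|u\|_H^2 \leq C\|f\|_{V'}\|u\|_V$, from which the $V'$-resolvent estimate (iii) follows after invoking the embedding $V \hookrightarrow H \hookrightarrow V'$.

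For (ii) and (iv), I take $f \in H$ and set $u := (\lambda + A(t))^{-1} f \in \mathcal{D}(A(t)) \subset V$. The weak identity $\lambda (u,v)_H + a(t;u,v) = (f,v)_H$ at $v = u$, multiplied by $e^{-i\beta}$ and passed to real parts, produces simultaneously
\begin{equation*}
\cos(\varphi - \beta)\,|\lambda|\,\|u\|_H^2 \leq \|f\|_H \|u\|_H \quad \text{and} \quad \gamma_0 \|u\|_V^2 \leq \|f\|_H \|u\|_H.
\end{equation*}
The first delivers (ii); substituting the resulting bound $\|u\|_H \leq C \|f\|_H / |\lambda|$ into the second gives (iv). The main bookkeeping obstacle in the entire argument is the uniform selection of $\beta(\varphi)$ on the full sector and the extraction of a coercivity constant $\gamma_0$ that is independent of both $\varphi$ and $t$; once these are in place, the remaining estimates are classical. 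Because $\omega$ depends only on $M$ and $\gamma$, every resolvent constant produced above is uniform in $t \in [0,\tau]$, which is essential for the non-autonomous analysis that follows.
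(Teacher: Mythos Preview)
Your approach is the standard numerical-range/rotation argument that the cited references (Haak--Ouhabaz, Proposition~6; Ouhabaz, Theorem~1.54) employ, so in spirit you are reproducing exactly the proof the paper defers to. The paper itself omits all details, so there is nothing substantive to contrast.

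There is, however, one small gap in your derivation of (iii). From the two inequalities $\|u\|_V \leq C\|f\|_{V'}$ and $|\lambda|\,\|u\|_H^2 \leq C\|f\|_{V'}\|u\|_V$ together with the embedding $H \hookrightarrow V'$, you obtain only $\|u\|_{V'} \leq C|\lambda|^{-1/2}\|f\|_{V'}$, not the claimed $|\lambda|^{-1}$ decay. The clean way to close this is to use the equation itself rather than the embedding: writing
\[
(\lambda + \mathcal{A}(t))^{-1} = \lambda^{-1}\bigl(I - \mathcal{A}(t)(\lambda + \mathcal{A}(t))^{-1}\bigr)
\]
as operators on $V'$, and combining the uniform bound $\|(\lambda + \mathcal{A}(t))^{-1}\|_{L(V',V)} \leq \gamma_0^{-1}$ (which you already have from coercivity) with $\|\mathcal{A}(t)\|_{L(V,V')} \leq M$, gives $\|(\lambda + \mathcal{A}(t))^{-1}\|_{L(V')} \leq |\lambda|^{-1}(1 + M\gamma_0^{-1})$. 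With this correction the argument is complete and uniform in $t$.
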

		\begin{proof}
			The proofs are standard and can be found, for example, in \cite[Proposition 6]{Haak} or \cite[Theorem 1.54]{Ouhabaz}, we omit them here.
		\end{proof}
		
		It is worth mentioning that the  $\|(\lambda+\mathcal{A}(t))^{-1}\|_{L(V^{\prime})}\leq \frac{C}{1+|\lambda|}$ for $\lambda\in\Sigma_{\theta} \cup\{0\}$ was showed in \cite[Proposition 2.1]{Arendt1} for example, which implies the boundedness of $\mathcal{A}(t)^{-1}$. For convenience, we use this expression to establish our theorem.
		
	Let $\gamma\in(0,1)$, for a sectorial operator $A$ in Hilbert space $H$, we next introduce the interpolation space 
		$$
		D_{A}(\gamma, p):=\left\{x \in H:~~ [x]_{D_{A}(\gamma, p)}<\infty\right\} ,
		$$
		where
		$$
		[x]_{D_{A}(\gamma, p)}:=\left\{\int_{0}^{\infty}\left(t^\gamma\left\|A(t I+A^{-1} x\right\|_H\right)^p \frac{d t}{t}\right\}^{\frac{1}{p}},
		$$
  endowed with the norm $\|x\|_{D_{A}(\gamma, p)}:=\|x\|_H+[x]_{D_{A}(\gamma, p)}$. It is well known that these spaces are equivalent (up to the equivalence of norms) with the interpolation spaces $(H, D(A))_{\gamma, p}$ between $H$ and $D(A)$, see e.g. \cite[Proposition 2.2.6]{Lunardi}. 
		
		Since $A$ is a sectorial operator, for $0<v<1$, $A^{-v}$ is well-defined in $L(H)$ and is given by
		$$
		A^{-v}:=\frac{\sin \pi v}{\pi} \int_0^\infty s^{-v}(s I+A)^{-1} d s.
		$$
		Let the fractional power operator $A^{v}$ be defined as the inverse of  $A^{-v}$. The domain of the fractional power operator, denoted by $D(A^v)$, is equipped with the graph norm $\|x\|_{D(A^v)}=\|x\|_H+\|A^vx\|$.  Furthermore, it is noted that $D(A^{v})$ is equivalent to the interpolation space $(H, D(A))_{v,2}$ with the equivalent norms. For details, see \cite[Chapter 2]{Yagi} for example. 
		
		Let us define two families of operators $\{\phi_{\mathcal{A}(t)}(s)\}_{t\in[0,\tau],s>0}$ and $\{\psi_{\mathcal{A}(t)}(s)\}_{t\in[0,\tau],s>0}$ using functional calculus, namely
		$$
		\begin{aligned}
			& \phi_{\mathcal{A}(t)}(s)=\frac{1}{2 \pi i} \int_{\Gamma} \lambda^{\alpha-1}e^{\lambda s}(\lambda^{\alpha} + \mathcal{A}(t))^{-1} d \lambda, \\
			& \psi_{\mathcal{A}(t)}(s)=\frac{1}{2 \pi i} \int_{\Gamma} e^{\lambda s}(\lambda^{\alpha} + \mathcal{A}(t))^{-1} d \lambda, \\
		\end{aligned}
		$$
		where
		$$ 
		\Gamma:=\left\{r e^{-i\theta} ; R \leq r<\infty\right\} \cup\left\{R e^{i \varphi} ;|\varphi| \leq \theta\right\} \cup\left\{r e^{i\theta} ; R \leq r<\infty\right\}
		$$
		is oriented counterclockwise and $R>0$, $\theta$ is the same as in Proposition \ref{proposition2.1}. 
		\begin{Lemma}\label{Lemma2.1}
			The operator families \(\{\phi_{\mathcal{A}(t)}(s)\}_{t \in [0,\tau], s > 0}\) and \(\{\psi_{\mathcal{A}(t)}(s)\}_{t \in [0,\tau], s > 0}\) are well-defined and satisfy the following properties:
			\begin{enumerate} 
				\item [{\rm (i)}] $
				\|\mathcal{A}(t)\phi_{\mathcal{A}(t)}(s)\|_{L(V')} \leq Cs^{-\alpha},$
				and $
				\|\mathcal{A}(t)\psi_{\mathcal{A}(t)}(s)\|_{L(V')} \leq Cs^{-1}.$
				\item [{\rm (ii)}] For fixed \(t \in [0,\tau]\), there hold
				\[
				\mathscr{L}(\phi_{\mathcal{A}(t)}(s)) = \lambda^{\alpha-1}(\lambda^{\alpha}+\mathcal{A}(t))^{-1},
				\quad
				\mathscr{L}(\psi_{\mathcal{A}(t)}(s)) = (\lambda^{\alpha}+\mathcal{A}(t))^{-1},
				\]
				where  $\mathscr{L}$ is the Bochner integral operator $$\mathscr{L}(f)(\lambda):=\int_0^\infty e^{-\lambda s}f(s)ds,~~Re(\lambda)>0.$$
				\item [{\rm (iii)}] \(\phi_{\mathcal{A}(t)}(s) \in H^{1}(0,t;\mathscr{L}(V'))\), and
				\[
				\frac{d}{ds} \phi_{\mathcal{A}(t)}(s) = -\mathcal{A}(t)\psi_{\mathcal{A}(t)}(s),
				\quad
				\int_{0}^{s} k(s-r)\psi_{\mathcal{A}(t)}(r)\,dr = \phi_{\mathcal{A}(t)}(s),
				\]
				where the derivative is interpreted as the distributional sense.
			\end{enumerate}
		\end{Lemma}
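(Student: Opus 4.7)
The plan is to derive all three parts from direct manipulation of the Dunford-type contour integrals defining $\phi_{\mathcal{A}(t)}$ and $\psi_{\mathcal{A}(t)}$, using three common ingredients: the sectorial resolvent estimate of Proposition~\ref{proposition2.1}(iii), the algebraic identity $\mathcal{A}(t)(\lambda^\alpha+\mathcal{A}(t))^{-1}=I-\lambda^\alpha(\lambda^\alpha+\mathcal{A}(t))^{-1}$, and uniqueness of the Laplace transform. Well-definedness in $L(V')$ is immediate: on the arc of $\Gamma$ the integrand is uniformly bounded, while on the two rays $\lambda=re^{\pm i\theta}$ with $\theta\in(\pi/2,\pi)$ the factor $|e^{\lambda s}|=e^{-rs|\cos\theta|}$ renders the tail integral absolutely convergent for every $s>0$.

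For (i), I would change variables $\mu=\lambda s$ so that $\Gamma$ becomes a rescaled contour $\Gamma_s$ of the same shape, $e^{\lambda s}$ becomes $e^\mu$, the Jacobian contributes a factor $1/s$, and the resolvent becomes $s^\alpha(\mu^\alpha+s^\alpha\mathcal{A}(t))^{-1}$. Since $s^\alpha\mathcal{A}(t)$ is sectorial with the same constants as $\mathcal{A}(t)$, the operator-valued integrand (after using the algebraic identity to pull out $\mathcal{A}(t)$) is bounded uniformly in $s$, and the scalar integral over $\Gamma_s$ converges. Collecting the powers of $s$, including the extra $\lambda^{\alpha-1}=s^{1-\alpha}\mu^{\alpha-1}$ present in the definition of $\phi$, yields the claimed bounds $\|\mathcal{A}(t)\phi_{\mathcal{A}(t)}(s)\|_{L(V')}\leq Cs^{-\alpha}$ and $\|\mathcal{A}(t)\psi_{\mathcal{A}(t)}(s)\|_{L(V')}\leq Cs^{-1}$.

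For (ii), I would insert the contour integral for $\psi_{\mathcal{A}(t)}$ into $\int_0^\infty e^{-\lambda s}\psi_{\mathcal{A}(t)}(s)\,ds$ and apply Fubini (legitimate by the bounds from (i) once $\operatorname{Re}\lambda$ is sufficiently large) to reduce the identity to the evaluation of $\frac{1}{2\pi i}\int_\Gamma(\lambda-\mu)^{-1}(\mu^\alpha+\mathcal{A}(t))^{-1}d\mu$; closing $\Gamma$ around the simple pole at $\mu=\lambda$ produces $(\lambda^\alpha+\mathcal{A}(t))^{-1}$ by the residue theorem, and analyticity in $\lambda$ removes the largeness condition. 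The identity for $\phi$ is analogous, with an additional $\mu^{\alpha-1}$ factor carried along. For (iii), I would differentiate the definition of $\phi_{\mathcal{A}(t)}$ under the integral sign and invoke the algebraic identity to obtain $\frac{d}{ds}\phi_{\mathcal{A}(t)}(s)=\frac{1}{2\pi i}\int_\Gamma e^{\lambda s}d\lambda-\mathcal{A}(t)\psi_{\mathcal{A}(t)}(s)$; the first term vanishes for $s>0$ by Cauchy's theorem applied to a suitably closed Hankel contour, producing the required derivative formula, and the Sobolev regularity then follows from the pointwise bound of (i). The convolution identity $k\ast\psi_{\mathcal{A}(t)}=\phi_{\mathcal{A}(t)}$ is cleanest via (ii): both sides have the same Laplace transform $\lambda^{\alpha-1}(\lambda^\alpha+\mathcal{A}(t))^{-1}$ (using $\mathscr{L}(k)(\lambda)=\lambda^{\alpha-1}$), and uniqueness closes the argument.

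The main technical obstacle will be the rigorous justification of the contour manipulations in (ii) and the vanishing of $\int_\Gamma e^{\lambda s}d\lambda$ in (iii), since both scalar integrals are only conditionally convergent: the Hankel contour has to be truncated, closed via an auxiliary arc enclosing only the pole $\mu=\lambda$, and then the arc contribution has to be shown to vanish by Jordan-type estimates exploiting $s>0$. Once these scalar steps are secured, the operator-valued assertions transfer immediately thanks to the uniform $L(V')$-bounds on the resolvents coming from Proposition~\ref{proposition2.1}.
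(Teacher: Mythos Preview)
Your proposal is correct and follows essentially the same route as the paper: for (i) you use the algebraic identity $\mathcal{A}(t)(\lambda^\alpha+\mathcal{A}(t))^{-1}=I-\lambda^\alpha(\lambda^\alpha+\mathcal{A}(t))^{-1}$ to get a uniform $L(V')$-bound on the integrand and then read off the $s$-power (the paper estimates the resulting scalar integral directly rather than substituting $\mu=\lambda s$, but this is cosmetic); for (ii) both you and the paper use Fubini plus Cauchy's integral formula; for (iii) both differentiate under the integral and recover the convolution identity via Laplace uniqueness.

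One minor correction: your anticipated ``main technical obstacle'' is not actually present. Since $\theta\in(\pi/2,\pi)$, on the rays $\lambda=re^{\pm i\theta}$ one has $|e^{\lambda s}|=e^{-rs|\cos\theta|}$, so the scalar integral $\int_\Gamma e^{\lambda s}\,d\lambda$ in (iii) is \emph{absolutely} convergent for every $s>0$, and its vanishing follows from Cauchy's theorem without any delicate Jordan-type limiting argument. Likewise in (ii) the integrand $(\lambda-\mu)^{-1}(\mu^\alpha+\mathcal{A}(t))^{-1}$ decays like $|\mu|^{-1-\alpha}$ on the rays, so the contour manipulation is again absolutely justified. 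The paper simply invokes these facts without comment.
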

		\begin{proof}
			Note that for $\lambda\in \Gamma$, we have $\lambda^{\alpha}=|\lambda|^{\alpha}e^{i\alpha\theta}\in \Sigma_\theta$. From Proposition \ref{proposition2.1}, we know that $\|(\lambda^{\alpha}+\mathcal{A}(t))^{-1}\|_{L(V^{\prime})}\leq C/|\lambda^{\alpha}|$ for $\lambda\in \Gamma$, which implies
			\[
			\|\lambda^{\alpha-1}(\lambda^{\alpha}+\mathcal{A}(t))^{-1}\|_{L(V')} \leq \frac{C}{|\lambda|}, \quad 
			\|(\lambda^{\alpha}+\mathcal{A}(t))^{-1}\|_{L(V')} \leq \frac{C}{|\lambda|^{\alpha}}  \text{ on } \Gamma.
			\]
			It follows that
			\begin{equation}\label{EQ2.1}
				\left\|\phi_{\mathcal{A}(t)}(s)\right\|_{L(V^{\prime})} 
				\leq \frac{C}{2 \pi} \int_{\Gamma} e^{\operatorname{Re}(\lambda s)} \frac{|d \lambda|}{{|\lambda|}} \leq C\left(\int_R^{\infty} e^{-srcos\theta} \frac{d r}{r}+R\int_0^\pi e^{Rs\cos \varphi} d \varphi\right) \leq M,
			\end{equation}
			and similarly, choosing $R=s^{\alpha-1}$, we have
			\begin{equation}\label{EQ2.2}
				\begin{aligned}
					\left\|\psi_{\mathcal{A}(t)}(s)\right\|_{L(V^{\prime})} 
					&\leq \frac{C}{2 \pi} \int_{\Gamma} e^{\operatorname{Re}(\lambda s)} \frac{|d \lambda|}{{|\lambda|^{\alpha}}} 
					\\&\leq C\left(\int_1^{\infty} e^{-sr\cos\theta} \frac{d r}{r^{\alpha}} + s^{\alpha-1}\int_0^\pi e^{s^{\alpha}\cos \varphi} d \varphi\right) 
					\\&\leq M^{\prime}s^{\alpha-1}.
				\end{aligned}
			\end{equation}
			These estimates shows that the integral representation of $\phi_{\mathcal{A}(t)}(s)$ and $\psi_{\mathcal{A}(t)}(s)$ are absolutely convergent and therefore well-defined. Following the identity
			$$
			\mathcal{A}(t)(\lambda^{\alpha} + \mathcal{A}(t))^{-1}=I-\lambda^{\alpha}(\lambda^{\alpha} + \mathcal{A}(t))^{-1},
			$$
			we find a constant $C$ such that
			$$
			\|\mathcal{A}(t)(\lambda^{\alpha} + \mathcal{A}(t))^{-1}\|_{L(V^{\prime})}\leq1+\|\lambda^{\alpha}(\lambda^{\alpha} + \mathcal{A}(t))^{-1}\|_{L(V^{\prime})}\leq C.
			$$
			Thus
			$$
			\begin{aligned}
				\|\mathcal{A}(t)\phi_{\mathcal{A}(t)}(s)\|_{L(V^{\prime})}&=\|\frac{1}{2\pi i}\int_{\Gamma}\lambda^{\alpha-1}e^{\lambda s}\mathcal{A}(t)(\lambda^{\alpha}+\mathcal{A}(t))^{-1}d\lambda\|_{L(V^{\prime})}\\
				&\leq C\int_{\Gamma}|\lambda|^{\alpha-1}e^{\operatorname{Re}\lambda s}|d\lambda|\leq Cs^{-\alpha}.
			\end{aligned}
			$$
			The second estimate of $\mathcal{A}(t)\psi_{\mathcal{A}(t)}(s)$ in $L(V^{\prime})$ can be derived in a similar manner. Thus, conclusion (i) follows.
			
			To prove (ii), fix $\lambda\in\mathbb C$ such that $Re(\lambda)>R$. We obtain
			$$
			\begin{aligned}
				\mathscr{L}(\phi_{\mathcal{A}(t)}(s))=\int_0^{\infty} e^{-\lambda s} \phi_{\mathcal{A}(t)}(s) d s&=\frac{1}{2 \pi i} \int_{\Gamma} \mu^{\alpha-1} (\mu^{\alpha}+\mathcal{A}(t))^{-1} \int_0^{\infty} e^{-(\lambda-\mu) t} d t d \mu
				\\&= \frac{-1}{2 \pi i} \int_{\Gamma} \frac{\mu^{\alpha-1} (\mu^{\alpha}+\mathcal{A}(t))^{-1} x}{\mu-\lambda} d \mu
				\\&=\lambda^{\alpha-1} (\lambda^{\alpha}+\mathcal{A}(t))^{-1},
			\end{aligned}
			$$ 
			where we  use Fubini's theorem and Cauchy's integral formula. Therefore, it yields
			$$
			\mathscr{L}(\phi_{\mathcal{A}(t)}(s))=(\lambda^{\alpha}+\mathcal{A}(t))^{-1}.
			$$
			Finally we give the proof of (iii).
			\noindent
			Observe that
			$$
			\mathscr{L}(\int_{0}^{s}k(s-r)\psi_{\mathcal{A}(t)}(r)dr)=\mathscr{L}(k(s))\mathscr{L}(\psi_{\mathcal{A}(t)}(s))=\lambda^{\alpha-1}(\lambda^{\alpha}+\mathcal{A}(t))^{-1}=\mathscr{L}(\phi_{\mathcal{A}(t)}(s)),
			$$
			then by the uniqueness theorem for $\mathscr{L}$, see \cite[Theorem 1.7.3]{Arendt2}, we get 
			$$
			\int_{0}^{s}k(s-r)\psi_{\mathcal{A}(t)}(r)dr=\phi_{\mathcal{A}(t)}(s).
			$$
			On the other hand, using Lebesgue's dominated convergence theorem, we have
			$$
			\begin{aligned}
				\frac{d}{ds}\phi_{\mathcal{A}(t)}(s)&=\frac{d}{ds}\frac{1}{2 \pi i} \int_{\Gamma} \lambda^{\alpha-1}e^{\lambda s}(\lambda^{\alpha} + \mathcal{A}(t))^{-1} d \lambda
				=\frac{1}{2 \pi i} \int_{\Gamma} \frac{d}{ds}\lambda^{\alpha-1}e^{\lambda s}(\lambda^{\alpha} + \mathcal{A}(t))^{-1} d \lambda
				\\&=\frac{1}{2 \pi i} \int_{\Gamma} e^{\lambda s}\lambda^{\alpha}(\lambda^{\alpha} + \mathcal{A}(t))^{-1} d \lambda=-\frac{1}{2 \pi i} \int_{\Gamma} e^{\lambda s}\mathcal{A}(t)(\lambda^{\alpha} + \mathcal{A}(t))^{-1} d \lambda.
			\end{aligned}
			$$
			Therefore, from the closeness of $\mathcal{A}(t)$, we obtain
			$$
			\frac{d}{ds}\phi_{\mathcal{A}(t)}(s)=-\mathcal{A}(t)\psi_{\mathcal{A}(t)}(s).
			$$
			The proof is completed.
		\end{proof}
		\begin{proposition}
			The operator $\phi_{\mathcal{A}(t)}(s)$ converges to identity operator $1$ strongly on $V^{\prime}$ as $s \rightarrow 0$.
		\end{proposition}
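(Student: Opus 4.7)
The plan is to reduce the claim to strong convergence on the dense subspace $V \subset V'$ (density follows from the Gelfand triple $V \hookrightarrow H \hookrightarrow V'$), and then to propagate convergence to all of $V'$ via the uniform bound $\sup_{s>0}\|\phi_{\mathcal{A}(t)}(s)\|_{L(V')} \leq M$ already recorded in (\ref{EQ2.1}). Once convergence on a dense set is in hand, a standard density argument completes the job, so the real work is to show $\phi_{\mathcal{A}(t)}(s)x \to x$ in $V'$ as $s \to 0^+$ for every $x \in V$, with a quantitative rate.

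My main tool will be the resolvent identity
$$\lambda^{\alpha-1}(\lambda^{\alpha}+\mathcal{A}(t))^{-1} = \frac{1}{\lambda}I - \frac{1}{\lambda}\mathcal{A}(t)(\lambda^{\alpha}+\mathcal{A}(t))^{-1},$$
which splits
$$\phi_{\mathcal{A}(t)}(s) = \frac{1}{2\pi i}\int_{\Gamma}\frac{e^{\lambda s}}{\lambda}\,d\lambda \cdot I - \frac{1}{2\pi i}\int_{\Gamma}\frac{e^{\lambda s}}{\lambda}\mathcal{A}(t)(\lambda^{\alpha}+\mathcal{A}(t))^{-1}d\lambda.$$
The first integral equals $I$: closing $\Gamma$ by a large circular arc in the left half-plane (where $|\arg \lambda| > \pi/2$ guarantees exponential decay of $e^{\lambda s}$) yields a closed contour enclosing only the simple pole at $\lambda = 0$, whose residue is $1$. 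For $x \in V$ one then commutes $\mathcal{A}(t)$ past the resolvent and applies Proposition \ref{proposition2.1}(iii) to get
$$\|\phi_{\mathcal{A}(t)}(s)x - x\|_{V'} \leq C\|\mathcal{A}(t)x\|_{V'}\int_{\Gamma}\frac{|e^{\lambda s}|}{|\lambda|^{1+\alpha}}\,|d\lambda|.$$

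Because the integrand defining $\phi_{\mathcal{A}(t)}(s)$ is holomorphic in $\Sigma_\theta \setminus \{0\}$, the radius $R$ of $\Gamma$ may be chosen freely; the decisive choice is $R = 1/s$, and parametrizing the rays and arc exactly as in the proof of Lemma \ref{Lemma2.1} (the ray integrals yield $Cs^{\alpha}$ after the substitution $u = rs$, while the arc contributes $R^{-\alpha}\cdot\text{const} = s^{\alpha}$) produces $\|\phi_{\mathcal{A}(t)}(s)x - x\|_{V'} \leq Cs^{\alpha}\|\mathcal{A}(t)x\|_{V'} \to 0$. The main subtlety lies in the contour manipulations: the original integrand carries a branch singularity at $0$ through $\lambda^{\alpha-1}$, so one must first carry out the algebraic split and only then invoke the residue theorem on the meromorphic piece $e^{\lambda s}/\lambda$, whereas the remainder is handled by absolute convergence and its $O(s^\alpha)$ decay rate comes from the improved factor $|\lambda|^{-1-\alpha}$ combined with the freedom to rescale $R$.
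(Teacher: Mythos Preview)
Your proposal is correct and follows essentially the same approach as the paper: both reduce to $x\in V$, use the algebraic identity $\lambda^{\alpha-1}(\lambda^{\alpha}+\mathcal{A}(t))^{-1}=\lambda^{-1}I-\lambda^{-1}\mathcal{A}(t)(\lambda^{\alpha}+\mathcal{A}(t))^{-1}$ together with $\frac{1}{2\pi i}\int_\Gamma \frac{e^{\lambda s}}{\lambda}\,d\lambda=1$ to obtain the integral representation of $\phi_{\mathcal{A}(t)}(s)x-x$, estimate it via $\|(\lambda^\alpha+\mathcal{A}(t))^{-1}\|_{L(V')}\leq C|\lambda|^{-\alpha}$ with the rescaled contour $R=1/s$ to get the rate $Cs^\alpha\|\mathcal{A}(t)x\|_{V'}$, and then extend to all of $V'$ by density plus the uniform bound (\ref{EQ2.1}). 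Your explicit justification of the residue step and your remark on handling the branch singularity before invoking Cauchy's theorem are welcome clarifications that the paper leaves implicit.
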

		\begin{proof}
			First, let $x \in \mathcal{D}(\mathcal{A}(t))=V$. It follows that
			$$
			\begin{aligned}
				\phi_{\mathcal{A}(t)}(s)x-x & =\frac{1}{2 \pi i} \int_{\Gamma} e^{\lambda s} \lambda^{\alpha-1}\left(\lambda^\alpha I+\mathcal{A}(t)\right)^{-1} x d \lambda-x \\
				& =\frac{1}{2 \pi i} \int_{\Gamma} e^{\lambda s}\left[\lambda^{-1}\lambda^{\alpha}\left(\lambda^\alpha I+\mathcal{A}(t)\right)^{-1}-\lambda^{-1}\right] x d \lambda \\
				& =\frac{1}{2 \pi i} \int_{\Gamma} \lambda^{-1} e^{\lambda s}\left(\lambda^\alpha I+\mathcal{A}(t)\right)^{-1} d \lambda(-\mathcal{A}(t)) x .
			\end{aligned}
			$$
			Similarly, we find 
			$$
			\|\lambda^{-1}(\lambda^{\alpha}+\mathcal{A}(t))^{-1}\|_{L(V^{\prime})}\leq \frac{C}{|\lambda|^{1+\alpha}},~~~ \text{ on }~~ \Gamma.
			$$
			Choosing $R=s^{\alpha}$ in $\Gamma$, we obtain
			$$
			\begin{aligned}
				\|\phi_{\mathcal{A}(t)}(s)x-x\|_{V^{\prime}}&\leq \frac{C}{2 \pi } \int_{\Gamma}  e^{\operatorname{Re}\lambda s}\frac{|d\lambda|}{|\lambda|^{1+\alpha}} \|x\|_V 
				\\&\leq \frac{C}{2 \pi } \left(\int_{1}^{\infty}  e^{-rscos\theta}\frac{dr}{r^{1+\alpha}}+s^{\alpha}\int_{0}^{\pi}  e^{-Rscos\varphi}d\varphi\right)\|x\|_V
				\\&\leq Cs^{\alpha}\|x\|_V.
			\end{aligned}
			$$
			We conclude that $\|\phi_{\mathcal{A}(t)}(s)x-x\|_{V^{\prime}}\rightarrow 0$ as $s\rightarrow0$.  Next, let $x \in V^{\prime}$ be a general vector. The denseness of $V$ in $V^{\prime}$ and uniform boundedness of the norms $\|\phi_{\mathcal{A}(t)}(s)\|_{L(V^{\prime})}$, readily implies that $\phi_{\mathcal{A}(t)}(s)x \rightarrow x$ in $V^{\prime}$.
		\end{proof}
		\begin{remark}\label{remark1}
			In view of this convergence, we are naturally led to define the value of $\phi_{\mathcal{A}(t)}(s)x$ for $x\in V^{\prime}$ at
			$s = 0$ by $\phi_{\mathcal{A}(t)}(s)x=x$.
		\end{remark}
		Next, we present a representation formula for the solution $u$ of problem (FP) in $V^{\prime}$.  Fix $f\in C_c(0,\infty;H)$ and $u_0\in H$. Recall that, according to Zacher's work \cite{Zacher} (also see Achache \cite{Achache3}),  there exist $u\in L_2(0,\tau;V)$ and $v \in H^1(0,\tau; V^{\prime})$ with $v(0)=0$, satisfying $$v(t)=\int_{0}^{t}k(t-s)(u(s)-u_0)ds.$$
		\begin{Lemma}
			For almost every $t\in(0,\tau)$, the identity in $V^{\prime}$ holds
			$$
			\begin{aligned}
				u(t)=&\phi_{\mathcal{\mathcal{A}}(t)}(t)u_0+\int_{0}^{t}\psi_{\mathcal{A}(t)}(t-s)f(s)ds
				\\&+\int_{0}^{t}\psi_{\mathcal{A}(t)}(t-s)(\mathcal{A}(t)-\mathcal{A}(s))u(s)ds.
			\end{aligned}
			$$
		\end{Lemma}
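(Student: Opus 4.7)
The plan is to fix an arbitrary $t\in(0,\tau)$ and rewrite the equation as an \emph{autonomous} Caputo problem with frozen coefficient $\mathcal{A}(t)$, apply the standard resolvent--based representation formula to this frozen problem, and then evaluate at the instant $s=t$. Concretely, starting from the $V'$-identity $v'(s)+\mathcal{A}(s)u(s)=f(s)$ with $v=k*(u-u_0)$ and $v(0)=0$ (i.e.\ $\partial_s^\alpha u+\mathcal{A}(s)u=f$), I add and subtract $\mathcal{A}(t)u(s)$ to obtain
$$\partial_s^\alpha u(s)+\mathcal{A}(t)u(s)=g_t(s),\qquad g_t(s):=f(s)+(\mathcal{A}(t)-\mathcal{A}(s))u(s),$$
which holds for a.e.\ $s\in(0,\tau)$ in $V'$. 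Because $f\in C_c(0,\infty;H)\hookrightarrow L_2(0,\tau;V')$, $u\in L_2(0,\tau;V)$, and $\mathcal{A}(s)\in L(V,V')$ is uniformly bounded by (\ref{A0}), we have $g_t\in L_2(0,\tau;V')$, so the right-hand side is well-behaved.

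Next I take the vector-valued Laplace transform in $s$ (extending the functions by zero to $[0,\infty)$). Since $v(0)=0$ and $\partial_s^\alpha u=v'=(k*(u-u_0))'$, one checks
$$\widehat{\partial_s^\alpha u}(\lambda)=\lambda\widehat{v}(\lambda)=\lambda\,\mathscr{L}(k)(\lambda)\bigl(\hat u(\lambda)-\lambda^{-1}u_0\bigr)=\lambda^\alpha\hat u(\lambda)-\lambda^{\alpha-1}u_0$$
for $\operatorname{Re}\lambda$ large. Plugging this into the frozen equation and using $\lambda^\alpha\in\rho(-\mathcal{A}(t))$ from Proposition~\ref{proposition2.1} yields
$$\hat u(\lambda)=\lambda^{\alpha-1}(\lambda^\alpha+\mathcal{A}(t))^{-1}u_0+(\lambda^\alpha+\mathcal{A}(t))^{-1}\hat g_t(\lambda).$$
By Lemma~\ref{Lemma2.1}(ii) and the convolution theorem, the right-hand side is precisely the Laplace transform of $s\mapsto\phi_{\mathcal{A}(t)}(s)u_0+\int_0^s\psi_{\mathcal{A}(t)}(s-r)g_t(r)\,dr$. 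Uniqueness of the Laplace transform then forces
$$u(s)=\phi_{\mathcal{A}(t)}(s)u_0+\int_0^s\psi_{\mathcal{A}(t)}(s-r)g_t(r)\,dr\quad\text{a.e.\ }s\in(0,\tau),$$
and setting $s=t$ and expanding $g_t$ gives the claimed identity.

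The main technical obstacle is that the extension-by-zero used for the Laplace transform destroys the frozen equation on $(\tau,\infty)$, so the Laplace argument must be localized. A clean way to bypass this is to verify the representation directly: define $w(s)$ as the right-hand side of the putative formula, and show by applying Lemma~\ref{Lemma2.1}(iii) (in particular $\tfrac{d}{ds}\phi_{\mathcal{A}(t)}=-\mathcal{A}(t)\psi_{\mathcal{A}(t)}$ and $k*\psi_{\mathcal{A}(t)}=\phi_{\mathcal{A}(t)}$) that $w$ satisfies $\partial_s^\alpha w+\mathcal{A}(t)w=g_t$ on $(0,\tau)$ with $w(0)=u_0$ in the sense of Remark~\ref{remark1}. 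Since $u$ solves the same frozen inhomogeneous equation with the same initial value and since maximal $L_2$-regularity of the frozen autonomous problem in $V'$ (Zacher's theorem, or even a direct energy estimate using coercivity of $\mathcal{A}(t)$) gives uniqueness in $L_2(0,\tau;V)$, we conclude $w=u$ on $[0,\tau]$, and the evaluation at $s=t$ completes the proof.
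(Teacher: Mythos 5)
Your route is genuinely different from the paper's: you freeze the coefficient, regard $u$ as a weak solution of the autonomous problem $\partial_s^\alpha u+\mathcal{A}(t)u=g_t$ with $g_t=f+(\mathcal{A}(t)-\mathcal{A}(\cdot))u$, check that $w_t(s):=\phi_{\mathcal{A}(t)}(s)u_0+(\psi_{\mathcal{A}(t)}\ast g_t)(s)$ is also a weak solution, and invoke uniqueness; the paper instead differentiates $s\mapsto\phi_{\mathcal{A}(t)}(t-s)v(s)$ with $v=k\ast(u-u_0)$, integrates up to the endpoint $s=t$, and removes the kernel $k$ by a convolution argument. You are right to discard your Laplace-transform variant (the cut-off at $\tau$ destroys it), and the verification of $w_t$ via Lemma \ref{Lemma2.1}(iii) together with Zacher-type uniqueness is a reasonable plan. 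However, the final step hides a genuine gap: uniqueness only yields $u(s)=w_t(s)$ for \emph{almost every} $s\in(0,\tau)$, with an exceptional null set that depends on the frozen parameter $t$, whereas the lemma requires the identity at the single point $s=t$, which is itself a null set. From ``for each $t$, $u=w_t$ a.e.'' one cannot conclude ``for a.e.\ $t$, $u(t)=w_t(t)$'': setting $W(t,s):=u(s)$ for $s\neq t$ and $W(t,t):=u(t)+x_0$ with $x_0\neq0$ gives a family with the same a.e.\ property that fails on the whole diagonal. So ``setting $s=t$'' is not a legitimate move as written.

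The gap can be closed, but it needs an extra ingredient you do not supply. For instance, under the continuity (Dini) hypothesis on the form, $t\mapsto(\lambda^\alpha+\mathcal{A}(t))^{-1}$ is norm continuous, hence $t\mapsto\phi_{\mathcal{A}(t)}(s)u_0$ and $t\mapsto(\psi_{\mathcal{A}(t)}\ast g_t)(s)$ are continuous in $V'$ for a.e.\ fixed $s$ (dominated convergence against the bound $\|\psi_{\mathcal{A}(t)}(r)\|_{L(V')}\le Cr^{\alpha-1}$); choosing a countable dense set of frozen times one then gets $u(s)=w_t(s)$ for a.e.\ $s$ and \emph{all} $t$ simultaneously, after which $t=s$ is allowed. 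The paper's argument sidesteps the diagonal issue by producing an identity already evaluated at the endpoint $t$. A secondary point: to apply uniqueness you must also show that $w_t$ lies in the solution class, i.e.\ $w_t\in L_2(0,\tau;V)$ and $k\ast(w_t-u_0)\in H^1(0,\tau;V')$; this requires the smoothing estimate $\|\psi_{\mathcal{A}(t)}(r)\|_{L(V',V)}\le Cr^{\alpha/2-1}$ (deducible from Proposition \ref{proposition2.1}), which is not contained in Lemma \ref{Lemma2.1} and should be stated and proved as part of your argument.
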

		\begin{proof}
			Since $v(\cdot)\in H^1(0,\tau; V^{\prime})$, we take a distributional derivative 
			$$
			(\phi_{\mathcal{A}(t)}(t-s)v(s))'=\psi_{\mathcal{A}(t)}(t-s)\mathcal{A}(t)v(s)+\phi_{\mathcal{A}(t)}(t-s)(-\mathcal{A}(s)u(s)+f(s)).
			$$
			Integrating both sides of the equation with respected to $s$,   the left-hand side becomes
			$$
			\phi_{\mathcal{A}(t)}(t-s)v(s)|^t_0=\phi_{\mathcal{A}(t)}(0)v(t)-\phi_{\mathcal{A}(t)}(t)v(0)=v(t)=\int_{0}^{t}k(t-s)(u(s)-u_0),
			$$
			where $\phi_{\mathcal{A}(t)}(0)v(t)=v(t)$ by Remark \ref{remark1}. For the right-hand side, 
			observe that
			$$
			\begin{aligned}
				\int_{0}^{t}\psi_{\mathcal{A}(t)}(t-s)v(s)ds
				&=\int_{s=0}^{t}\psi_{\mathcal{A}(t)}(t-s)\int_{r=0}^{s}k(s-r)(u(r)-u_0)ds
				\\&=\int\int_{0\leq r\leq s\leq t}\psi_{\mathcal{A}(t)}(t-s)k(s-r)(u(r)-u_0)drds
				\\&=\int_{r=0}^{t}\int_{t}^{s=r}\psi_{\mathcal{A}(t)}(t-s)k(s-r)(u(r)-u_0)dsdr
				\\&=\int_{r=0}^{t}\left(\int_{s=0}^{t-r}\psi_{\mathcal{A}(t)}(t-r-s)k(s)ds\right)(u(r)-u_0)dr
				\\&=\int_{r=0}^{t}\phi_{\mathcal{A}(t)}(t-r)(u(r)-u_0)dr.
			\end{aligned}
			$$
			Indeed this is the associativity of convolution. By a similar argument,
			we rewrite the right-hand side  as
			$$
			\begin{aligned}
				\int_{0}^{t}\phi_{\mathcal{A}(t)}(t-s)(\mathcal{A}(t)-\mathcal{A}(s))u(s)ds&-\int_{0}^{t}\mathcal{A}(t)\phi_{\mathcal{A}(t)}(t-s)u_0ds
				\\&+\int_{0}^{t}\phi_{\mathcal{A}(t)}(t-s)f(s)ds=\int_{0}^{t}k(t-r)x(r)dr,
			\end{aligned}
			$$
			where
			$$
			\begin{aligned}
				x(r)=\int_{0}^{r}\psi_{\mathcal{A}(t)}(r-s)(\mathcal{A}(t)-\mathcal{A}(s))u(s)ds&-\int_{0}^{r}\mathcal{A}(t)\psi_{\mathcal{A}(t)}(r-s)u_0ds
				\\&+\int_{0}^{r}\psi_{\mathcal{A}(t)}(r-s)f(s)ds.
			\end{aligned}
			$$
			Since
			$$
			\int_{s=0}^{r}\mathcal{A}(t)\psi_{\mathcal{A}(t)}(r-s)u_0ds=-\phi_{\mathcal{A}(t)}(r-s)u_0|_0^r=\phi_{\mathcal{A}(t)}(r)u_0-u_0,
			$$
			and  $u\in L_2(0,\tau;V)$,
			with the estimate of  $\phi_{\mathcal{A}(t)}$ and $\psi_{\mathcal{A}(t)}$ in \eqref{EQ2.1} and \eqref{EQ2.2},  it is clear that  
			$$
			x(r)\in L_2(0,\tau;V^{\prime}).
			$$ 
			Then we have
			$$
			\int_0^t k(t-s)(x(s)-(u(s)-u_0))ds=0,~~ \text{ for all } t\in(0,\tau),
			$$
			which implies that
			$$
			x(t)=u(t)-u_0 \text{ a.e. on }  (0,\tau).
			$$
			Therefore we get in $V^{\prime}$,
			$$
			\begin{aligned}
				u(t)=&\phi_{\mathcal{\mathcal{A}}(t)}(t)u_0+\int_{0}^{t}\psi_{\mathcal{A}(t)}(t-s)f(s)ds
				\\&+\int_{0}^{t}\psi_{\mathcal{A}(t)}(t-s)(\mathcal{A}(t)-\mathcal{A}(s))u(s)ds,\ \ \text{ a.e. on }  (0,\tau).
			\end{aligned}
			$$
		\end{proof}
		
		\section{The proofs of maximal $L_p$-regularity} \label{sect:4}
		In this section, we will provide the complete proof at the end.
		Introducing the transformations for a sufficiently large $\delta>0$: 
		$u^{\delta}(t)=u(t)e^{-\delta t}$, $\phi^{\delta}_{\mathcal{A}(t)}(s)=\phi_{\mathcal{A}(t)}(s)e^{-\delta s}$, $\psi^{\delta}_{\mathcal{A}(t)}(s)=\psi_{\mathcal{A}(t)}(s)e^{-\delta s}$, $f^{\delta}(t)=f(t)e^{-\delta t}$, we first verify several lemmas for the abstract Volterra equation  \eqref{EQ4.1}.

		\begin{Lemma}\label{4.1}
			The following decomposition is well-defined
			\begin{equation*}
				\mathcal{A}(t)u^{\delta}(t)=(Q(\mathcal{A}(\cdot)u^{\delta}(\cdot))(t)+(Lf)(t)+(Ru_0)(t),
			\end{equation*}
			where 
			\begin{align*}
				(Qg)(t):=&\int_{0}^{t}\mathcal{A}(t)\psi^{\delta}_{\mathcal{A}(t)}(t-s)(\mathcal{A}(t)-\mathcal{A}(s))\mathcal{A}(s)^{-1}g(s)ds,\\
				(Lf)(t)=&\mathcal{A}(t)\int_{0}^{t}\psi^{\delta}_{\mathcal{A}(t)}(t-s)f(s)ds,
			\end{align*}
			and $(Ru_0)(t)=\mathcal{A}(t)\phi^{\delta}_{\mathcal{A}(t)}(t)u_0.
			$
		\end{Lemma}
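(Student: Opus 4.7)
The plan is a direct algebraic manipulation: apply $\mathcal{A}(t)$ to both sides of the $\delta$-transformed Volterra equation \eqref{EQ4.1}, pull it inside the Bochner integrals using the kernel bounds from Lemma \ref{Lemma2.1}(i), and for the last integrand insert the identity $I=\mathcal{A}(s)^{-1}\mathcal{A}(s)$ to expose the unknown $\mathcal{A}(s)u^{\delta}(s)$.

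First I would apply $\mathcal{A}(t)$ termwise to \eqref{EQ4.1}. Closedness of $\mathcal{A}(t)$, combined with the kernel bounds $\|\mathcal{A}(t)\phi^{\delta}_{\mathcal{A}(t)}(s)\|_{L(V')}\leq Cs^{-\alpha}$ and $\|\mathcal{A}(t)\psi^{\delta}_{\mathcal{A}(t)}(s)\|_{L(V')}\leq Cs^{-1}$ supplied by Lemma \ref{Lemma2.1}(i), permits $\mathcal{A}(t)$ to be moved inside each Bochner integral. The first and second resulting terms then read $(Ru_0)(t)$ and $(Lf^{\delta})(t)$ respectively, matching the definitions given in the statement.

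For the third term I would insert $I=\mathcal{A}(s)^{-1}\mathcal{A}(s)$ immediately before $u^{\delta}(s)$. This is legitimate because Proposition \ref{proposition2.1} together with the remark after it ensures $0\in\rho(-\mathcal{A}(s))$, so $\mathcal{A}(s)^{-1}\in L(V',V)$ exists and is bounded uniformly in $s\in[0,\tau]$. Setting $g(s):=\mathcal{A}(s)u^{\delta}(s)$, the integral becomes precisely $(Qg)(t)$, yielding the claimed decomposition.

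To confirm well-definedness I would verify that the integrand of $Q$ is Bochner integrable in $s$ for each $t$. Combining the kernel estimate above for $\mathcal{A}(t)\psi^{\delta}_{\mathcal{A}(t)}$, the Dini hypothesis $\|\mathcal{A}(t)-\mathcal{A}(s)\|_{L(V,V')}\leq\omega(|t-s|)$ on the form, and the uniform boundedness of $\mathcal{A}(s)^{-1}:V'\to V$, the operator norm of the $Q$-kernel in $L(V')$ is controlled by $C(t-s)^{-1}\omega(|t-s|)$; integrability near the diagonal is then guaranteed by condition \eqref{A1}. The terms $Ru_0$ and $Lf^{\delta}$ are well-defined pointwise directly from the kernel estimates. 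No substantive obstacle is expected here, since the lemma is essentially a bookkeeping step that recasts \eqref{EQ4.1} into the fixed-point form $\mathcal{A}(\cdot)u^{\delta}=(I-Q)^{-1}(Ru_0+Lf^{\delta})$ which drives the subsequent maximal regularity analysis; the delicate estimates on $I-Q$, $L$, and $R$ are deferred to the lemmas that follow.
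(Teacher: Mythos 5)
Your treatment of the $Q$-term and of $Ru_0$ is essentially the paper's argument: insert $\mathcal{A}(s)^{-1}\mathcal{A}(s)$, bound the kernel by $C\,\omega(t-s)(t-s)^{-1}$ using Lemma \ref{Lemma2.1}(i), the form estimate and the boundedness of $\mathcal{A}(s)^{-1}$, and conclude integrability from \eqref{A1} (the paper does this via Young's inequality with $\|u(\cdot)\|_V\in L_2$); the bound $\|\mathcal{A}(t)\phi^{\delta}_{\mathcal{A}(t)}(t)u_0\|_{V'}\le Ct^{-\alpha}\|u_0\|_{V'}$ settles the $R$-term exactly as in the paper.

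The gap is in your handling of the $L$-term. The operator $L$ is defined with $\mathcal{A}(t)$ \emph{outside} the integral, so what must be shown is that $\int_0^t\psi^{\delta}_{\mathcal{A}(t)}(t-s)f^{\delta}(s)\,ds$ lies in $D(\mathcal{A}(t))=V$. Your justification — closedness of $\mathcal{A}(t)$ plus the bound $\|\mathcal{A}(t)\psi^{\delta}_{\mathcal{A}(t)}(s)\|_{L(V')}\le Cs^{-1}$ — does not deliver this: to move a closed operator through a Bochner integral (Hille's theorem) you need Bochner integrability of $s\mapsto\mathcal{A}(t)\psi^{\delta}_{\mathcal{A}(t)}(t-s)f^{\delta}(s)$, and the only bound you invoke, $C(t-s)^{-1}\|f(s)\|_{V'}$, has a non-integrable singularity at $s=t$ for generic $f$ not vanishing there. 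So the claim that $Lf^{\delta}$ is ``well-defined pointwise directly from the kernel estimates'' is unsupported as written. The paper sidesteps this entirely by a subtraction argument: since the solution satisfies $u\in L_2(0,\tau;V)$, one has $u^{\delta}(t)\in V$ for a.e.\ $t$, and once the $\phi$-term and the commutator term in \eqref{EQ4.1} are shown to lie in $V$, the remaining $\psi\ast f$ term lies in $V$ by the equation itself — no interchange of $\mathcal{A}(t)$ with that integral is needed. Alternatively, your direct route can be repaired by exploiting that $f$ is $H$-valued: Proposition \ref{proposition2.1}(iv) gives $\|\mathcal{A}(t)(\lambda^{\alpha}+\mathcal{A}(t))^{-1}\|_{L(H,V')}\le C|\lambda|^{-\alpha/2}$, whence $\|\mathcal{A}(t)\psi^{\delta}_{\mathcal{A}(t)}(s)\|_{L(H,V')}\le Cs^{\alpha/2-1}$, which is integrable; but some such additional input is genuinely required.
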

		\begin{proof}
			It suffices to show that each term in the \eqref{EQ4.1} is in $V=D(\mathcal{A}(t))$.  For $u \in V$, using (i) in Lemma \ref{Lemma2.1},  we have
			$$
			\begin{aligned}
				\|\mathcal{A}(t)\psi^{\delta}_{\mathcal{A}(t)}(t-s)(\mathcal{A}(t)-\mathcal{A}(s))u^{\delta}(s)\|_{V^{\prime}}
				&\leq\frac{C}{t-s}\|(\mathcal{A}(t)-\mathcal{A}(s))u(s)\|_{V^{\prime}}
				\\&=\frac{C}{t-s}\sup_{\|v\|_V=1}|a(t;u(s),v)-a(s;u(s),v)|
				\\&\leq C\frac{\omega(t-s)}{t-s}\|u(s)\|_{V}.
			\end{aligned}
			$$
			Recall that $\|u(\cdot)\|_{V}\in L_2(0,\tau;\mathbb R)$ and $\omega(t)/t\in L_1(0,\tau;\mathbb R)$ by virtue of \eqref{A1}. Therefore,  Young's convolution inequality shows that
			$$
			\mathbf{1}_{[0,t]}(s)\mathcal{A}(t)\psi^{\delta}_{\mathcal{A}(t)}(t-s)(\mathcal{A}(t)-\mathcal{A}(s))u^{\delta}(s)ds\in L_1(0,\tau;V^{\prime}),
			$$
			where $\mathbf{1}_{[0,t]}$ denotes the characteristic function on $[0,t]$, which also implies 
			$$			\int_{0}^{t}\psi^{\delta}_{\mathcal{A}(t)}(t-s)(\mathcal{A}(t)-\mathcal{A}(s))u^{\delta}(s)ds \in V.
			$$
			In addition,
			$$
			\|(Ru_0)(t)\|_{V^{\prime}}=\|\mathcal{A}(t)\phi^{\delta}_{\mathcal{A}(t)}(t)u_0\|_{V^{\prime}}\leq Ct^{-\alpha}\|u_0\|_{V^{\prime}},
			$$
			which implies $\phi^{\delta}_{\mathcal{A}(t)}(t)u_0\in D(\mathcal{A}(t))$.
			Since $u^{\delta}\in V$, by \eqref{EQ4.1} we get $$\int_{0}^{t}\psi^{\delta}_{\mathcal{A}(t)}(t-s)f(s)ds\in V,$$ immediately. The proof is completed.
		\end{proof}
		\begin{Lemma}\label{4.2}
			$(I-Q)^{-1}$ is bounded on $L_p(0, \tau ; H)$.
		\end{Lemma}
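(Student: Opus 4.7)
The strategy is to show that $Q$ is bounded on $L_p(0,\tau;H)$ with operator norm that can be made strictly less than one by choosing $\delta$ sufficiently large, so that $I-Q$ is boundedly invertible by a Neumann series. The main ingredient will be a pointwise bound for the operator-valued kernel
\[
K(t,s):=\mathcal{A}(t)\psi^{\delta}_{\mathcal{A}(t)}(t-s)(\mathcal{A}(t)-\mathcal{A}(s))\mathcal{A}(s)^{-1}\in L(H),\qquad 0<s<t<\tau.
\]

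To estimate $K(t,s)$ I would factor through $V'$. Coercivity of the form gives $\|\mathcal{A}(s)^{-1}\|_{L(H,V)}\le C$: for $w=\mathcal{A}(s)^{-1}g$ with $g\in H$, one has $\gamma\|w\|_V^2\le|a(s;w,w)|=|(g,w)_H|\le C\|g\|_H\|w\|_V$. Together with the regularity hypothesis of Theorem \ref{T1}, this yields $\|(\mathcal{A}(t)-\mathcal{A}(s))\mathcal{A}(s)^{-1}\|_{L(H,V')}\le C\omega(|t-s|)$. The remaining ingredient is the one-sided smoothing bound
\[
\|\mathcal{A}(t)\psi_{\mathcal{A}(t)}(r)\|_{L(V',H)}\le C\,r^{-1-\alpha/2},\qquad r>0,
\]
which I would derive from a resolvent estimate $\|(\lambda^{\alpha}+\mathcal{A}(t))^{-1}\|_{L(V',H)}\le C|\lambda|^{-\alpha/2}$ (obtained by duality from Proposition \ref{proposition2.1}(iv) applied to the adjoint form $a^{*}(u,v)=\overline{a(v,u)}$, which satisfies (\ref{A0}) with the same constants), inserted into the contour representation of $\mathcal{A}(t)\psi_{\mathcal{A}(t)}(r)$ and then optimising the radius $R=r^{-1}$ in $\Gamma$, in the spirit of \eqref{EQ2.1}--\eqref{EQ2.2}. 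Composing the three bounds produces
\[
\|K(t,s)\|_{L(H)}\le C(t-s)^{-1-\alpha/2}\,\omega(t-s)\,e^{-\delta(t-s)}.
\]

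Since this majorant depends only on the difference $t-s$, the operator $Q$ is dominated by a (truncated) convolution with the scalar kernel $\kappa_{\delta}(r):=C\,r^{-1-\alpha/2}\omega(r)e^{-\delta r}$. The Dini-scale hypothesis (\ref{A1}) is exactly the assertion $\kappa_{0}\in L_{1}(0,\tau)$, so Young's convolution inequality in the Bochner setting gives
\[
\|Q\|_{L_{p}(0,\tau;H)\to L_{p}(0,\tau;H)}\le\|\kappa_{\delta}\|_{L_{1}(0,\tau)}\qquad\text{for every }p\in(1,\infty).
\]
By dominated convergence $\|\kappa_{\delta}\|_{L_{1}(0,\tau)}\to0$ as $\delta\to\infty$, so I can fix $\delta$ so large that this norm is at most $1/2$. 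A Neumann series then produces $(I-Q)^{-1}\in L(L_{p}(0,\tau;H))$ with norm at most $2$.

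The delicate step in this plan is the $V'\to H$ smoothing estimate for $\mathcal{A}(t)\psi_{\mathcal{A}(t)}$, which hinges on the resolvent bound $\|(\lambda^{\alpha}+\mathcal{A}(t))^{-1}\|_{L(V',H)}\le C|\lambda|^{-\alpha/2}$ not stated explicitly in Proposition \ref{proposition2.1}. Once that bound is justified by the duality argument above, the remaining work — contour optimisation, Young's inequality, dominated convergence, and the Neumann series — is routine. It is also worth noting that the exponent $1+\alpha/2$ appearing in the kernel matches the exponent in (\ref{A1}) precisely; this is the analogue, in the fractional setting, of the exponent $3/2$ in the corresponding first-order estimate of Haak--Ouhabaz \cite{Haak}.
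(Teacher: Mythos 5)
Your proposal is correct and follows essentially the same route as the paper: the same kernel factorization through $V'$ using the adjoint-form resolvent bound $\|(\overline{\lambda^{\alpha}}+\mathcal{A}^{*}(t))^{-1}\|_{L(H,V)}\le C|\lambda|^{-\alpha/2}$ (the paper phrases it as a duality pairing in \eqref{EQA}), the resulting convolution bound $C\,\omega(t-s)(t-s)^{-1-\alpha/2}e^{-\delta(t-s)}$, and smallness for large $\delta$ plus a Neumann series. The only cosmetic differences are that you apply Young's inequality directly in $L_p$ where the paper interpolates between $L_1$ and $L_\infty$, and you justify the smallness of the kernel norm by dominated convergence, which is in fact cleaner than the paper's mean-value-theorem argument.
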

		\begin{proof}
			It suffices to show that $Q$ is bounded and contractive on $L_p(0, \tau ; H)$ by Neumann's theorem. Note that
			$$
			\|(Qg)(t)\|_H\leq\int_{0}^{t}\|\mathcal{A}(t)\psi^{\delta}_{\mathcal{A}(t)}(t-s)(\mathcal{A}(t)-\mathcal{A}(s))\mathcal{A}(s)^{-1}g(s)\|_H ds,
			$$
			it allows us to rewrite $Qg$ by the functional calculus as
			$$
			\begin{aligned}
				\mathcal{A}(t)&\psi^{\delta}_{\mathcal{A}(t)}(t-s)(\mathcal{A}(t)-\mathcal{A}(s))\mathcal{A}(s)^{-1}g(s)\\&=e^{-\delta(t-s)}\int_{\Gamma} e^{\lambda(t-s)}\lambda^{\alpha}(\lambda^{\alpha}+\mathcal{A}(t))^{-1}(\mathcal{A}(t)-\mathcal{A}(s))\mathcal{A}(s)^{-1}g(s)d\lambda.
			\end{aligned}
			$$
			For any $u, v\in H$, we observe that
			\begin{equation}\label{EQA}
				\begin{aligned}
					&((\lambda^{\alpha}+\mathcal{A}(t))^{-1}(\mathcal{A}(t)-\mathcal{A}(s))\mathcal{A}(s)^{-1}u,v )_H \\&=((\mathcal{A}(t)-\mathcal{A}(s))\mathcal{A}(s)^{-1}u,(\overline{\lambda^{\alpha}}+\mathcal{A}^{*}(t))^{-1}v )_H
					\\&=(\mathcal{A}(t)\mathcal{A}(s)^{-1}u,(\overline{\lambda^{\alpha}}+\mathcal{A}^{*}(t))^{-1}v )_H-(\mathcal{A}(s)\mathcal{A}(s)^{-1}u,(\overline{\lambda^{\alpha}}+\mathcal{A}^{*}(t))^{-1}v )_H
					\\&=a(t;\mathcal{A}(s)^{-1}u,(\overline{\lambda^{\alpha}}+\mathcal{A}^{*}(t))^{-1}v)-a(s;\mathcal{A}(s)^{-1}u,(\overline{\lambda^{\alpha}}+\mathcal{A}^{*}(t))^{-1}v)
					\\&\leq C\omega(t-s)\|\mathcal{A}(s)^{-1}u\|_V\|\overline{\lambda^{\alpha}}+\mathcal{A}^{*}(t))^{-1}v\|_V.
				\end{aligned}
			\end{equation}
			By proposition \ref{proposition2.1} and the boundedness of $A(s)^{-1}$, we have
			$$
			\|\mathcal{A}(s)^{-1}u\|_V\leq C\|u\|_H, \ \ \text{and} \ \ \|(\overline{\lambda^{\alpha}}+\mathcal{A}^{*}(t))^{-1}v\|_V\leq \frac{C}{|\lambda|^{\frac{\alpha}{2}}}\|v\|_H.
			$$
			Therefore, we get
			$$
			\begin{aligned}
				&\left\|\int_{\Gamma} e^{\lambda(t-s)}\lambda^{\alpha}(\lambda^{\alpha}+\mathcal{A}(t))^{-1}(\mathcal{A}(t)-\mathcal{A}(s))\mathcal{A}(s)^{-1}g(s)d\lambda\right\|_H
				\\&\leq C \int_{\Gamma}|\lambda|^{\frac{\alpha}{2}} e^{-(t-s)\operatorname{Re}\lambda}\omega(t-s)\|g(s)\|_H |d\lambda|
				\\&\leq C\frac{\omega(t-s)}{(t-s)^{1+\alpha/2}}\|g(s)\|_H,
			\end{aligned}
			$$
			from which it follows that
			$$
			\|(Qg)(t)\|_H\leq C \int_{0}^{t}e^{-\delta(t-s)}\frac{\omega(t-s)}{(t-s)^{1+\alpha/2}}\|g(s)\|_H ds.
			$$
			Then, by Young's convolution inequality, (\ref{A1}) implies that $Q$ is bounded on $L_1(0, \tau ; H)$  as
			$$
			\| Qg \|_{L_1(0,\tau;H)}\leq C\|{\omega(r)}e^{-\delta r} {r^{-(1+\alpha/2)}}\|_{L_1(0,\tau;\mathbb R)}\|g\|_{L_1(0,\tau;H)},
			$$
			and it is also bounded on $L_{\infty}(0, \tau ; H)$ by 
			$$
			\|(Qg)(t)\|_H\leq C \|{\omega(r)}e^{-\delta r} {r^{-(1+\alpha/2)}}\|_{L_1(0,\tau;\mathbb R)}\|g \|_{L_{\infty}(0,\tau;H)}.
			$$
			Therefore, $Q$ is bounded on $L_p(0, \tau ; H)$ by Riesz-Thorin interpolation theorem.
			Since $\omega$ is continuous, applying the mean value theorem, we have for some $\xi\in (0,\delta)$,
			$$
			\int_{0}^{t}e^{-\delta r}\frac{\omega(r)}{r^{1+\alpha/2}}dr=e^{-\delta \xi}\int_{0}^{t}\frac{\omega(r)}{r^{1+\alpha/2}}dr.
			$$
			By choosing $\delta$ large enough, we make $Q$ strictly contractive in $L_p(0,\tau;H)$, so that $(I-Q)^{-1}$ is bounded by the Neumann series. The proof is completed. 
		\end{proof}
		
		\begin{Lemma}\label{4.3}
			The following three different cases for $\alpha\in(0,1)$ are true:
			\begin{itemize}
				\item [{\rm (i)}] For $\alpha<\frac{1}{p}$,    $Rx\in L_p(0, \tau ; H)$ if $x\in H$.
				\item [{\rm (ii)}] For $\alpha=\frac{1}{p}$,    $Rx\in L_p(0, \tau ; H)$ if $x\in (H, \mathcal{D}(A(0)))_{\epsilon, 2}$, with the associated estimate
				$$
				\|R x\|_{L_p(0, \tau ; H)} \leq C\|x\|_{(H, \mathcal{D}(A(0)))_{\epsilon, 2}}.
				$$ 
				\item [{\rm (iii)}] For $\alpha>\frac{1}{p}$,   $Rx\in L_p(0, \tau ; H)$ if $x\in(H, \mathcal{D}(A(0)))_{1-\frac{1}{\alpha p}, p}$, with the associated estimate
				$$
				\|R x\|_{L_p(0, \tau ; H)} \leq C\|x\|_{(H, \mathcal{D}(A(0))_{1-\frac{1}{\alpha p}, p}}.
				$$
			\end{itemize}
		\end{Lemma}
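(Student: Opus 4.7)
The starting point is the $L(H)$-analogue of Lemma \ref{Lemma2.1}(i): repeating the contour estimate there with Proposition \ref{proposition2.1}(ii) in place of (iii), together with the standard sectorial bound $\|\mathcal{A}(t)^{1-\gamma}(\lambda^{\alpha}+\mathcal{A}(t))^{-1}\|_{L(H)}\leq C|\lambda|^{-\alpha\gamma}$, gives
\[
\bigl\|\mathcal{A}(t)^{1-\gamma}\phi^{\delta}_{\mathcal{A}(t)}(t)\bigr\|_{L(H)}\leq C\,e^{-\delta t}\,t^{-\alpha(1-\gamma)},\qquad 0\leq\gamma<1.
\]
The three cases of the lemma then correspond to three ways of spending the exponent $\alpha(1-\gamma)$ against the integrability of $t^{-1}$ on $L_p(0,\tau)$.

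For (i), take $\gamma=0$: the bound $\|(Rx)(t)\|_H\leq Ct^{-\alpha}\|x\|_H$ is in $L_p(0,\tau)$ exactly when $\alpha p<1$. For (ii), use the Hilbert-space identification $(H,\mathcal{D}(A(0)))_{\epsilon,2}=\mathcal{D}(A(0)^{\epsilon})$ and the uniform comparability of $\mathcal{D}(\mathcal{A}(t)^{\epsilon})$ with $\mathcal{D}(\mathcal{A}(0)^{\epsilon})$ (all forms share the common domain $V$, which fixes the lower end of the interpolation scale for small $\epsilon$). Then
\[
\|(Rx)(t)\|_H\leq\bigl\|\mathcal{A}(t)^{1-\epsilon}\phi^{\delta}_{\mathcal{A}(t)}(t)\bigr\|_{L(H)}\|\mathcal{A}(t)^{\epsilon}x\|_H\leq Ct^{-\alpha(1-\epsilon)}\|x\|_{(H,\mathcal{D}(A(0)))_{\epsilon,2}},
\]
and since $\alpha p(1-\epsilon)=1-\epsilon<1$, the right-hand side lies in $L_p(0,\tau)$.

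For (iii), the critical case $\alpha p>1$, I decompose
\[
\mathcal{A}(t)\phi^{\delta}_{\mathcal{A}(t)}(t)x=\mathcal{A}(0)\phi^{\delta}_{\mathcal{A}(0)}(t)x+\bigl[\mathcal{A}(t)\phi^{\delta}_{\mathcal{A}(t)}(t)-\mathcal{A}(0)\phi^{\delta}_{\mathcal{A}(0)}(t)\bigr]x.
\]
For the frozen first term I invoke the trace-space characterization for the $\alpha$-fractional solution operator (see e.g. \cite{Bazhlekova,Zacher1}),
\[
\left(\int_0^\tau\|\mathcal{A}(0)\phi_{\mathcal{A}(0)}(t)x\|_H^p\,dt\right)^{1/p}\leq C\|x\|_{(H,\mathcal{D}(A(0)))_{1-\frac{1}{\alpha p},p}},
\]
which expresses precisely that $x$ sits in the correct trace space iff the autonomous solution $\phi_{\mathcal{A}(0)}(\cdot)x$ of $\partial_t^{\alpha}u+\mathcal{A}(0)u=0$ has maximal $L_p$-regularity. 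For the perturbation term, insert the second resolvent identity
\[
\mathcal{A}(t)(\lambda^{\alpha}+\mathcal{A}(t))^{-1}-\mathcal{A}(0)(\lambda^{\alpha}+\mathcal{A}(0))^{-1}=\lambda^{\alpha}(\lambda^{\alpha}+\mathcal{A}(t))^{-1}(\mathcal{A}(t)-\mathcal{A}(0))(\lambda^{\alpha}+\mathcal{A}(0))^{-1}
\]
into the contour representation of the difference. Exactly as in Lemma \ref{4.2}, the form increment $\mathcal{A}(t)-\mathcal{A}(0)$ is controlled pointwise by $\omega(t)$, and splitting the bound $|\lambda|^{-\alpha/2}$ between the two resolvents via Proposition \ref{proposition2.1}(iv) produces
\[
\bigl\|\bigl[\mathcal{A}(t)\phi^{\delta}_{\mathcal{A}(t)}(t)-\mathcal{A}(0)\phi^{\delta}_{\mathcal{A}(0)}(t)\bigr]x\bigr\|_H\leq C\,\frac{\omega(t)}{t^{\alpha}}\|x\|_H;
\]
hypothesis (\ref{A2}) is exactly the statement that $\omega(t)/t^{\alpha}\in L_p(0,\tau)$, so the perturbation lies in $L_p(0,\tau;H)$.

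\textbf{Main obstacle.} The technical heart of the argument is the trace-space characterization used in case (iii). Because $\phi_{A}$ does \emph{not} satisfy a semigroup law (as emphasized in the introduction), the sharp exponent $1-1/(\alpha p)$ cannot be borrowed verbatim from the analytic-semigroup proof but must be verified by a direct functional-calculus computation, substituting $s=t^{\alpha}\lambda$ in the spectral integral for $\|\mathcal{A}(0)\phi_{\mathcal{A}(0)}(t)x\|_H^{p}$ so as to reduce to the classical $K$-functional norm. Once this identification is settled, the perturbation step in (iii) and the two simpler cases (i), (ii) reduce to resolvent bounds already established in Section \ref{sect:2}.
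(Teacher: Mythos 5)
Your proposal is correct in substance and follows essentially the paper's route in the two delicate cases: the same splitting $R=R_0+(R-R_0)$ with $R_0x=A(0)\phi^{\delta}_{A(0)}(\cdot)x$, the same perturbation bound $\|(R-R_0)(t)x\|_H\leq C\,\omega(t)t^{-\alpha}\|x\|_H$ obtained from the resolvent identity and the form-difference estimate (so that (\ref{A2}) finishes case (iii)), and the same reduction of the frozen term to the seminorm of $D_{A(0)}(1-\frac{1}{\alpha p},p)$ by rescaling the spectral variable — the paper carries this out explicitly (contour deformation, the change of variables $\rho=rt$, $\sigma=\rho/t$, and the generalized Minkowski inequality to pull the $L_p(dt)$ norm inside the spectral integral), which is exactly the computation you only sketch in your ``main obstacle'' paragraph; you are right that it cannot simply be quoted from the semigroup setting, and your sketch is the correct verification. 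The one place you genuinely deviate is case (ii): you estimate $R$ directly by writing $\mathcal{A}(t)\phi^{\delta}_{\mathcal{A}(t)}(t)x=\mathcal{A}(t)^{1-\epsilon}\phi^{\delta}_{\mathcal{A}(t)}(t)\mathcal{A}(t)^{\epsilon}x$, which buys you a shorter argument (and avoids the decomposition altogether in (i)–(ii)), but it silently requires $\|\mathcal{A}(t)^{\epsilon}x\|_H\leq C\|x\|_{(H,\mathcal{D}(A(0)))_{\epsilon,2}}$ with $C$ independent of $t$; this is true, but it is an extra input — one first reduces to $\epsilon<1/2$ (legitimate, since smaller $\epsilon$ gives a weaker norm) and then invokes Kato's theorem that $\mathcal{D}(\mathcal{A}(t)^{\epsilon})=[H,V]_{2\epsilon}$ with constants depending only on $M,\gamma$ for $\epsilon<1/2$ — and your parenthetical ``common form domain $V$'' does not by itself justify it. The paper avoids this entirely by freezing at $t=0$ in case (ii) as well (moment inequality for the single operator $A(0)$) and absorbing $R-R_0$ with the same $\omega(t)t^{-\alpha}$ bound, which for $\alpha\leq 1/p$ is in $L_p$ because (\ref{A1}) and monotonicity force $\omega(t)\lesssim t^{\alpha/2}$. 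So: either supply the uniform fractional-domain equivalence (with the reduction to small $\epsilon$), or run (ii) through the frozen-coefficient decomposition as the paper does; with that addition your argument is complete.
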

		\begin{proof}
			To achieve this proof, we first consider the $L_p$-norm of $(R_0 x)(t):=A(0)\phi^{\delta}_{A(0)}(t)x$ for $x\in H$. For $\alpha<\frac{1}{p}$, a similar argument to (i) in Lemma \ref{Lemma2.1} implies that
			$$\|R_0 x\|_{L_p(0, \tau ; H)} \leq C\| {t^{-\alpha}} \|_ {L_p(0, \tau ; \mathbb{R})}\|x\|_H\leq C\|x\|_H.$$
			For $\alpha=\frac{1}{p}$, let $x\in D(A(0)^{\epsilon})=(H, \mathcal{D}(A(0)))_{\epsilon, 2}$ for $\epsilon\in(0,1)$. We rewrite
			$$\begin{aligned}
				( R_0 x)(t)  = A(0)^{1-\epsilon}\phi^{\delta}_{A(0)}(t)A(0)^{\epsilon}x = \int_{\Gamma}\lambda^{\alpha-1}e^{\lambda t}A(0)^{1-\epsilon}(\lambda^{\alpha}+A(t))^{-1}A(0)^{\epsilon}xd\lambda .
			\end{aligned}$$
			By virtue of the moment inequality, see e.g. \cite[Section 2.7.4]{Yagi}, for $y\in H$, we have
			$$\|A(0)^{1-\epsilon}(\lambda^{\alpha}+A(t))^{-1}y\|_H\leq \|A(0)(\lambda^{\alpha}+A(t))^{-1}y\|^{1-\epsilon}_H\|(\lambda^{\alpha}+A(t))^{-1}y\|^{\epsilon}_H\leq C\lambda^{\epsilon}.$$
			which implies that 
			$$\|(R_0 x)(t)\|_{H}\leq Ct^{\alpha(1-\epsilon)}\|x\|_{(H, \mathcal{D}(A(0)))_{\epsilon, 2}}.$$
			Therefore, it yields
			$$\begin{aligned}
				\|R_0 x\|_{L_p(0, \tau ; H)} \leq {C}\| {t^{-\alpha(1-\epsilon)}}\|_ {L_p(0, \tau ; \mathbb{R})}\|x\|_{(H, \mathcal{D}(A(0)))_{\epsilon, 2}}  \leq C\|x\|_{(H, \mathcal{D}(A(0)))_{\epsilon, 2}}.
			\end{aligned}$$
			For $\alpha>\frac{1}{p}$, choosing the contour 
			$$ 
			\Gamma'=\left\{r e^{-i\theta} ; R \leq r<\infty\right\} \cup\left\{\epsilon e^{i \varphi} ;|\varphi| \leq \theta\right\} \cup\left\{r e^{i\theta} ; R \leq r<\infty\right\},
			$$
			one can rewrite the $R_0x$ into
			$$
			\begin{aligned}
				(R_0 x)(t)
				&=\frac{e^{-\delta t}}{2\pi i}\int_{\Gamma'}A(0)\lambda^{\alpha-1}e^{\lambda t}(\lambda^{\alpha}+A(0))^{-1}xd\lambda
				\\=&\frac{e^{-\delta t}e^{i\theta}}{\pi i}\int_{\epsilon}^{\infty}A(0)(re^{i\theta})^{\alpha-1}e^{re^{i\theta} t}((re^{i\theta})^{\alpha}+A(0))^{-1}xdr
				\\&+\frac{e^{-\delta t}}{2\pi}\int_{-\theta}^{\theta}A(0)(\epsilon e^{i\varphi})^{\alpha}e^{\epsilon e^{i\varphi} t}((\epsilon e^{i\varphi})^{\alpha}+A(0))^{-1}xd\varphi.
			\end{aligned}
			$$
			Note that 
			$$
			\|\int_{-\theta}^{\theta}A(0)(\epsilon e^{i\varphi})^{\alpha}e^{\epsilon e^{i\varphi} t}((\epsilon e^{i\varphi})^{\alpha}+A(0))^{-1}xd\varphi\|_H \rightarrow 0  \text{ as  } \epsilon\rightarrow 0,
			$$
			hence, it is just sufficient to estimate
			$$
			\|\int_{0}^{\infty}A(0)(re^{i\theta})^{\alpha-1}e^{re^{i\theta} t}((re^{i\theta})^{\alpha}+A(0))^{-1}xdr\|_H\leq C\|x\|_{(H, \mathcal{D}(A(0)))_{1-\frac{1}{\alpha p}},p}.
			$$
			Observe that
			$$
			\begin{aligned}
				&((re^{i\theta})^{\alpha}+A(0))^{-1}-(r^{\alpha}+A(0))^{-1}
				\\&=((re^{i\theta})^{\alpha}-r^{\alpha})((re^{i\theta})^{\alpha}+A(0))^{-1}(r^{\alpha}+A(0))^{-1}
				\\&=(1-e^{-i\alpha\theta})(re^{i\theta})^{\alpha}((re^{i\theta})^{\alpha}+A(0))^{-1}(r^{\alpha}+A(0))^{-1},
			\end{aligned}
			$$
			which implies that
			$$
			\|A(0)((re^{i\theta})^{\alpha}+A(0))^{-1}x\|_{H}\leq (1+2C\sin\frac{\alpha\theta}{2})\|A(0)(r^{\alpha}+A(0))^{-1}x\|_{H}.
			$$
			Therefore we have
			$$
			\begin{aligned}
				\|(R_0 x)(t)\|_{H}\leq &C\int_{0}^{\infty}e^{-rt\cos\theta }\|r^{\alpha-1}A(0)(r^{\alpha}+A(0))^{-1}x\|_{H}dr.
			\end{aligned}
			$$
			Applying a change of variable $\rho=rt$ and the generalized Minkowski inequality 
			$$
			\left\{\int_{\mathbb{R}_{+}} d \tau\left|\int_{\mathbb{R}_{+}} f(\rho, t) d \rho\right|^p\right\}^{1 / p} \leq \int_{\mathbb{R}_{+}} d \rho\left\{\int_{\mathbb{R}_{+}}|f(\rho, t)|^p d t\right\}^{1 / p},
			$$
			we deduce that
			$$
			\begin{aligned}
				\| R_0 x \|_{L_p(0,\tau;H)}
				&\leq C\int_{0  }^{\infty}e^{-\rho \cos \theta}\left(\int_{0}^{\tau}\Big(\|\frac{\rho^{\alpha-1}}{t^{\alpha-1}}A(0)(\frac{\rho^{\alpha}}{t^{\alpha}}+A(0))^{-1}x\|_{H}\frac{1}{t}\Big)^p dt\right)^{\frac{1}{p}}d\rho
				\\&\leq C\int_{0}^{\infty}e^{-\rho \cos \theta}\rho^{\frac{1}{p}-1}d\rho\left(\int_{0}^{\infty}(\|\sigma^{\alpha-\frac{1}{p}}A(0)(\sigma^{\alpha}+A(0))^{-1}x\|_{H})^p \frac{d\sigma}{\sigma}\right)^{\frac{1}{p}}
				\\&=C[x]_{D_{A(0)}(1-\frac{1}{\alpha p},p)}\leq C\|x\|_{(H, \mathcal{D}(A(0)))_{1-\frac{1}{\alpha p}, p}}.
			\end{aligned}
			$$
			The inequality in second line is due to the change of variable $\sigma=\rho/t$, the third line is due to the fact
			$$
			\begin{aligned}
				\relax[x]_{D_{A(0)}(\gamma, p)}=&\left\{\int_{0}^{\infty}\left(s^\gamma\left\|A(0)(s I+A(0))^{-1} x\right\|_{H}\right)^p \frac{d s}{s}\right\}^{\frac{1}{p}}
				\\=&\left\{\int_{0}^{\infty}\left(\sigma^{\alpha\gamma}\left\|A(0)(\sigma^{\alpha} I+A(0))^{-1} x\right\|_{H}\right)^p \frac{d \sigma}{\sigma}\right\}^{\frac{1}{p}}.
			\end{aligned}
			$$
			In the sequel, we need to consider the estimate of $(R-R_0)x$. Note that
			$$
			\begin{aligned}
				&A(t)(\lambda^{\alpha}+A(t))^{-1}-
				A(0)(\lambda^{\alpha}+A(0))^{-1}
				\\&=\lambda^{\alpha}(\lambda^{\alpha}+A(t))^{-1}(A(t)-A(0))(\lambda^{\alpha}+A(0))^{-1}.
			\end{aligned}
			$$
			Using the same arguments as in \eqref{EQA}, we get
			$$
			\|\lambda^{\alpha}(\lambda^{\alpha}+A(t))^{-1}(A(t)-A(0))(\lambda^{\alpha}+A(0))^{-1}\|_{L(H)}\leq C\omega(t).
			$$
			Therefore, we have
			$$
			\begin{aligned}
				\|(R-R_0)(t)x\|_H&=\|\int_{\Gamma}\lambda^{\alpha-1}e^{\lambda t}\lambda^{\alpha}(\lambda^{\alpha}-A(t))^{-1}(A(t)-A(0))(\lambda^{\alpha}-A(0))^{-1}xd\lambda\|_H
				\\&\leq \int_{\Gamma}\omega(t)|\lambda^{\alpha-1}|e^{Re{\lambda}t}|d\lambda| \|x\|_{H}\leq C\frac{\omega(t)}{t^{\alpha}}\|x\|_H.
			\end{aligned}
			$$
			If $\alpha\leq \frac{1}{p}$, it is straightforward to obtain $(R-R_0)x\in L_p(0,\tau; H)$, while if $\alpha>\frac{1}{p}$, it follows from (\ref{A2}) that $(R-R_0)x\in L_p(0,\tau; H)$.
			The proof is completed.
		\end{proof}

		In the following, we consider the $L_p$-boundedness of the operator
		$$
		(Lf)(t)=\mathcal{A}(t)\int_{0}^{t}\psi^{\delta}_{\mathcal{A}(t)}(t-s)f(s)ds.
		$$
		Our strategy is to transform the operator $L$ into a pseudo-differential operator. Let $f \in C_c^{\infty}(0, \tau; H)$, and extend the domain of $f$ to all of $\mathbb{R}$ by defining it as zero outside the interval $(0, \tau)$. This extended function, denoted by $f_0$, belongs to the Schwarz class $\mathcal{S}(\mathbb{R}; H)$. We use $\mathcal{F}f_0$ or $\widehat{f_0}$ to represent its Fourier transform of $f_0$. It is clear that
		$$
		\begin{aligned}
			\int_{0}^{t}\psi^{\delta}_{\mathcal{A}(t)}(t-s)f(s)ds&=\int_{-\infty}^{t}e^{-\delta (t-s)}\psi_{\mathcal{A}(t)}(t-s)\int_{\mathbb{{\mathbb{R}}}}e^{is\xi}\widehat{f_0}(\xi)d\xi ds
			\\&=\int_{\mathbb{R}}\Big(\int_{-\infty}^{t}e^{-(i\xi+\delta)(t-s)}\psi_{\mathcal{\mathcal{A}}(t)}(t-s)ds\Big)\widehat{f_0}(\xi)e^{it\xi}d\xi.
		\end{aligned}
		$$
		By virtue of (ii) in Lemma \ref{Lemma2.1} we have
		$$
		\int_{-\infty}^{t}e^{-(i\xi+\delta)(t-s)}\psi_{\mathcal{\mathcal{A}}(t)}(t-s)ds=((i\xi+\delta)^{\alpha}+\mathcal{A}(t))^{-1},
		$$
		which implies that
		$$
		Lf(t)=\int_{\mathbb{R}} \mathcal{A}(t)((i\xi+\delta)^{\alpha}+\mathcal{A}(t))^{-1}\widehat{f_0}(\xi)e^{it\xi}d\xi=\mathcal{F}^{-1}(\sigma(t,\xi)\widehat{f_0}(\xi))(t),
		$$
		where
		$$
		\sigma(t,\xi)= \begin{cases}\mathcal{A}(0)((i\xi+\delta)^{\alpha}+\mathcal{A}(0))^{-1}, & \text { if } t<0, \\ \mathcal{A}(t)((i\xi+\delta)^{\alpha}+\mathcal{A}(t))^{-1}, & \text { if } 0 \leq t \leq \tau, \\ \mathcal{A}(\tau)((i\xi+\delta)^{\alpha}+\mathcal{A}(\tau))^{-1}, & \text { if } t>\tau.
		\end{cases}
		$$
		We introduce the following lemma about the $L_2$-boundedness of pseudo-differential operators, its proof can be found in Haak and Ouhabaz \cite{Haak}.
		\begin{Lemma} \label{4.4}
			Let $T f(x):=\frac{1}{(2 \pi)^n} \int_{\mathbb{R}^n} \sigma(x, \xi) \widehat{f}(\xi) e^{i x \xi} \mathrm{~d} \xi$ for $f\in\mathcal{S}\left(\mathbb{R}^n ; H\right)$. Suppose there exists a non-decreasing function $\omega:[0, \infty) \rightarrow$ $[0, \infty)$ such that for all $|\beta| \leq[n / 2]+2$, the following conditions hold:
			$$
			\left\|\partial_{\xi}^\beta \sigma(x, \xi)\right\|_{L(H)} \leq \frac{C_\beta}{(1+\xi^2)^{\beta/2}},
			$$
			and
			$$
			\left\|\partial_{\xi}^\beta \sigma(x, \xi)-\partial_{\xi}^\beta \sigma\left(x^{\prime}, \xi\right)\right\|_{L(H)} \leq \frac{C_\beta}{(1+\xi^2)^{\beta/2}} \omega\left(\left|x-x^{\prime}\right|\right),
			$$
			for some positive constant $C_\beta$. In addition, suppose that
			$$
			\int_0^1 \frac{(\omega(t))^2 }{t}\mathrm{d} t<\infty,
			$$
			then $T$ is a bounded operator on $L_2\left(\mathbb{R}^n ; H\right)$.
		\end{Lemma}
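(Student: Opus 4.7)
This is the standard $L_2$-boundedness result for pseudo-differential operators with operator-valued symbols having Dini $x$-regularity, and the proof follows a well-established template based on a Littlewood-Paley decomposition in the frequency variable combined with the Cotlar-Stein almost orthogonality lemma; the operator-valued setting poses no essential difficulty because Cotlar-Stein is a purely Hilbert space result and applies verbatim to $L_2(\mathbb{R}^n;H)$.

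The first step is to fix a smooth dyadic partition of unity $1=\sum_{j\geq 0}\varphi_j(\xi)$ with $\operatorname{supp}\varphi_0\subset\{|\xi|\leq 2\}$ and $\operatorname{supp}\varphi_j\subset\{2^{j-1}\leq|\xi|\leq 2^{j+1}\}$ for $j\geq 1$, and to introduce the operators $T_j$ with symbols $\sigma_j(x,\xi)=\varphi_j(\xi)\sigma(x,\xi)$, so that $T=\sum_j T_j$ on $\mathcal{S}(\mathbb{R}^n;H)$. Writing $T_jf(x)=\int K_j(x,x-y)f(y)\,dy$ with $K_j(x,z)$ the oscillatory integral of $\sigma_j(x,\xi)$ against $e^{iz\xi}$, exactly $[n/2]+2$ integrations by parts in $\xi$ using hypothesis~(1) and the compact support of $\varphi_j$ yield the $L(H)$-valued kernel bound
\[
\|K_j(x,z)\|_{L(H)}\leq C_N\,\frac{2^{jn}}{(1+2^j|z|)^N}, \qquad N=[n/2]+2,
\]
from which Schur's test gives $\sup_j\|T_j\|_{L_2\to L_2}<\infty$. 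This uniform bound is, however, not summable in $j$, so almost-orthogonality is needed.

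The second step is the Cotlar-Stein estimate. One controls $\|T_j T_\ell^*\|$ and $\|T_j^* T_\ell\|$ through their kernels, which are double oscillatory integrals carrying the factor $\varphi_j(\xi)\varphi_\ell(\eta)$ from the composition. For $|j-\ell|$ large, the disjoint supports of $\varphi_j$ and $\varphi_\ell$ together with integration by parts in the phase produce decay $2^{-N|j-\ell|}$ using~(1). For $|j-\ell|$ moderate, where frequency separation is unavailable, smallness is extracted from~(2) by inserting $\sigma(x,\xi)-\sigma(y,\xi)$ inside the composition; this introduces a factor $\omega(|x-y|)$ that, after integration against the kernel localized at scale $2^{-\min(j,\ell)}$, converts to an operator norm of order $\omega(2^{-\min(j,\ell)})$. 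Since $\omega$ is non-decreasing, the $L_2$-Dini condition $\int_0^1\omega(t)^2\,dt/t<\infty$ is equivalent, after dyadic discretization, to $\sum_j\omega(2^{-j})^2<\infty$, and this is exactly what is needed to make the Cotlar-Stein sums $\sup_j\sum_\ell\|T_jT_\ell^*\|^{1/2}+\sup_\ell\sum_j\|T_j^*T_\ell\|^{1/2}$ finite, yielding $\|T\|_{L_2(\mathbb{R}^n;H)\to L_2(\mathbb{R}^n;H)}<\infty$.

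The \textbf{main obstacle} lies in the almost-orthogonality estimate: one must simultaneously exploit frequency separation (for large $|j-\ell|$) and the Dini modulus in $x$ (for small $|j-\ell|$) within a single bound that is summable against the Cotlar-Stein index, and one must do so in the operator-valued setting where the adjoint of an $x$-dependent multiplier is not a genuine pseudo-differential operator but an amplitude operator depending on both endpoint variables. All of this is carried out in detail by Haak and Ouhabaz~\cite{Haak}, to whose argument I would defer.
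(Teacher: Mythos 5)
The paper does not actually prove this lemma: it states it and refers the reader to Haak and Ouhabaz \cite{Haak} for the proof, so there is no in-paper argument to compare yours against. Your proposal is a reasonable outline of the standard route (dyadic decomposition in $\xi$, uniform bounds on the pieces, Cotlar--Stein almost orthogonality with the Dini condition supplying $\sum_j \omega(2^{-j})^2<\infty$), and it ends by deferring to the same reference the paper cites, so in substance the two treatments coincide.

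One concrete slip worth fixing if you were to write this out: the pointwise kernel bound $\|K_j(x,z)\|_{L(H)}\leq C\,2^{jn}(1+2^j|z|)^{-N}$ with $N=[n/2]+2$ is not integrable in $z$ once $n\geq 3$ (one needs $N>n$ for Schur's test), so the claimed uniform bound $\sup_j\|T_j\|_{L_2\to L_2}<\infty$ does not follow as stated. The standard repair, and the reason $[n/2]+2$ derivatives suffice, is to avoid pointwise kernel estimates altogether: apply Plancherel in $z$ to get the weighted $L^2$ bound $\int (1+2^j|z|)^{2N}\|K_j(x,z)\|_{L(H)}^2\,dz\lesssim 2^{jn}$ from the $\xi$-derivative hypotheses, and then Cauchy--Schwarz against the weight $(1+2^j|z|)^{-N}$ (which is square-integrable since $2N>n$) yields $\sup_x\int\|K_j(x,z)\|_{L(H)}\,dz\lesssim 1$. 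For $n=1$, the only case the paper actually uses, your version is fine as written. The Cotlar--Stein step is where the real work lies, as you correctly identify, and deferring its execution to \cite{Haak} is consistent with what the paper itself does.
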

		\begin{Lemma}\label{4.5}
			Operator $L$ is bounded on $L_2(0, \tau ; H)$.
		\end{Lemma}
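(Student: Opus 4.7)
The plan is to realize $L$ as the pseudo-differential operator with operator-valued symbol $\sigma(t,\xi)=\mathcal{A}(t)((i\xi+\delta)^{\alpha}+\mathcal{A}(t))^{-1}$ (extended constantly in $t$ outside $[0,\tau]$) already derived above, and then to invoke Lemma \ref{4.4} in dimension $n=1$. Since $f\in L_2(0,\tau;H)$ may be approximated by elements of the dense subspace $C_c^{\infty}(0,\tau;H)$, whose zero-extension $f_0$ lies in $\mathcal{S}(\mathbb{R};H)$, it suffices to verify the symbol hypotheses of Lemma \ref{4.4} for multi-indices $|\beta|\leq 2$ together with the scalar Dini condition; the $L_2(\mathbb{R};H)$-boundedness that Lemma \ref{4.4} produces then restricts to the desired $L_2(0,\tau;H)$-bound.

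For the pure decay estimate in $\xi$, I write $\lambda:=i\xi+\delta$ and $R(t,\xi):=(\lambda^\alpha+\mathcal{A}(t))^{-1}$, so that $\sigma(t,\xi)=I-\lambda^{\alpha}R(t,\xi)$. A short induction on $\beta$ using $\partial_\xi R=-i\alpha\lambda^{\alpha-1}R^2$ and the product rule expresses $\partial_\xi^\beta\sigma(t,\xi)$, for $\beta\geq 1$, as a finite linear combination of terms of the form $c\,\lambda^{k\alpha-\beta}R(t,\xi)^k$ with $1\leq k\leq\beta+1$. Rewriting $\lambda^{k\alpha-\beta}R^k=\lambda^{-\beta}(\lambda^{\alpha}R)^k$ and exploiting the uniform bound $\|\lambda^{\alpha}R(t,\xi)\|_{L(H)}\leq C$ that follows from Proposition \ref{proposition2.1}(ii), one obtains $\|\partial_\xi^\beta\sigma(t,\xi)\|_{L(H)}\leq C_\beta|\lambda|^{-\beta}\leq C_\beta(1+\xi^2)^{-\beta/2}$ provided $\delta\geq 1$.

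For the $t$-regularity of these derivatives, I would feed the resolvent identity $R(t,\xi)-R(s,\xi)=R(t,\xi)(\mathcal{A}(s)-\mathcal{A}(t))R(s,\xi)$ into each factor of the decomposition from the previous step via a telescoping argument, so that $\partial_\xi^\beta\sigma(t,\xi)-\partial_\xi^\beta\sigma(s,\xi)$ becomes a finite sum in which every summand contains exactly one factor $\mathcal{A}(s)-\mathcal{A}(t)$ flanked by two resolvents. The duality computation already used in \eqref{EQA} gives
$$
|(R(t,\xi)(\mathcal{A}(s)-\mathcal{A}(t))R(s,\xi)u,v)_H|\leq C\omega(|t-s|)\|R(s,\xi)u\|_V\|R(t,\xi)^{*}v\|_V,
$$
while Proposition \ref{proposition2.1}(iv), applied with $\lambda^{\alpha}$ in place of $\lambda$, yields $\|R(t,\xi)\|_{L(H,V)}\leq C|\lambda|^{-\alpha/2}$. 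The two $V$-norms thus contribute an extra $|\lambda|^{-\alpha}$ that precisely replaces the two $L(H)$-resolvent bounds used before, while the remaining scalar factors reproduce the $(1+\xi^2)^{-\beta/2}$ decay; this yields the required modulus $C_\beta(1+\xi^2)^{-\beta/2}\omega(|t-s|)$.

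Finally, the scalar Dini condition $\int_0^1\omega(r)^2/r\,dr<\infty$ required by Lemma \ref{4.4} follows from \eqref{A1}: because $\omega$ is continuous and non-decreasing, hence bounded by some $M$ on $[0,\tau]$, one has $\omega(r)^2/r\leq M\tau^{\alpha/2}\cdot\omega(r)/r^{1+\alpha/2}$, whose integral is finite by \eqref{A1}. The main technical hurdle lies in the third step: one must keep careful track, after repeated differentiation in $\xi$, of which resolvents are consumed to supply the $V$-norms required by the form estimate and which remain available to furnish $L(H)$ decay. A clean combinatorial bookkeeping based on the resolvent identity and the product rule is needed to guarantee that both requirements can always be met simultaneously with enough surplus $|\lambda|^{-1}$ factors to reach the full $(1+\xi^2)^{-\beta/2}$ bound.
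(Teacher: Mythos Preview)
Your overall strategy---realize $L$ as the pseudo-differential operator with symbol $\sigma$ and then invoke Lemma \ref{4.4}---is exactly the paper's, and your verification of the scalar Dini condition from \eqref{A1} is correct. The difference lies in how the symbol estimates are obtained.

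You proceed by explicitly differentiating $\sigma$ in $\xi$ via $\partial_\xi R=-i\alpha\lambda^{\alpha-1}R^2$, express $\partial_\xi^\beta\sigma$ as a linear combination of $\lambda^{k\alpha-\beta}R^k$, and then handle the $t$-difference by telescoping $R(t)^k-R(s)^k$ and inserting the resolvent identity. This is correct: in each summand $R(t)^{j+1}(\mathcal{A}(s)-\mathcal{A}(t))R(s)^{k-j}$ there is always at least one resolvent on each side of $\mathcal{A}(s)-\mathcal{A}(t)$, so the form estimate \eqref{EQA} together with Proposition \ref{proposition2.1}(iv) supplies a factor $\omega(|t-s|)|\lambda|^{-\alpha}$, while the remaining $k-1$ resolvents furnish $|\lambda|^{-\alpha(k-1)}$; combined with $\lambda^{k\alpha-\beta}$ this gives the required $(1+\xi^2)^{-\beta/2}\omega(|t-s|)$. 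The bookkeeping you flag as the main hurdle thus works out cleanly.

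The paper, by contrast, avoids this computation entirely. It first proves the zeroth-order bounds $\|\sigma(t,\xi)\|_{L(H)}\leq M$ and $\|\sigma(t,\xi)-\sigma(s,\xi)\|_{L(H)}\leq C\omega(|t-s|)$ (the latter via the identity $\mathcal{A}(t)R(t)-\mathcal{A}(s)R(s)=\lambda^{\alpha}R(t)(\mathcal{A}(t)-\mathcal{A}(s))R(s)$ and the same form estimate), observes that these hold uniformly as $\xi$ ranges over a complex sector of opening proportional to $|\lambda|$, and then applies Cauchy's integral formula for derivatives of holomorphic (operator-valued) functions on a circle of radius $\sim|\lambda|$ to obtain all higher-order estimates in one stroke. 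This is shorter and sidesteps the induction and telescoping altogether; your route is more elementary but requires the explicit combinatorics you describe.
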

		\begin{proof}
			Since $(i\xi+\delta)^{\alpha}\in \rho(-\mathcal{A}(t))$, there exists a constant $M>0$ such that
			$$
			\|\sigma(x, \xi)\|_{L(H)} \leq M.
			$$
			By using Cauchy's formula for derivative of holomorphic function, we get
			$$
			\|\partial_{\xi}^k\sigma(x, \xi)\|_{L(H)}\leq \frac{C_{\theta,k}}{(1+\xi^2)^{k/2}}.
			$$
			Recall that
			$$
			\begin{aligned}
				&\mathcal{A}(t)(\lambda^{\alpha}+\mathcal{A}(t))^{-1}-\mathcal{A}(s)(\lambda^{\alpha}+\mathcal{A}(s))^{-1}
				\\&=\lambda^{\alpha}(\lambda^{\alpha}+\mathcal{A}(t))^{-1}(\mathcal{A}(t)-\mathcal{A}(0))(\lambda^{\alpha}+\mathcal{A}(s))^{-1},
			\end{aligned}
			$$
			and for $\lambda\in\rho(-\mathcal{A}(\cdot))$,
			$$
			\|\lambda^{\alpha}(\lambda^{\alpha}+\mathcal{A}(t))^{-1}(\mathcal{A}(t)-\mathcal{A}(0))(\lambda^{\alpha}+\mathcal{A}(s))^{-1}\|\leq C\omega(t-s) ,
			$$ 
			the following estimate holds
			$$
			\|\sigma(x, \xi)-\sigma(y, \xi)\|_{L(H)}\leq C\omega(x-y),
			$$
			and particularly
			$$
			\|\partial_{\xi}^k\sigma(x, \xi)-\partial_{\xi}^k\sigma(y, \xi)\|_{L(H)}\leq C_{\theta,k}\frac{\omega(x-y)}{(1+\xi^2)^{k/2}}.
			$$
			The proof is completed.
		\end{proof}
		\begin{Lemma}\label{Lemma4.6} 
			Operator $L$ is bounded on $L_p(0, \tau ; H)$ for all $p \in(1, \infty)$.
		\end{Lemma}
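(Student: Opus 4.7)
The plan is to extend the $L_2$-boundedness of $L$ established in Lemma \ref{4.5} to $L_p$-boundedness for all $p\in(1,\infty)$ via the vector-valued Calderón-Zygmund (Benedek-Calderón-Panzone) theorem. Writing $L$ as a singular integral with operator-valued kernel, it suffices to verify a Hörmander-type integral condition on the kernel; combined with the $L_2$-boundedness this gives weak-type $(1,1)$ bounds, and interpolation and duality then cover the full range $p\in(1,\infty)$.

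Concretely, we view $L$ as
$$
(Lf)(t)=\int_{-\infty}^{t} K(t,s)f(s)\,ds,\qquad K(t,s):=\mathbf{1}_{\{s<t\}}\,\mathcal{A}(t)\psi^{\delta}_{\mathcal{A}(t)}(t-s),
$$
after extending $f$ by zero outside $(0,\tau)$. The first task is to produce pointwise kernel estimates in $L(H)$. The identity $\mathcal{A}(t)(\lambda^{\alpha}+\mathcal{A}(t))^{-1}=I-\lambda^{\alpha}(\lambda^{\alpha}+\mathcal{A}(t))^{-1}$ combined with Proposition \ref{proposition2.1}(ii) yields $\|\mathcal{A}(t)(\lambda^{\alpha}+\mathcal{A}(t))^{-1}\|_{L(H)}\leq C$ uniformly for $\lambda\in\Gamma$. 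Inserting this into the defining contour integral for $\mathcal{A}(t)\psi_{\mathcal{A}(t)}(r)$ and choosing $R=r^{-1}$ in $\Gamma$ produces
$$
\|K(t,s)\|_{L(H)}\leq \frac{C\,e^{-\delta(t-s)}}{t-s}.
$$
Differentiating under the integral (which brings out an extra factor of $\lambda$) and reoptimizing the contour radius yields
$$
\|\partial_{s}K(t,s)\|_{L(H)}\leq \frac{C\,e^{-\delta(t-s)}}{(t-s)^{2}}.
$$

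With these bounds the Hörmander condition follows by a standard computation: for $s>s'$ (the reverse case is symmetric) we write
$$
K(t,s)-K(t,s')=-\int_{s'}^{s}\partial_{\sigma}K(t,\sigma)\,d\sigma,
$$
and whenever $t-s'>2|s-s'|$ we have $t-\sigma\gtrsim t-s'$ uniformly in $\sigma\in[s',s]$, so
$$
\int_{t-s'>2|s-s'|}\|K(t,s)-K(t,s')\|_{L(H)}\,dt\leq C|s-s'|\int_{2|s-s'|}^{\infty}\frac{du}{u^{2}}\leq C,
$$
uniformly in $s,s'$. Together with the $L_{2}$-boundedness from Lemma \ref{4.5}, the Benedek-Calderón-Panzone theorem for singular integrals with Banach-space-valued kernels (see e.g.\ \cite{Hytonen}) extends $L$ to a bounded operator on $L_{p}(0,\tau;H)$ for all $p\in(1,2]$; a parallel argument applied to the adjoint kernel $K^{\ast}(s,t)$ (which enjoys the same type of estimates by symmetry of the setup) covers $p\in[2,\infty)$.

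The main obstacle is obtaining the resolvent estimate $\|\mathcal{A}(t)(\lambda^{\alpha}+\mathcal{A}(t))^{-1}\|_{L(H)}\leq C$ rather than only in $L(V')$, and then carrying out the contour deformation in $L(H)$ rigorously; this rests on the compatibility between $\mathcal{A}(t)$ and its $H$-part $A(t)$ on $D(A(t))$, so that the contour integral representation is equally valid as an operator on $H$. Once this consistency is in place, the derivative bound $\|\partial_{s}K(t,s)\|_{L(H)}\leq C(t-s)^{-2}$ and the Hörmander integral estimate are both routine, and the Calderón-Zygmund machinery does the rest.
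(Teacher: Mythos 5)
Your overall strategy (view $L$ as a singular integral with operator-valued kernel, verify H\"ormander conditions, combine with the $L_2$-bound of Lemma \ref{4.5}, then interpolate and dualize) is exactly the paper's route, and your treatment of the second-variable condition \eqref{4.6} — kernel size $\|K(t,s)\|_{L(H)}\leq C(t-s)^{-1}$, derivative bound $C(t-s)^{-2}$ from the extra factor of $\lambda$ in the contour integral, then the standard $\int_{2|s-s'|}^{\infty}u^{-2}\,du$ computation — is essentially the paper's argument for that half.

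The gap is in the sentence claiming that the adjoint kernel ``enjoys the same type of estimates by symmetry of the setup.'' The setup is not symmetric: $K(t,s)=\mathbf{1}_{\{s<t\}}\mathcal{A}(t)\psi^{\delta}_{\mathcal{A}(t)}(t-s)$ depends on the first variable not only through $t-s$ but also through the operator $\mathcal{A}(t)$ itself. The H\"ormander condition for the adjoint, i.e. \eqref{4.7}, requires controlling
$\mathcal{A}(s)\psi_{\mathcal{A}(s)}(s-t)-\mathcal{A}(s')\psi_{\mathcal{A}(s')}(s'-t)$, and after splitting off the translation part one is left with
$\mathcal{A}(s)\psi_{\mathcal{A}(s)}(s-t)-\mathcal{A}(s')\psi_{\mathcal{A}(s')}(s-t)$, whose $L(H)$-norm cannot be bounded by size or $s$-derivative estimates of a fixed kernel; it is controlled only through the variation of the form, via the resolvent-difference identity and the bound $C\,\omega(|s-s'|)/(s-t)$, and the resulting integral is finite precisely because $\omega(r)/r$ is integrable (a consequence of hypothesis \eqref{A1}). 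Your proposal never invokes $\omega$ or condition \eqref{A1}, so the half of the argument giving $p\in[2,\infty)$ (and, in the paper's formulation, the weak $(1,1)$ bound for $L_*$) is unsupported. This omission is not cosmetic: without a continuity assumption of Dini type on $t\mapsto a(t;\cdot,\cdot)$ the first-variable regularity of the kernel genuinely fails, which is exactly where the hypothesis of Theorem \ref{T1} enters this lemma. To repair the proof, insert the estimate
$\|\lambda^{\alpha}(\lambda^{\alpha}+\mathcal{A}(s))^{-1}(\mathcal{A}(s)-\mathcal{A}(s'))(\lambda^{\alpha}+\mathcal{A}(s'))^{-1}\|_{L(H)}\leq C\omega(|s-s'|)$ and carry out the integral in $t$ as in the paper's treatment of the term $I_1$.
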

		\begin{proof}
			The operator $L$ can be regarded as a singular integral operator whose kernel is
			$$
			K(t, s)=\mathbf{1}_{\{0 \leq s \leq t \leq \tau\}} \mathcal{A}(t)\psi_{A(t)}(t-s).
			$$
			We aim to show that $L$ and $L_*$ belong to the weak (1,1) type.  Following this, the Marcinkiewicz interpolation theorem and the $L_2$-boundedness in Lemma \ref{4.5} will conclude that operator $L$ is bounded on $L_p(0, \tau ; H)$ for all $p \in(1, \infty)$. As established in the literature (see, for instance, \cite[Theorems III.1.2, III.1.3]{Francia}), both $L$ and $L_*$ are operator of weak $(1,1)$ type 
 provided that the associated kernel $K(t,s)$ satisfies the H\"{o}rmander-type integral condition
			\begin{equation}\label{4.6}
				\int_{|t-s| \geq 2|s^{\prime}-s|}\|K(t, s)-K(t, s^{\prime})\|_{L(H)} d t \leq C,
			\end{equation}
			and
			\begin{equation}\label{4.7}
				\int_{|t-s| \geq 2|s^{\prime}-s|}\|K(s, t)-K(s^{\prime}, t)\|_{L(H)} d t \leq C,
			\end{equation}
			for some constant $C>0$ independent of $s, s^{\prime} \in(0, \tau)$.
			
			Let us first consider the inequality in \eqref{4.6}. 
			By Fubini' theorem, one can write
			$$
			\begin{aligned}
				\psi_{A(t)}(t-s)-\psi_{A(t)}(t-s^{\prime})&=\int_{\Gamma}(e^{(t-s)\lambda}-e^{(t-s^{\prime})\lambda})(\lambda^{\alpha}+\mathcal{A}(t))^{-1}d\lambda
				\\&=\int_{\Gamma}\left(\int_{t-s}^{t-s^{\prime}}\lambda e^{r \lambda}dr\right)(\lambda^{\alpha}+\mathcal{A}(t))^{-1}d\lambda
				\\&=\int_{t-s}^{t-s^{\prime}}\left(\int_{\Gamma}\lambda e^{r \lambda}(\lambda^{\alpha}+\mathcal{A}(t))^{-1}d\lambda\right)dr.
			\end{aligned}
			$$
			By calculation, it yields
			$$
			\begin{aligned}
				\int_{\Gamma}\lambda e^{r \lambda}\mathcal{A}(t)(\lambda^{\alpha}+\mathcal{A}(t))^{-1}d\lambda
				\leq C\int_{\Gamma}|\lambda| e^{r \operatorname{Re}\lambda}|d\lambda|
				\leq Cr^{-2}.
			\end{aligned}
			$$
			Then we obtain
			$$
			\|\mathcal{A}(t)(\psi_{\mathcal{A}(t)}(t-s)-\psi_{\mathcal{A}(t)}(t-s^{\prime}))\|_{L(H)}\leq C\Big|\int_{t-s}^{t-s^{\prime}}r^{-2}dr\Big|= C\Big|\frac{1}{t-s^{\prime}}-\frac{1}{t-s}\Big|.
			$$
			Therefore we have
			$$
			\begin{aligned}
				& \int_{|t-s| \geq 2|s^{\prime}-s|}\|K(t, s)-K(t, s^{\prime})\|_{L(H)} \mathrm{d} t \\
				&  =\int_{|t-s| \geq 2|s^{\prime}-s|}\|\mathcal{A}(t) (\psi_{\mathcal{A}(t)}(t-s)-\psi_{\mathcal{A}(t)}(t-s^{\prime}))\|_{L(H)} \mathrm{d} t \\
				&  \leq C \int_{|t-s| \geq 2|s^{\prime}-s|}\Big|  \frac{1}{t-s^{\prime}}-\frac{1}{t-s}\Big| \mathrm{d} t\leq C\log 2.
			\end{aligned}
			$$
			We now consider \eqref{4.7}. We observe that
			$$
			\begin{aligned}
				& \int_{|t-s| \geq 2|s^{\prime}-s|}\|K(s, t)-K(s^{\prime}, t)\|_{L(H)} \mathrm{d} t \\
				& =\int_{|t-s| \geq 2|s^{\prime}-s|}\|\mathcal{A}(s) \psi_{\mathcal{A}(s)}(s-t)-(\mathcal{A}(s^{\prime})) \psi_{\mathcal{A}(s^{\prime})}(s^{\prime}-t)\|_{L(H)} \mathrm{d} t \\
				& \leq \int_{|t-s| \geq 2|s^{\prime}-s|}\|\mathcal{A}(s) \psi_{\mathcal{A}(s)}(s-t)-(\mathcal{A}(s^{\prime})) \psi_{\mathcal{A}(s^{\prime})}(s-t)\|_{L(H)} \mathrm{d} t \\
				& \quad+\int_{|t-s| \geq 2|s^{\prime}-s|}\|(\mathcal{A}(s^{\prime})) \psi_{\mathcal{A}(s^{\prime})}(s-t)-(\mathcal{A}(s^{\prime}))\psi_{\mathcal{A}(s^{\prime})}(s^{\prime}-t)\|_{L(H)} \mathrm{d} t=: I_1+I_2.
			\end{aligned}
			$$
			The second term $I_2$ can be addressed exactly as the proof of \eqref{4.6}. For the first term $I_1$, writing
			$$
			\begin{aligned}
				\mathcal{A}(s)\psi_{\mathcal{A}(s)}(s-t)&-(\mathcal{A}(s^{\prime}))\psi_{\mathcal{A}(s^{\prime})}(s-t)
				\\&=\frac{1}{2 \pi i} \int_{\Gamma} \lambda^{\alpha}e
				^{\lambda (s-t)}(\lambda^{\alpha}+\mathcal{A}(s))^{-1}-(\lambda^{\alpha}+ \mathcal{A}(s^{\prime}))^{-1} \mathrm{d} \lambda.
			\end{aligned}
			$$
			By a similar argument in Lemma \ref{4.3}, we have 
			$$
			\begin{aligned}
				\|\mathcal{A}(s)\psi_{\mathcal{A}(s)}(s-t)-\mathcal{A}(s^{\prime})\psi_{\mathcal{A}(s^{\prime})}(s-t)\|_{L(H)} & \leq C \int_{\Gamma}  e^{\operatorname{Re}\lambda(s-t)} \omega(|s-s^{\prime}|) |d\lambda| \\
				& \leq  \frac{C\omega(|s-s^{\prime}|)}{s-t}.
			\end{aligned}
			$$
			Therefore, we obtain
			$$
			\begin{aligned}
				\int_{|t-s| \geq 2|s^{\prime}-s|}\|\mathcal{A}(s)\psi_{\mathcal{A}(s)}(s-t)&-\mathcal{A}(s^{\prime})\psi_{\mathcal{A}(s^{\prime})}(s-t)\|_{L(H)} \mathrm{d} t \\&\leq C \int_{|t-s| \geq 2|s^{\prime}-s|} \frac{\omega(|s-s^{\prime}|)}{s-t} \mathrm{~d} t.
			\end{aligned}
			$$
			In the case of $s\geq s'$, we have 
			$$
			\int_{|t-s| \geq 2|s^{\prime}-s|}\frac{\omega(|s-s^{\prime}|)}{s-t} \mathrm{~d} t=\int_{0}^{2s'-s}\frac{\omega(s-s^{\prime})}{s-t} \mathrm{~d} t\leq \int_{2s-2s'}^{s}\frac{\omega(r)}{r} \mathrm{~d} r\leq C,
			$$
			where applied the fact that $s-s^{\prime}\leq s-t$ and $\omega$ is non-decreasing to write the first inequality. The case $s^{\prime}>s$ is treated similarly. The proof is completed.
		\end{proof}
		We now present the main proof of the Theorem \ref{T1}.
		\begin{proof}[Proof of Theorem \ref{T1}]
			We shall first consider the case for $f \in C_c^{\infty}(0, \tau ; H)$ and $u_0\in(H, \mathcal{D}(A(0)))_{1-\frac{1}{\alpha p}, p}$ for $\alpha>\frac{1}{p}$. 
			
			From Lemma \ref{4.1}, it is clear that
			$(I-Q) A(\cdot) u^{\delta}(\cdot)=Lf^{\delta}(\cdot)+Ru_0.$ 
			Lemma \ref{4.2} allows us to rewrite $A(\cdot) u^{\delta}(\cdot)$ into 
			$$
			A(\cdot) u^{\delta}(\cdot)=(I-Q)^{-1}(L f^{\delta}(\cdot)+Ru_0).
			$$
			Using the $L_p$-boundedness of $(I-Q)^{-1}$ in Lemma \ref{4.2}, together with the results of Lemma \ref{4.3} and Lemma \ref{4.5}, we obtain
			$$
			\begin{aligned}
				\|A(\cdot)u(\cdot)\|_{L_p(0,\tau;H)}&\leq C\|A(\cdot)u^{\delta}(\cdot)\|_{L_p(0,\tau;H)}\leq C\|L f^{\delta}(\cdot)+Ru_0\|_{L_p(0,\tau;H)}
				\\&\leq C(\|f\|_{L_p(0,\tau;H)}+\|u_0\|_{(H, \mathcal{D}(A(0)))_{1-\frac{1}{\alpha p}, p}}).
			\end{aligned}
			$$
			From the boundedness of $A(t)^{-1}$ and problem (FP), we also have  
			$$
			\|u \|_{L_p(0,\tau;H)}+\|\partial_t^{\alpha} u \|_{L_p(0,\tau;H)}\leq C(\|f\|_{L_p(0,\tau;H)}+\|u_0\|_{(H, \mathcal{D}(A(0)))_{1-\frac{1}{\alpha p}, p}}).
			$$
			We thus establish the main estimate \eqref{Est}.
			
			Now, let $f\in L_p(0, \tau ; H)$ and suppose $f_n$ is a sequence in $C_c^{\infty}(0, \tau ; H)$ that converges to $f$ in $L_p(0, \tau ; H)$ and pointwise almost everywhere. Let $u_n$ represent the solution to the problem (FP) with non-homogeneous term $f_n$. We apply \eqref{Est} to $u_n-u_m$ and observe that there exists $u\in L_p(0, \tau ; H)$ with $\partial_t^{\alpha} u \in L_p(0, \tau ; H)$ and $v \in L_p(0, \tau ; H)$ such that
			$$
			u_n \xrightarrow{L_p} u \quad \partial_t^{\alpha} u_n \xrightarrow{L_p} \partial_t^{\alpha} u \quad \text { and } \quad A(\cdot) u_n(\cdot) \xrightarrow{L_p} v.
			$$
			By choosing a subsequence, these limits can be assumed to hold pointwise almost everywhere. Since operator $A(t)$ is closed for an each fixed $t$, it follows that $v(\cdot)=A(\cdot) u(\cdot)$. Taking the limits in the equation
			$$
			\partial_t^{\alpha} u_n(t)+A(t) u_n(t)=f_n(t), \quad u_n(0)=u_0,
			$$
			implies that
			$$
			\partial_t^{\alpha} u(t)+A(t) u(t)=f(t), \quad u(0)=u_0
			$$
			for a.e. $t \in(0, \tau)$, which is in the $L_p$ sense. Therefore, $u$ can be identified as a solution to problem (FP). Moreover, \eqref{Est} is satisfied for $u$ by convergence, and the uniqueness is derived from the priori estimate \eqref{Est}. The case of $\alpha\leq \frac{1}{p}$ can be proved by the same arguments immediately.
		\end{proof}
		\section{A non-symmetric counterexample} \label{sect:5}
		
		In this section, we construct non-symmetric non-autonomous forms that fail to have maximal regularity.
		We first introduce a fundamental special function that plays a key role in our analysis. The Mittag-Leffler function, for $0<\alpha<1$ and $\beta\in \mathbb{C}$ defined by
		$$
		E_{\alpha,\beta}(z)=\sum_{k=0}^{\infty} \frac{z^k}{\Gamma(\alpha k+\beta)}, \quad z \in \mathbb{C},
		$$
		which satisfies the properties $$
		\partial_t^{\alpha}E_{\alpha,1}(zt^{\alpha})=z E_{\alpha,1}(zt^{\alpha}),~~\text{ and }~ \frac{d}{dt}E_{\alpha,1}(zt^{\alpha})=zt^{\alpha-1}E_{\alpha,\alpha}(zt^{\alpha}).
		$$
	    For $0<\alpha<1$ and $|\arg z| \leq \frac{\pi \alpha}{2}$,
		the asymptotic expansion of $E_{\alpha,\beta}(z)$ for any positive integer $m$ is given by
		$$
		E_{\alpha, \beta}(z)=\frac{1}{\alpha} z^{(1-\beta) / \alpha} \mathrm{e}^{\mathrm{z}^{1 / \alpha}}-\sum_{k=1}^m \frac{z^{-k}}{\Gamma(\beta-k \alpha)}+O\left(|z|^{-m-1}\right),|z| \rightarrow \infty.
		$$
		The content above can be found in \cite[section 4.5.2.]{Zhou}.
		\begin{Lemma}\label{Lemma5.1}
			For all $\mu\in \mathbb{R_+}$ and $0< s<t\leq \tau$, it holds
			\begin{enumerate}[label=(\roman*)]
				\item $|E_{\alpha,1}(i^{\alpha}\mu)|\leq M$ for some $M>0$.
				\item $|E_{\alpha,1}(i^{\alpha}t^{\alpha} \mu)-E_{\alpha,1}(i^{\alpha}s^{\alpha} \mu)| \leq C(\mu(t-s)^{\alpha}+\mu^{\frac{1}{\alpha}}(t-s))$ for some $C>0$.
			\end{enumerate}
		\end{Lemma}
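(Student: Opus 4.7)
The plan is to base both assertions on the asymptotic expansion of $E_{\alpha,\beta}$ recalled just above the lemma, combined with the derivative identity $\frac{d}{dt}E_{\alpha,1}(zt^\alpha)=z t^{\alpha-1}E_{\alpha,\alpha}(zt^\alpha)$. In both parts one sets $z=i^\alpha\mu$, which lies exactly on the boundary ray $|\arg z|=\pi\alpha/2$ where the cited expansion is applicable.

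For (i), I would apply the expansion with $\beta=1$ to $z=i^\alpha\mu$. Since $z^{1/\alpha}=i\mu^{1/\alpha}$ satisfies $|e^{z^{1/\alpha}}|=1$, the leading term $\frac{1}{\alpha}e^{i\mu^{1/\alpha}}$ has modulus $1/\alpha$, while the finite sum of negative powers and the $O(\mu^{-m-1})$ remainder vanish as $\mu\to\infty$. Continuity of $E_{\alpha,1}$ at $0$, with $E_{\alpha,1}(0)=1$, handles bounded $\mu$. Combining the two regimes yields the uniform bound asserted in (i).

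For (ii), I would use the derivative identity with $z=i^\alpha\mu$ to write
$$
E_{\alpha,1}(i^\alpha\mu t^\alpha)-E_{\alpha,1}(i^\alpha\mu s^\alpha)=\int_s^t i^\alpha\mu\, r^{\alpha-1}E_{\alpha,\alpha}(i^\alpha\mu r^\alpha)\,dr.
$$
Then I would apply the expansion with $\beta=\alpha$ to $w:=i^\alpha\mu r^\alpha$. Here the $k=1$ coefficient $1/\Gamma(0)$ vanishes, $|e^{w^{1/\alpha}}|=|e^{i\mu^{1/\alpha}r}|=1$, and the leading term has modulus $\tfrac{1}{\alpha}(\mu r^\alpha)^{(1-\alpha)/\alpha}$. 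Together with continuity of $E_{\alpha,\alpha}$ at the origin this gives
$$
|E_{\alpha,\alpha}(i^\alpha\mu r^\alpha)|\leq C\bigl(1+(\mu r^\alpha)^{(1-\alpha)/\alpha}\bigr),
$$
hence $|i^\alpha\mu r^{\alpha-1}E_{\alpha,\alpha}(i^\alpha\mu r^\alpha)|\leq C(\mu r^{\alpha-1}+\mu^{1/\alpha})$. Integrating over $[s,t]$ produces $\tfrac{C}{\alpha}\mu(t^\alpha-s^\alpha)+C\mu^{1/\alpha}(t-s)$; the subadditivity $t^\alpha-s^\alpha\leq(t-s)^\alpha$ for $\alpha\in(0,1)$ then delivers exactly the bound in (ii).

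The main technical point I expect is verifying that the asymptotic formula from \cite[Section 4.5.2]{Zhou} is valid \emph{uniformly} up to the boundary ray $|\arg z|=\pi\alpha/2$, and not only in its open interior; this underlies the $|e^{z^{1/\alpha}}|=1$ cancellation that makes both estimates tight. Once this uniformity is granted, interpolating between the large-$|z|$ asymptotic regime and the continuity-at-zero regime is routine, and the only remaining manipulation is the elementary concavity inequality $t^\alpha-s^\alpha\leq(t-s)^\alpha$.
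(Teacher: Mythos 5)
Your proposal is correct and follows essentially the same route as the paper: part (i) via the asymptotic expansion of $E_{\alpha,1}(i^{\alpha}\mu)$ plus continuity for bounded $\mu$, and part (ii) via the derivative identity and bounds on $E_{\alpha,\alpha}$, finishing with $t^{\alpha}-s^{\alpha}\leq(t-s)^{\alpha}$. The only cosmetic difference is that you use the single uniform bound $|E_{\alpha,\alpha}(i^{\alpha}\mu r^{\alpha})|\leq C\bigl(1+(\mu r^{\alpha})^{(1-\alpha)/\alpha}\bigr)$ where the paper splits into the regimes $r^{\alpha}\mu\leq K$ and $r^{\alpha}\mu>K$, and the boundary-ray validity you flag is already covered since the cited expansion is stated for $|\arg z|\leq\pi\alpha/2$.
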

		\begin{proof}
Let $\mu\rightarrow \infty$, then the following asymptotic expansion holds
			$$
			E_{\alpha, 1}(i^{\alpha} \mu)=\frac{1}{\alpha} \mathrm{e}^{i \mu^{\frac{1}{\alpha}}}-\sum_{k=1}^m \frac{(i^{\alpha} \mu)^{-k}}{\Gamma(1-k \alpha)}+O\left(| \mu|^{-m-1}\right),~~ \mu  \rightarrow \infty.
			$$
			We observe that $E_{\alpha, 1}(i^{\alpha} \mu)$ is an oscillatory function at disk with radius $\frac{1}{\alpha}$. This leads to the result that $|E_{\alpha, 1}(i^{\alpha} \mu)|= \frac{1}{\alpha}$ as $\mu \rightarrow \infty$. Since $|E_{\alpha, 1}(i^{\alpha} \mu)|$ is a continuous function, the conclusion of (i) follows. To prove (ii), we have
			$$
			|E_{\alpha,1}(i^{\alpha}t^{\alpha} \mu)-E_{\alpha,1}(i^{\alpha}s^{\alpha} \mu)|=|\int_{s}^{t}\mu r^{\alpha-1} E_{\alpha,\alpha}(i^{\alpha} r^{\alpha}\mu)dr|.
			$$
            Observe that
            $$
            E_{\alpha, \alpha}(i^{\alpha}r^{\alpha} \mu)=\frac{1}{\alpha} (i^{\alpha}r^{\alpha} \mu)^{(1-\alpha)/\alpha}\mathrm{e}^{ir \mu^{\frac{1}{\alpha}}}-\sum_{k=1}^m \frac{(i^{\alpha}r^{\alpha} \mu)^{-k}}{\Gamma(\alpha-k \alpha)}+O\left(|r^{\alpha} \mu|^{-m-1}\right),~~ r^{\alpha}\mu  \rightarrow \infty.
            $$
			Then for a sufficiently large $K>0$, as $r^{\alpha}\mu\in (0,K]$, by virtue of definition we have $|E_{\alpha,\alpha}(i^{\alpha} r^{\alpha}\mu)|\leq C$, which implies
			$$
			|E_{\alpha,1}(i^{\alpha}t^{\alpha} \mu)-E_{\alpha,1}(i^{\alpha}s^{\alpha} \mu)|\leq  C\mu (t^{\alpha}-s^{\alpha})\leq   C\mu(t-s)^{\alpha}.
			$$
			As $r^{\alpha}\mu\in (K,\infty)$, we have
            $$
            \begin{aligned}
                |E_{\alpha,1}(i^{\alpha}t^{\alpha} \mu)-E_{\alpha,1}(i^{\alpha}s^{\alpha} \mu)|&\leq2\int_{s}^{t}\mu r^{\alpha-1} \frac{1}{\alpha} (r^{\alpha} \mu)^{(1-\alpha)/\alpha}dr\\
                &\leq C\mu^{\frac{1}{\alpha}}(t-s).
            \end{aligned}
            $$
		   This  completes the proof. 
		\end{proof}
		\begin{Lemma}\label{Lemma5.0}
			Let $\partial^{\alpha}_t u \in L_2(0,\tau;H)$, $v \in C^{\infty}(0,\tau;H)$, then $\partial^{\alpha}_t ( uv)\in L_2(0,\tau;H)$ in the sense of distribution
			$$
			\partial^{\alpha}_t (v(t)u(t))=	v(t)\partial^{\alpha}_t u(t)+F(u(t),v(t)),
			$$
			where 
			$$
			F(u(t),v(t))=\int_{0}^{t}-\dot{k}(t-s)(v(t)-v(s))u(s)ds+k(t)(v(t)-v(0))u(0).
			$$
		\end{Lemma}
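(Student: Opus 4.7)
The plan is to work directly from the definition $\partial_t^\alpha (uv)(t) = \frac{d}{dt}\int_0^t k(t-s)[u(s)v(s) - u(0)v(0)]\,ds$ and to decompose the integrand so as to expose the factor $v(t)\partial_t^\alpha u$. The algebraic identity I would employ is
$$u(s)v(s) - u(0)v(0) = v(t)[u(s)-u(0)] + u(0)[v(t)-v(0)] - [v(t)-v(s)]\,u(s),$$
which freezes $v$ at time $t$ in the main piece and absorbs the defects into two explicit correction terms. Substituting produces the decomposition $I(t) = v(t)\,I_1(t) + u(0)[v(t)-v(0)]\,K(t) - J(t)$ with $I_1(t):=\int_0^t k(t-s)[u(s)-u(0)]\,ds$, $K(t):=\int_0^t k(r)\,dr$, and $J(t):=\int_0^t k(t-s)[v(t)-v(s)]u(s)\,ds$.

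Next I would differentiate each piece, interpreting the derivative in the distributional sense. The first piece gives $v'(t)I_1(t) + v(t)\partial_t^\alpha u(t)$ via the product rule and the definition of the Caputo derivative. The second gives $u(0)v'(t)K(t) + u(0)[v(t)-v(0)]k(t)$ since $K'=k$. For $J(t)$, Leibniz's rule with moving upper limit produces $v'(t)\int_0^t k(t-s)u(s)\,ds + \int_0^t \dot{k}(t-s)[v(t)-v(s)]u(s)\,ds$; the boundary contribution at $s=t$ vanishes because $|k(t-s)[v(t)-v(s)]| \leq C(t-s)^{1-\alpha}\to 0$ as $s\to t^-$. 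The key observation is then the cancellation of all $v'(t)$-contributions,
$$v'(t)\Bigl\{I_1(t) + u(0)K(t) - \int_0^t k(t-s)u(s)\,ds\Bigr\} = 0,$$
which follows immediately by unpacking the definition of $I_1$. This collapses the computation to the claimed identity $\partial_t^\alpha(uv)(t) = v(t)\partial_t^\alpha u(t) + F(u(t),v(t))$.

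For the $L_2$-membership, I would estimate the two terms of $F$ separately. Since $v$ and $v'$ are bounded on $[0,\tau]$, the boundary contribution satisfies $\|k(t)[v(t)-v(0)]u(0)\|_H \leq C t^{1-\alpha}\|u(0)\|_H$, which lies in $L_2(0,\tau;H)$ for any $\alpha\in(0,1)$. For the singular integral, the Lipschitz bound on $v$ gives $|\dot{k}(t-s)[v(t)-v(s)]| \leq C(t-s)^{-\alpha}$, so
$$\Big\|\int_0^t \dot{k}(t-s)[v(t)-v(s)]u(s)\,ds\Big\|_H \leq C\int_0^t (t-s)^{-\alpha}\|u(s)\|_H\,ds,$$
and since $r^{-\alpha}\in L_1(0,\tau)$ for $\alpha\in(0,1)$, Young's convolution inequality places this term in $L_2(0,\tau;H)$ provided $u\in L_2(0,\tau;H)$, which is granted by the standing assumptions of the section.

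The main obstacle is the legitimacy of the Leibniz differentiation of $J(t)$ through the singular kernel $\dot{k}(r)\sim r^{-1-\alpha}$ together with the vanishing of the boundary trace at $s=t$; both points are rescued by the factor $v(t)-v(s)=O(t-s)$ supplied by $v\in C^\infty$, which absorbs one full order of the singularity. A secondary subtlety is that the identity is asserted distributionally, but since every term on the right-hand side is a locally integrable function the equality in fact holds almost everywhere, which is precisely what makes the $L_2$-conclusion meaningful.
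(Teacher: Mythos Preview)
Your argument is correct and takes a genuinely different route from the paper. The paper's proof proceeds by density: it approximates $u$ by $u_n\in C_c^\infty(0,\tau;H)$ with $u_n\to u$ and $\partial_t^\alpha u_n\to\partial_t^\alpha u$ in $L_2$, uses the representation $\partial_t^\alpha u_n = k*u_n'$ for smooth functions, splits $v(t)\partial_t^\alpha u_n(t)$ as $\int_0^t k(t-s)(v(t)-v(s))u_n'(s)\,ds + \int_0^t k(t-s)v(s)u_n'(s)\,ds$, integrates by parts in $s$ in each piece, and then passes to the limit. You instead work directly from $\partial_t^\alpha(uv)=\frac{d}{dt}\bigl[k*(uv-u(0)v(0))\bigr]$, use the algebraic decomposition that freezes $v$ at time $t$, and differentiate in $t$ via Leibniz's rule. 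Your approach avoids the approximation machinery entirely and makes the cancellation of the $v'$-terms transparent; the price is that the singular differentiation of $J(t)$ must be justified by hand, which you correctly attribute to the gain $v(t)-v(s)=O(t-s)$ provided by $v\in C^\infty$. The paper's approach hides this same issue inside the smooth approximation. Your remark that $u\in L_2(0,\tau;H)$ is needed for the Young estimate is fine: it follows from $u(t)-u(0)=g_\alpha*\partial_t^\alpha u$ with $g_\alpha\in L_1$, together with $u(0)\in H$, both of which are implicit in the setting.
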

		\begin{proof}
			Since $\partial^{\alpha}_t u \in L_2(0,\tau;H)$, one can pick up a sequence  $u_n \in C_c^{\infty}(0,\tau;H)$ such that
			$$
			\|u -u_n \|_{L_2(0,\tau;H)}\rightarrow 0,~~ \text{ and }~~ \|\partial^{\alpha}_t u -\partial^{\alpha}_t u_n \|_{L_2(0,\tau;H)}\rightarrow 0,~~ \text{ as } n\rightarrow\infty.
			$$
			Observe that
			$$
			v(t)\partial^{\alpha}_t u_n(t)=\int_{0}^{t}k(t-s)(v(t)-v(s))\frac{du_n(s)}{ds}ds+\int_{0}^{t}k(t-s)v(s)\frac{du_n(s)}{ds}ds=:J_1+J_2,
			$$
			where
			$$
			\begin{aligned}
				J_1=&\int_{0}^{t}k(t-s)\frac{d(v(t)-v(s))u_n(s)}{ds}ds-\int_{0}^{t}k(t-s)u_n(s)\frac{d(v(t)-v(s))}{ds}ds=:J_{11}+J_{12}.
			\end{aligned}
			$$
			Therefore, the formula of integration by parts shows that
			$$\begin{aligned}
				J_{11} 
				=&\left[k(t-s)(v(t)-v(s))u_n(s)\right]^t_0-\int_{0}^{t}-\dot{k}(t-s)(v(t)-v(s))u_n(s)ds,
			\end{aligned}$$
			and  then
			$$
			J_{2}=\int_{0}^{t}k(t-s)\frac{du_n(s)v(s)}{ds}ds-\int_{0}^{t}k(t-s)u_n(s)\frac{dv(s)}{ds}ds=\partial^{\alpha}_t (v(t)u_n(t))-J_{12}.
			$$
			Putting above arguments together, we get
			$$
			\partial^{\alpha}_t (v(t)u_n(t))=	v(t)\partial^{\alpha}_t u_n(t)+F(u_n(t),v(t)).
			$$
			We thus obtain that 
			$$
			\begin{aligned}
				&\|\partial^{\alpha}_t (v u_n )-\partial^{\alpha}_t (v u )\|_{L_2(0,\tau;H)}+\|v \partial^{\alpha}_t u_n -v \partial^{\alpha}_t u \|_{L_2(0,\tau;H)}+\|F(u_n ,v )-F(u ,v )\|_{L_2(0,\tau;H)}
				\\ \rightarrow & 0,~~~ \text{ as } n\rightarrow 0.
			\end{aligned}
			$$
			This lemma is proved.
		\end{proof}
		We will now begin constructing our counterexample. Assume that $u $ is a solution to Cauchy problem (FP), it follows that
		$$\langle\partial_t^{\alpha}u(t), v\rangle_{V^{\prime}, V}+a(t, u(t), v)=(f(t), v)_H$$ for all $v \in V$ and $t \in(0, \tau]$.
		The solution determines the values of the form $a(t, \cdot, \cdot)$ over the product space $\{ u\} \times V$ as
		\begin{equation}\label{EQ5.1}
			a(t, c \cdot u(t), v)=c\left[(f(t) , v)_H-\langle\partial_t^{\alpha}u(t), v\rangle_{V^{\prime},V}\right]
		\end{equation}
		for all $c \in \mathbb{C}$ and $v \in V$. Recall that $u\in L_2(0,\tau;V)$ with $\partial^{\alpha}_tu \in L_2(0,\tau;V^{\prime})$ solving (FP) has maximal $L_2$-regularity if and only if $\partial^{\alpha}_tu \in L_2(0,\tau;H)$.  Following the methodological framework proposed by Fackler \cite{Fackler2}, we construct a counterexample for problem (FP) using the following two key steps. First, we deliberately select a badly behaved function $u$ that exhibits undesirable regularity properties. Subsequently, we systematically derive an operator structure $A(t)$ that satisfies both the evolution equation (\ref{EQ5.1}) and the foundational assumptions (\ref{A0}).  
		
		\subsection{Badly behaved function and extending forms}
		
		We select $H=L_2([0,1])$ and $V=L_2([0,1] ; w d \lambda)$, where $w:[0,1] \rightarrow \mathbb{R}_{\geq 1}$ is a measurable locally bounded weight. Then $V^{\prime}=L_2\left([0,1] ; w^{-1} d \lambda\right)$ and we obtain the Gelfand triple structure $V \hookrightarrow H \hookrightarrow$ $V^{\prime}$ via standard embeddings.
        
		In the sequel, we can choose the badly behaved function $u(t)=c(x) E_{\alpha,1} (i^{\alpha}t^{\alpha} \varphi(x))$ with
        \[
		w(x)=x^{-a},\quad \varphi(x)=x^{-b},\quad c(x)=x^c, \quad \text{for } a,b,c\geq0,
		\]
        where these functions are required to satisfy $\partial_t^{\alpha}u\notin L_2(0,\tau ; H) $, $u \in L_2(0, \tau ; V)$ and $\partial_t^{\alpha}u  \in L_2(0, \tau ; V^{\prime})$, i.e.,
\begin{equation}\label{EQB}\tag{B}
			\begin{aligned}	\|\partial_t^{\alpha} u(t)\|_H^2=& \int_0^1|i^{\alpha}\varphi(x)c(x)E_{\alpha,1}(i^{\alpha}t^{\alpha}\varphi(x))|^2 d x\\
				= & m_1(t)\int_0^1|\varphi(x) c(x)|^2 d x=m_1(t)\|\varphi c\|_H^2=\infty,
			\end{aligned}
		\end{equation}
and
\begin{equation}\label{EQC}\tag{C}
			\begin{aligned}
				\|u(t)\|^2_V & =m_2(t) \int_0^1|c(x)|^2 w(x) d x=m_2(t)\|c\|_V^2<\infty, \\
				\|\partial_t^{\alpha}u(t)|_{V'}^2& =m_3(t) \int_0^1|\varphi(x) c(x)|^2 w^{-1}(x) d x=m_3(t)\|\varphi c\|_{V^{\prime}}^2<\infty ,
			\end{aligned}
		\end{equation}
  where $m_j(t)>0$, $j=1,2,3$ are bounded positive functions by virtue of the asymptotic expansion of $|E_{\alpha,1}(i^{\alpha}t^{\alpha}\varphi(x))|$.
		
		Now consider the associated form (\ref{EQ5.1}). For $a(t,\cdot,\cdot)$ to be uniformly coercive, it is required that 
		$$
		\begin{aligned}
			\operatorname{Re} a(t, u(t), u(t)) =\operatorname{Re}(f(t) , u(t))_H-\operatorname{Re}\langle\partial_t^{\alpha}u(t), u(t)\rangle_{V^{\prime}, V} \geq \gamma\|u(t)\|_V^2,
		\end{aligned}
		$$
		for all $t \in(0, \tau]$. Observe that, from the denseness embedding $H\rightarrow V^{\prime}$, assuming that 
		\begin{equation}\label{EQD}\tag{D}
			\int_{0}^{1}\varphi(x)|c(x)|^2dx=\|\varphi(x)^{1/2}c(x)\|^2_H<\infty,
		\end{equation}
		we have
		$$
		\begin{aligned}
			\operatorname{Re}\langle\partial_t^{\alpha}u(t), i^{\alpha-1}u(t)\rangle_{V^{\prime}, V}=&\operatorname{Re}\int_{0}^{1}i^{1-\alpha}\partial_t^{\alpha}u(t,x)\overline{u(t,x)}dx\\ =&\operatorname{Re}\int_{0}^{1}i\varphi(x)|c(x)|^2|E_{\alpha,1}(i^{\alpha}t^{\alpha}\varphi(x))|^2dx=0.
		\end{aligned}
		$$ Hence we choose $f(t)=i^{\alpha-1}u(t)$ for simplicity and then 
		$$
		\|u(t)\|_H^2=\int_0^1|u(t, x)|^2 d x=m_4(t)\int_0^1|c(x)|^2 d x=m_4(t)\|c\|_H^2,
		$$
		for some $m_4(t)>0$ and
		\begin{equation}\label{EQ5.2}
			\begin{aligned}
				\operatorname{Re} a(t, u(t), u(t)) 
				= \|u(t)\|_H^2\geq \gamma \|u(t)\|_V^2,
			\end{aligned}
		\end{equation}
		with $\gamma\leq\|u(t)\|_H^2/\|u(t)\|_V^2=(m_4(t)\|c\|_H^2)/(m_2(t)\|c\|_V^2)$ for all $t\in [0,\tau]$.
		Moreover, the uniform boundedness of (\ref{EQ5.1}) on \( \{ u \} \times V \) is a consequence of \( u \in L_{\infty}(0, \tau; H) \) and \( \partial_t^{\alpha} u \in L_{\infty}(0, \tau; V^{\prime}) \). 
        
  \begin{remark}
    To contextualize Fackler's approach \cite{Fackler2}, we observe that for problem (P), the construction of counterexamples hinges on employing a badly behaved function \( u(t) = c(x) \exp(it\varphi(x)) \), specifically designed to violate regularity assumptions while satisfying structural conditions akin to (\ref{EQB}) and (\ref{EQC}). However, the exponential ansatz becomes inadequate when addressing problem (FP), due to the intrinsic non-locality and memory effects inherent in fractional differential operators.
To overcome this limitation, we introduce a fractional generalization of the exponential function by adopting the Mittag-Leffler function. This special function preserves the oscillatory character of \(  \exp(it\varphi(x)) \) while accounting for the fractional order $\alpha$, thereby enabling the construction of counterexamples that expose analogous irregularities in the fractional setting. The choice of  \( E_{\alpha,1}(i^{\alpha}t^{\alpha}\varphi(x)) \)
  as the kernel function ensures compatibility with the solution structure of fractional evolution equations, making it a natural candidate for extending Fackler's methodology to the fractional framework.
        \end{remark} 
        To construct a counterexample, it is necessary to extend these sesquilinear forms to \( V \times V \).
		

		Let $b(t;u,v)$ be the form defined by \eqref{EQ5.1}. Our aim is to extend  $b(t;u,v):\{ u\} \times V\rightarrow \mathbb{C}$ to the form $a(t;u,v):V \times V\rightarrow \mathbb{C}$.
		For a fixed $t$, consider the form $b(t;\cdot,\cdot)-\frac{\gamma}{2}(\cdot,\cdot)_V$, the associated operator is $T_t:U\rightarrow V$. Then from \eqref{EQ5.2} we have $$\operatorname{Re}(T_tu,u)=\operatorname{Re}b(t;u,u)-\operatorname{Re}\frac{\gamma}{2}(u,u)_V>0.$$ To keep the coercivity invariant, we need search extensions $\hat{T}: V \rightarrow V$ with the numerical range
		$$
		W(T):=\left\{(T u, u)_V: u \in U,\|u\|_V=1\right\}\subset \{z \in \mathbb{C}: \operatorname{Re} z \geq 0\}  .
		$$
		We introduce the following three lemmas about this extension operator. The proofs can be found in \cite{Fackler2}, but we present here for completeness.
		\begin{Lemma}\label{Lemma5.2}
			Consider $U$ be a closed subspace of the Hilbert space $V$, and let $T: U \rightarrow U$ be a bounded linear operator with $W(T) \subset$ $\{z \in \mathbb{C}: \operatorname{Re} z \geq 0\}$. Then there exists a bounded extension $\hat{T}: V \rightarrow V$ preserving the operator norm $\|\hat{T}\|=\|T\|$ while maintaining the numerical range condition $W(\hat{T}) \subset\{z \in \mathbb{C}: \operatorname{Re} z \geq 0\}$.
		\end{Lemma}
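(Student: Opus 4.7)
The plan is to give an explicit extension via orthogonal projection. Let $P\colon V\to U$ denote the orthogonal projection onto the closed subspace $U$, and set
\[
\hat{T} := T\circ P\colon V\longrightarrow V,
\]
viewing $U\hookrightarrow V$ so that the range of $TP$ lies in $V$. Since $P|_U=I_U$, we immediately have $\hat{T}|_U=T$, so $\hat T$ is indeed an extension.

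For the norm, first $\|\hat{T}v\|_V=\|TPv\|_V\leq\|T\|\,\|Pv\|_V\leq\|T\|\,\|v\|_V$, where the last inequality uses that $P$ is an orthogonal projection and hence a contraction. Therefore $\|\hat T\|\leq\|T\|$, and the reverse inequality follows because $\hat T$ restricts to $T$ on $U$. Thus $\|\hat{T}\|=\|T\|$.

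For the numerical range, take $v\in V$ with $\|v\|_V=1$ and write $v=Pv+(I-P)v$ with $Pv\in U$ and $(I-P)v\perp U$. Since $TPv\in U$, orthogonality gives $(TPv,(I-P)v)_V=0$, and consequently
\[
(\hat{T}v,v)_V = (TPv,Pv)_V.
\]
If $Pv=0$ this is $0$, which has non-negative real part. Otherwise, setting $\tilde u := Pv/\|Pv\|_V\in U$ with $\|\tilde u\|_V=1$, we obtain
\[
(\hat{T}v,v)_V = \|Pv\|_V^{2}\,(T\tilde u,\tilde u)_V \in \|Pv\|_V^{2}\,W(T).
\]
Because $W(T)\subset\{z\in\mathbb{C}:\operatorname{Re} z\geq 0\}$ and $\|Pv\|_V^{2}\geq 0$, we conclude $\operatorname{Re}(\hat{T}v,v)_V\geq 0$, i.e.\ $W(\hat T)\subset\{z\in\mathbb{C}:\operatorname{Re} z\geq 0\}$.

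There is no real obstacle here; the only point worth flagging is that the argument relies crucially on $P$ being the \emph{orthogonal} projection (so that $\|P\|=1$ and $(I-P)v$ is orthogonal to $TPv\in U$). A non-orthogonal projection would, in general, neither preserve the operator norm nor force the orthogonality needed in the numerical-range computation, so the Hilbert-space structure of $V$ is used in an essential way.
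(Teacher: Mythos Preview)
Your proof is correct and is essentially the same as the paper's: the paper defines the extension by $\hat T(u+u^\perp)=Tu$ on $U\oplus U^\perp$, which is exactly your $\hat T=T\circ P$, and then asserts that $\|\hat T\|=\|T\|$ and that $W(\hat T)$ is the convex hull of $W(T)$ and $\{0\}$. Your write-up is in fact more detailed than the paper's, spelling out the norm equality and the numerical-range computation via orthogonality.
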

		\begin{proof}
			Using the orthogonal complement \( U^\perp \) of \( U \) in \( V \), we define the trivial extension
			$$
			\hat{T}: U \oplus U^\perp \ni u + u^\perp \mapsto T u.
			$$
			Observe that this definition ensures: $\|\hat{T}\| = \|T\|$, as \( \hat{T} \) acts as \( T \) on \( U \) and is zero on \( U^\perp \). Furthermore, the numerical range \( W(\hat{T}) \) is the convex hull of \( W(T) \) and \( \{0\} \). Since
			$
			W(T) \subseteq \{z \in \mathbb{C} \mid \operatorname{Re} z \geq 0\},
			$
			it follows that
			$
			W(\hat{T}) \subseteq \{z \in \mathbb{C} \mid \operatorname{Re} z \geq 0\}.
			$
		\end{proof}
		\begin{Lemma}\label{Lemma5.3}
			Consider a one-dimensional subspace $U\subset V$, and let $T: U \rightarrow V$ be a bounded linear operator with $W(T) \subset\{z \in \mathbb{C}: \operatorname{Re} z \geq 0\}$. Then there exists a bounded extension $\hat{T}: V \rightarrow V$ satisfying $\|\hat{T}\| \leq \sqrt{2}\|T\|$ and $W(\hat{T}) \subset$ $\{z \in \mathbb{C}: \operatorname{Re} z \geq 0\}$.
		\end{Lemma}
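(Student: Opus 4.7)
My plan is to produce an explicit rank-two extension of $T$ that adds a carefully chosen correction to the canonical norm-preserving rank-one extension $v \mapsto (v, u_0)_V v_0$. Since $\dim U = 1$, fix a unit vector $u_0 \in U$, set $v_0 := T u_0$, so $\|T\| = \|v_0\|_V$, and let $\alpha := (v_0, u_0)_V$. The hypothesis $W(T) \subset \{\operatorname{Re} z \geq 0\}$ collapses to the single scalar inequality $\operatorname{Re}\alpha \geq 0$, since every unit vector of $U$ is of the form $\zeta u_0$ with $|\zeta|=1$ and $(T\zeta u_0,\zeta u_0)_V = \alpha$. Orthogonally decompose $v_0 = \alpha u_0 + w$ with $w \in U^\perp$, so $\|v_0\|_V^2 = |\alpha|^2 + \|w\|_V^2$.

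I would then define
\[
\hat T v := (v, u_0)_V\, v_0 \;-\; \bigl(v - (v, u_0)_V u_0,\, w\bigr)_V\, u_0, \qquad v \in V,
\]
and verify three things in order: (i) $\hat T u_0 = v_0$, which is immediate from $(u_0, u_0)_V = 1$ and $u_0 \perp w$, so $\hat T$ genuinely extends $T$; (ii) the numerical range condition $W(\hat T) \subset \{\operatorname{Re} z \geq 0\}$; (iii) the norm bound $\|\hat T\|\leq \sqrt 2\,\|T\|$.

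For (ii), writing $v = a u_0 + v^\perp$ with $v^\perp \in U^\perp$, expanding $(\hat T v, v)_V$ produces the term $|a|^2 \alpha$ together with a cross contribution of the form $a(w, v^\perp)_V - \overline{a(w, v^\perp)_V}$, which is purely imaginary. Hence $\operatorname{Re}(\hat T v, v)_V = |a|^2 \operatorname{Re}\alpha \geq 0$, and this cancellation is precisely what the correction term $-(v^\perp, w)_V\, u_0$ was designed to engineer. For (iii), since $u_0 \perp w$, the expression $\hat T v = \bigl(a\alpha - (v^\perp, w)_V\bigr) u_0 + a w$ is an orthogonal decomposition in $V$, so
\[
\|\hat T v\|_V^2 = \bigl|a\alpha - (v^\perp, w)_V\bigr|^2 + |a|^2 \|w\|_V^2,
\]
and applying $(p+q)^2 \leq 2(p^2+q^2)$ together with Cauchy--Schwarz and grouping terms gives $\|\hat T v\|_V^2 \leq 2 \|v_0\|_V^2 \|v\|_V^2$; the factor $2$ here, rather than $1$, is precisely what forces the $\sqrt 2$ loss stated in the lemma.

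The only conceptual obstacle is discovering the right correction term. Once one recognizes that the problematic cross term $a(w, v^\perp)_V$ in the quadratic form can be cancelled by a rank-one modification whose contribution to $(\hat T v, v)_V$ is the complex conjugate of the cross term with the opposite sign, the two pieces combine into $z - \bar z \in i\mathbb{R}$ and drop out of the real part. After that observation everything is mechanical: the extension property is immediate, the numerical range condition is free regardless of whether $\operatorname{Re}\alpha$ vanishes, and the norm estimate follows from orthogonality plus a single elementary inequality.
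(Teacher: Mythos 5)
Your proposal is correct and is essentially the paper's own construction written in closed form: your correction term $-\bigl(v-(v,u_0)_V u_0,\,w\bigr)_V u_0$ is exactly the paper's choice $(Se_2,e_1)_V=-\overline{(Te_1,e_2)_V}$, $(Se_2,e_2)_V=0$ in the orthonormal basis $e_1=u_0$, $e_2=w/\|w\|_V$ of $\operatorname{span}\{u_0,Tu_0\}$, extended by zero on the orthogonal complement, and your cancellation $z-\bar z\in i\mathbb{R}$ and the $\sqrt2$ norm bound are the same computations. The only cosmetic difference is that your single formula handles the degenerate case $Tu_0\parallel u_0$ (i.e.\ $w=0$) uniformly, where the paper splits into the cases $\dim W=1$ and $\dim W=2$ and invokes Lemma \ref{Lemma5.2}.
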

		\begin{proof}
			Let $W=span\{U,TU\}$. If $\dim W=1$, the result directly follows from Lemma \ref{Lemma5.2}. When $\dim W=2$, we construct an orthonormal system $\left(e_1, e_2\right)$ of $W$ with $e_1 \in U$. Define a linear extension $S$ : $W \rightarrow W$ of $T$ satisfying $\left(S e_2 , e_1\right)=-\overline{\left(T e_1 , e_2\right)}$ and $\left(S e_2 , e_2\right)=0$. Then for $w=\lambda_1 e_1+\lambda_2 e_2 \in W$,
			we have
			$$
			(S w , w)=\left|\lambda_1\right|^2\left(T e_1 , e_1\right)+\left|\lambda_2\right|^2\left(S e_2 , e_2\right)+\lambda_1 \overline{\lambda_2}\left(T e_1 , e_2\right)+\overline{\lambda_1} \lambda_2\left(S e_2 , e_1\right),
			$$
			and $\operatorname{Re}(S w , w)=\left|\lambda_1\right|^2 \operatorname{Re}\left(T e_1 , e_1\right),$ 
			which implies $W(\hat{T}) \subset$ $\{z \in \mathbb{C}: \operatorname{Re} z \geq 0\}$ and 
			$$
			\|S w\| \leq\left(\left|\lambda_1\right|+\left|\lambda_2\right|\right)\|T\| \leq \sqrt{2}\|T\|\|w\| .
			$$
			Then by applying Lemma \ref{Lemma5.2} the extension $\hat{T}:V\rightarrow V$  to  $S $ follows.
		\end{proof} 
		Applying Lemma \ref{Lemma5.3} to the operator $T_t$ associated with $b(t;\cdot,\cdot)-\frac{\gamma}{2}(\cdot,\cdot)_V$, yields the extension $\hat{T}:V\rightarrow V$ with the extension form $a(t;\cdot,\cdot)$.
		\begin{Lemma}\label{Lemma5.4}
			Consider a one-dimensional subspace $U\subset V$ and sesquilinear form $b: U \times V \rightarrow \mathbb{C}$ that admits positive constants $\gamma ,M$ such that for all $u \in U$ and $v \in V$:
			$$
			|b(u, v)| \leq M\|u\|_V\|v\|_V, \quad \operatorname{Re} b(u, u) \geq \gamma\|u\|_V^2 .
			$$
			Then there exists an extension of $b$ to a sesquilinear form $a: V \times V \rightarrow \mathbb{C}$, where $a$ satisfies
			$$
			\begin{aligned}
				|a(u, v)|   \leq\left[\sqrt{2}\left(M+\frac{\gamma}{2}+2 \gamma^{-1}\left(M+\frac{\gamma}{2}\right)^2\right)+\frac{\gamma}{2}\right]\|u\|_V\|v\|_V,~~ 
				\operatorname{Re} a(u, u)   \geq \frac{\gamma}{2}\|u\|_V^2,
			\end{aligned}
			$$
			for all $u, v \in V$.
		\end{Lemma}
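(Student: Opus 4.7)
The plan is to reduce Lemma \ref{Lemma5.4} to Lemma \ref{Lemma5.3} by a coercivity shift. First I would introduce the shifted sesquilinear form
$$
b_0(u,v) := b(u,v) - \tfrac{\gamma}{2}(u,v)_V, \qquad u \in U,\ v \in V,
$$
which satisfies $|b_0(u,v)| \leq (M + \gamma/2)\|u\|_V \|v\|_V$ and, crucially, $\operatorname{Re} b_0(u,u) \geq (\gamma/2)\|u\|_V^2 \geq 0$. This shift is designed precisely so that the associated operator falls under the hypotheses of Lemma \ref{Lemma5.3}.

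Next, I would use the Riesz representation theorem to associate a bounded operator $T: U \to V$ with $b_0$ via $(Tu, v)_V = b_0(u,v)$ for all $u \in U$, $v \in V$. The bound on $b_0$ yields $\|T\|_{U \to V} \leq M + \gamma/2$, and the coercivity of $b_0$ translates into the numerical range condition $W(T) \subset \{z \in \mathbb{C} : \operatorname{Re} z \geq 0\}$. Applying Lemma \ref{Lemma5.3} then produces an extension $\hat{T}: V \to V$ with $\|\hat{T}\|_{L(V)} \leq \sqrt{2}\|T\|$ and $W(\hat{T}) \subset \{\operatorname{Re} z \geq 0\}$.

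Finally, I would define the extended form by undoing the shift:
$$
a(u,v) := (\hat{T}u, v)_V + \tfrac{\gamma}{2}(u,v)_V, \qquad u, v \in V.
$$
By construction, $a$ is sesquilinear on $V \times V$ and restricts to $b$ on $U \times V$ (since $\hat{T}|_U = T$ and $(Tu,v)_V = b_0(u,v)$). The coercivity estimate is immediate from $W(\hat{T}) \subset \{\operatorname{Re} z \geq 0\}$:
$$
\operatorname{Re} a(u,u) = \operatorname{Re}(\hat{T}u,u)_V + \tfrac{\gamma}{2}\|u\|_V^2 \geq \tfrac{\gamma}{2}\|u\|_V^2.
$$
For boundedness, the triangle inequality together with the norm estimate from Lemma \ref{Lemma5.3} yields a control on $|a(u,v)|$ in terms of $\|\hat{T}\| + \gamma/2$; tracking the constants through the two-dimensional extension construction in the proof of Lemma \ref{Lemma5.3} produces the stated bound $\sqrt{2}(M + \gamma/2 + 2\gamma^{-1}(M + \gamma/2)^2) + \gamma/2$.

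The main obstacle is bookkeeping of the constants: while the qualitative structure (shift, extend, un-shift) is straightforward, the auxiliary term $2\gamma^{-1}(M+\gamma/2)^2$ in the final norm estimate indicates that the extension step must account for interactions between the $U$-component and the $V$-component of $\hat{T}$ beyond the naive $\sqrt{2}\|T\|$ bound, in order to keep $W(\hat{T})$ inside the right half-plane while controlling the full operator norm. Making this precise requires carefully revisiting the orthonormal decomposition $(e_1,e_2)$ in the proof of Lemma \ref{Lemma5.3} and exploiting the lower bound $\operatorname{Re}(Te_1,e_1)_V \geq \gamma/2$ to dominate the off-diagonal coupling $|(Te_1,e_2)_V|$, which is where the inverse power of $\gamma$ enters.
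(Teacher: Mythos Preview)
Your proposal is correct and follows exactly the paper's approach: shift the form by $\tfrac{\gamma}{2}(\cdot,\cdot)_V$, apply Lemma~\ref{Lemma5.3} to the associated operator $T:U\to V$, then un-shift to define $a$. Your concern about the constant is harmless: the naive estimate $\|\hat T\|\le\sqrt{2}\|T\|\le\sqrt{2}(M+\gamma/2)$ already yields $|a(u,v)|\le[\sqrt{2}(M+\gamma/2)+\gamma/2]\|u\|_V\|v\|_V$, which is stronger than the stated bound, so no additional off-diagonal analysis is needed.
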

		For our specific function $u(t, x)=c(x) E_{\alpha,1}( i^\alpha t^\alpha \varphi(x))$ and $f(t)=i^{\alpha-1}u(t)$ we obtain
		$$
		\begin{aligned}
			b(t, u(t), v) & =(f(t) , v)_H-\langle\partial_t^{\alpha}u(t), v\rangle_{V^{\prime}, V}=\int_0^1 i^{\alpha-1}u(t, x) \overline{v(x)} d x-\int_0^1 \partial_t^{\alpha}u(t, x) \overline{v(x)} d x \\
			& =\left(w^{-1}(i^{\alpha-1}u(t)-\partial_t^{\alpha}u(t)) , v\right)_V .
		\end{aligned}
		$$
		To extend this form to $V\times V$, we follow the step in the proof of Lemma \ref{Lemma5.2} and Lemma \ref{Lemma5.3}. Let $e_1=u(t) /\|u(t)\|_V$ denote the normalized basis vector in a Hilbert space $V$ and consider the operator $T_t$ associated to $b(t, \cdot, \cdot)-\frac{\gamma}{2}(\cdot,\cdot)_V$, then 
		$$
		T_t e_1=\frac{i^{\alpha-1}u(t)-\partial_t^{\alpha}u(t)}{w\|u(t)\|_V}-\frac{\gamma}{2} \frac{u(t)}{\|u(t)\|_V}.
		$$   
		For the orthogonal projection of $T_t u(t)$ onto the complement of $e_1$ is $T_t u(t)-\left(T_t u(t), e_1\right)_V e_1$, which is equal to
		$$
		\begin{aligned}
			& T_t u(t)-\left[-\frac{\gamma}{2}+\frac{1}{\|u(t)\|_V^2}\left(\|u(t)\|_H^2-\langle\partial_t^{\alpha}u(t), u(t)\rangle_{V^{\prime},V}\right)\right] u(t) \\
			&=w^{-1}\partial_t^{\alpha}u(t)+(w^{-1}-\frac{1}{\|u(t)\|_V^2}\left(\|u(t)\|_H^2-\langle\partial_t^{\alpha}u(t), u(t)\rangle_{V^{\prime},V}\right)) u(t) =: k(t).
		\end{aligned}
		$$
		Since $|E_{\alpha,1}(i^\alpha t^\alpha \varphi(x))|^2>0$ almost everywhere by the analyticity of function $E_{\alpha,1}(z)$, this implies $\|u(t)\|_V,\|k(t)\|_V>0$.  Then there are normalization factors $n_1=\|u(t)\|_V$ and $n_2=\|k(t)\|_V$ associated with $u(t)$ and $k(t)$ in $V$. Therefore, the orthogonal projection $P_t: V \rightarrow \operatorname{span}\left\{u(t), T_t u(t)\right\}$ is represented by
		\begin{equation}\label{EQ5.3}
			P_t v=\frac{1}{\|u(t)\|_V^2}(v , u(t))_V u(t)+\frac{1}{\|k(t)\|_V^2}(v , k(t))_V k(t) .
		\end{equation}
		One then has for $y_1, y_2 \in V$,
		\begin{equation}\label{EQ5.4}
			\begin{aligned}
				\left(S_t P_t y_1 , P_t y_2\right)_V= & \frac{1}{\|u(t)\|_V^4}\left(y_1 , u(t)\right)_V \overline{\left(y_2 , u(t)\right)_V}\left(T_t u(t) , u(t)\right)_V \\
				& +\frac{1}{\|u(t)\|_V^2\|k(t)\|_V^2}\left(y_1 , u(t)\right)_V \overline{\left(y_2 , k(t)\right)_V}\left(T_t u(t) , k(t)\right)_V \\
				& -\frac{1}{\|u(t)\|_V^2\|k(t)\|_V^2}\left(y_1 , k(t)\right)_V \overline{\left(y_2 , u(t)\right)_V\left(T_t u(t) , k(t)\right)_V}.
			\end{aligned}
		\end{equation}
		Combining all the components, the extended forms in Lemma \ref{Lemma5.4} are expressed as follows 
		$$
		a\left(t, y_1, y_2\right)=(\hat{T}_t y_1 , y_2)_V+\frac{\gamma}{2}\left(y_1 , y_2\right)_V=\left(S_t P_t y_1 , P_t y_2\right)_V+\frac{\gamma}{2}\left(y_1 , y_2\right)_V.
		$$
		\subsection{\texorpdfstring{The H\"older exponent of the form}{The Holder exponent of the form}}
		
		In the following we focus on the H\"{o}lder regularity of the extending forms.  Note that,  multiplying two bounded Hölder continuous functions pointwise retains H\"{o}lder continuity with the original exponent, in particular,  an analogous result holds in the inverse of a positive H\"{o}lder continuous function as well as its norms.
		Considering this, we observe from the detailed forms in (\ref{EQ5.3}) and (\ref{EQ5.4}) that the form $a$ is $\beta$-H\"{o}lder continuous when the functions $u:(0, \tau] \rightarrow V,~ w^{-1} u:(0, \tau] \rightarrow V$ and $\partial_t^{\alpha}u:(0, \tau] \rightarrow V^{\prime}$ all exhibit $\beta$-H\"{o}lder continuity‌. Next, we address these functions. For the first function, we have
		$$
		\begin{aligned}
			\|u(t)-u(s)\|_V^2 & =\int_0^1 w(x)|u(t, x)-u(s, x)|^2 d x \\
			& =\int_0^1 w(x)|c(x)|^2|E_{\alpha,1}(i^\alpha t^\alpha  \varphi(x))-E_{\alpha,1}(i^\alpha s^\alpha  \varphi(x))|^2 d x.
		\end{aligned}
		$$
		We now aim to maximize the H\"{o}lder exponent in the preceding expression while ensuring that conditions (\ref{EQB}), (\ref{EQC}), and (\ref{EQD}) remain valid. Within the family of functions defined by
		\[
		w(x)=x^{-a},\quad \varphi(x)=x^{-b},\quad c(x)=x^c, \quad \text{for } a,b,c\geq0,
		\]
		the optimal H\"{o}lder exponent is obtained by selecting \( a=b=\frac{3}{2} \) and \( c=1 \). It is straightforward to verify that this choice satisfies conditions  (\ref{EQB}), (\ref{EQC}) and (\ref{EQD}) . Moreover,  Lemma \ref{Lemma5.1} implies that
		 $$|E_{\alpha,1}(i^{\alpha}t^{\alpha} \varphi(x))-E_{\alpha,1}(i^{\alpha}s^{\alpha} \varphi(x))| \leq C(|t-s|^{\alpha} \varphi(x)+C|t-s|\varphi(x)^{\frac{1}{\alpha}}) ,$$
		   it follows that for $|t-s| \leq 1$,
		$$
		\begin{aligned}
			\|u(t)-u(s)\|_V^2  \leq& 4M \int_0^{|t-s|^{2 \alpha / 3}} x^{1 / 2} d x+C^2|t-s|^{2\alpha} \int_{|t-s|^{2\alpha / 3}}^1 x^{1 / 2} x^{-3} d x \\&+C^2|t-s|^2 \int_{|t-s|^{2\alpha / 3}}^1 x^{1 / 2} x^{-\frac{3}{\alpha}} d x\\
			 \leq& C|t-s|^{\alpha}+C|t-s|^{2\alpha}|t-s|^{-\alpha}+C|t-s|^{2}|t-s|^{\alpha-2}\\=&C|t-s|^{\alpha}.
		\end{aligned}
		$$
		For $|t-s| \geq 1$ the same H\"{o}lder estimate hold trivially. This implies that $u:(0, \tau] \rightarrow V$ is $\frac{\alpha}{2}$-H\"{o}lder continuous. Moreover, since $\left\|w^{-1}(u(t)-u(s))\right\|_V \leq\|u(t)-u(s)\|_V$, and  $\partial_t^{\alpha}u$ has the same expression as for $u$,  hence these two functions have the same H\"{o}lder exponent.
		
		We next give a proof of the Theorem \ref{T2}.
		\begin{proof}[Proof of Theorem \ref{T2}]
			Let $u(t, x) = c(x) E_{\alpha,1}(i^\alpha t^\alpha  \varphi(x))$. Let $\eta : [0, \tau] \to \mathbb{R}$  be a a smooth cut-off function satisfying $0 \leq \eta \leq 1$, $\eta(0) = 0$, and $\eta(t) = 1$ for all $t \geq \frac{\tau}{2}$. Now set $w = \eta u$, it follows that $w$ satisfies $w(0) = 0$, $\partial^{\alpha}_t w  \in L_2(0, \tau ; V)$, and $w \in L_2(0,\tau ; V)$, the Lemma \ref{Lemma5.0} shows the following identity
			$$\begin{aligned}
				\partial^{\alpha}_t w(t) + \mathcal{A}(t) w(t) &= \eta(t) \partial^{\alpha}_t u(t) + \eta(t) \mathcal{A}(t) u(t) + \partial^{\alpha}_t \eta(t) u(t) + F(u(t), \eta(t)) \\
				&= \eta(t)i^{\alpha-1} u(t) + \partial^{\alpha}_t \eta(t) u(t) + F(u(t), \eta(t)),
			\end{aligned}
			$$
			where $\mathcal{A}(t)$ is the associated operator of the extended form. 	It is clear that the right-hand side belongs to $L_2(0, \tau; H)$ since $u$ does. However,  $\partial^{\alpha}_t w  = \partial^{\alpha}_t u $ on $[\frac{\tau}{2}, \tau]$, this means $\partial^{\alpha}_t w \notin L_2(0, \tau; H)$. Hence, $w \notin MR^{\alpha}_2((0, \tau])$. The proof is completed.
		\end{proof}

		\section*{Conflict of interest} 
		The authors declare that they have no conflict of interest.

		\section*{Acknowledgements}
		This work is supported by the National Natural Science Foundation of China (Nos. 12101142, 12471172). 

	\end{document}